\numberwithin{equation}{section}
\def\e{\epsilon}
\newcommand{\tcb}[1]{\textcolor{blue}{#1}}
\newtheorem{theorem}{Theorem}[section]
\newtheorem*{theorem*}{Theorem}
\newtheorem{lemma}{Lemma}[section]
\newtheorem{definition}{Definition}[section]
\newtheorem{remark}{Remark}[section]
\newtheorem{proposition}{Proposition}[section]
\numberwithin{equation}{section}
\begin{document}

\title{Numerical stability of Gr\"unwald-Letnikov method for time fractional delay differential equations}

\author[1]{Lei Li\thanks{E-mail: leili2010@sjtu.edu.cn}}
\author[2]{Dongling Wang\thanks{E-mail: wdymath@nwu.edu.cn}}
\affil[1]{School of Mathematical Sciences, Institute of Natural Sciences, MOE-LSC, Shanghai Jiao Tong University, Shanghai, 200240, P. R. China.}
\affil[2]{Department of Mathematics and Center for Nonlinear Studies, Northwest University, Xi'an, Shaanxi, 710127, P. R. China.}
%\date{}
\maketitle

\begin{abstract}
This paper is concerned with the numerical stability of time fractional delay differential equations (F-DDEs) based on Gr\"{u}nwald-Letnikov (GL) approximation (also called fraction backward Euler scheme) for the Caputo fractional derivative, in particular, the numerical stability region and the Mittag-Leffler stability. Using the boundary locus technique, we first derive the exact expression of the numerically stability region in the parameter plane, and show that the fractional backward Euler scheme based on GL scheme is not $\tau(0)$-stable, which is different from the backward Euler scheme for integer DDE models. Secondly, we also prove the numerical Mittag-Leffler stability for the numerical solutions provided that the parameters fall into the numerical stability region, by employing the singularity analysis of generating function. Our results show that the numerical solutions of F-DDEs are completely different from the classical integer order DDEs, both in terms of $\tau(0)$-stabililty and the long-time decay rate. 
%Numerical examples are given to confirm the theoretical results.

\end{abstract}

%\begin{keyword}
Key Words: Fractional DDEs, numerical stability, boundary locus technique, singularity analysis, generating function.
%\end{keyword}

\section{Introduction}\label{Introd}

%\cite{vcermak2020exact, vcermak2017fractional, vcermak2016stability, cermak2015explicit, tuan2020qualitative}
%
%\cite{wang2020asymptotic, wang2015dissipativity, wang2019dissipativity, wang2020ML}
%
%\cite{li2020long, cong2017existence, siegmund2020stability, garrappa2020initial, zhou2020implicit, garrappa2010linear, diethelm2002predictor}
%
%\cite{LiWang2019, brunner2017volterra, lubich1986stability}
%
%\cite{zhao180generalized,zhang2019asymptotic,huang2009delay,guglielmi1998delay}
%
%\cite{nishiguchi2016parameter, kaslik2012analytical, zayernouri2014spectral, dabiri2018numerical, maleki2019fractional, bu2018well}
%
%The mathematical explanation and model for fractional calculus, the FDE with delay
%The existence and uniqueness for F-DDE, the regularity of solutions
%Key: the stability of F-DDEs, compare with DDE...
%Numerical stability for DDE and F-DDEs....

Time fractional differential equations, or time nonlocal differential equations, have received great attention and research in recent years. The most important reason is that such equations are often more accurate than classical integer-order equations in describing various physical processes with inherited or memory characteristics. Time fractional differential operators are some kind of convolution integral operators \cite{brunner2017volterra, diethelm10, li2018generalized,liliu2018compact}, so they have non-local nature.  They usually include the classical integral derivative as a special case. The time fractional-order equations can usually be equivalent to Volterra integral equations under some conditions \cite{brunner2017volterra, diethelm10}.

Note that the physical interpretation of fractional order calculus is not as intuitive as integer order calculus, but some authors have tried to give reasonable physical explanations. Here we would like to mention the explanation given by continuous time random walk model based on probability theory (see, for example, \cite{chen2017time}), which provides a very natural mathematical basis and physical explanation for fractional derivatives operator from anomalous diffusion \cite{metzler2000random}. Also, the generalized Langevin equations together with fractional noise yield Caputo derivatives naturally \cite{burov2008fractional,kou2008stochastic,li2017fractional}. On the other hand, the delay effect is widespread in various practical models \cite{bellen2013numerical, brunner2017volterra}. Time delays occur in various models due to the time needed for material, energy, and information to be transported between different parts of a system.  For example, delays are often used to describe incubation time in biological models. Therefore, if the memory characteristics and delay effects of the model are taken into account at the same time, various time fractional order delay differential equations could be obtained, such as time fractional SIRI epidemic model with relapse and a general nonlinear incidence rate \cite{lahrouz386mittag}. 

For time fractional differential equations, just like the standard integer order equations, once we know the existence and uniqueness of the solutions, then the most important thing is to study various qualitative properties of the solutions. There are two typical differences between fractional differential equations and standard integer order equations.  One is when time tends to be the initial time, the other is when time tends to infinite. 

When time tends to be the initial time, the solutions of time fractional differential equations usually have low regularity in time, and are generally 
H\"{o}lder continuous with order $\alpha$ but without the first order derivative \cite{brunner2017volterra, diethelm10}. 
By using operator-valued Fourier multiplier results on vector-valued H\"{o}lder continuous function spaces,  a necessary and sufficient condition of the $C^{\alpha}$-well-posedness for F-DDEs is proved in \cite{bu2018well}.
The low regularity of the solutions near initial time poses a serious numerical challenge to derive high order schemes for time fractional equations. 
Some of the most important advances for time fractional differential equations can be seen in \cite{dabiri2018numerical, kaslik2012analytical,  maleki2019fractional,zayernouri2014spectral,zhao180generalized,zhang2019asymptotic}.
Another problem related to this is to assign reasonable initial conditions to time fractional order equations. 
As pointed out in \cite{garrappa2020initial}, this is generally an open and debated issue and earning considerable attention. 
Nevertheless, this does not preclude various applications of time fractional order equations.
This is not the focus of this article. We will take the Caputo fractional derivative and impose the standard initial function for F-DDEs.

When time tends to infinity, the solutions of the time fractional order equations exhibit a completely different asymptotic behavior from that of the integer order equations.  The standard integer order equations usually converge exponentially to the equilibrium state, while the time fractional order equations often have only algebraic decay rate, which is $O(t^{-\alpha})$ as $t\to+\infty$ and leads to the so-called Mittag-Leffler stability. See the exact definition below in section \ref{sec:Stabreg}.  
For the continuous time fractional differential equations, several related results have been derived recently in \cite{wang2015dissipativity, wang2019dissipativity} under some structural assumptions.  For linear F-DDEs, by constructing a new generalized delay matrix function (of Mittag-Leffler type) and studying its related properties, the authors \cite{vcermak2016stability,vcermak2017fractional} not only gave the optimal decay rate of the solutions to F-DDEs accurately, but also a fully description of the stability region of F-DDEs. This new technique is further used in \cite{siegmund2020stability} to analyze nonlinear F-DDEs model. In \cite{tuan2020qualitative}, by using the linearization method combined with a weighted type norm, the authors presented various qualitative analyses including Mittag-Leffler stability for fractional systems with time varying delay. Note that Mittag-Leffler stability is a stronger concept than the usual asymptotic stability, it needs to finely describe the optimal algebraic decay rate of the solutions.

Compared with F-ODEs, the numerical stability analysis for F-DDEs will face several new challenges. Some authors have recently studied various numerical methods of F-DDEs \cite{dabiri2018numerical, kaslik2012analytical,maleki2019fractional,zayernouri2014spectral,
zhao180generalized}. In \cite{vcermak2020exact}, the authors provided the stability conditions for pure delay fractional differential equations for continuous and discrete solutions and proved that the fractional backward Euler method is not $\tau(0)$-stable.  This negative result is quite different from the classical DDEs, in which backward Euler is $\tau(0)$-stable \cite{bellen2013numerical, guglielmi1998delay,huang2009delay}.  This shows the complexity of the numerical stability region of the F-DDEs from  certain aspects, and also shows that the numerical stability region of F-DDEs is $\alpha$-dependent 
However, the accurate description of numerical stability region of F-DDEs for general scalar test models is still open.

On the other hand, as far as we know, there are few results for the long-term qualitative analysis of the numerical solutions of time fractional-order equations, i.e., the numerical Mittag-Leffler stability.   In \cite{wang2019dissipativity}, a preliminary study \tcb{was conducted} on the F-DDEs through energy methods. However, this method relies heavily on the special structure of the coefficients of the numerical scheme, so that only low-order schemes can meet the requirements. In fact, we guess they must be lower than the second order, and they must be the $\mathcal{CM}$-preserving schemes which are introduced in our recently work \cite{LiWang2019}. It is worth pointing out here that long term behavior analysis of numerical solutions for time fractional-order equations cannot normally be obtained from Gr\"{o}nwall-type inequalities \cite{wang2020ML}. 
  In order to overcome the weakness of the energy method in \cite{wang2019dissipativity}, especially in order to establish a direct connection with the numerical stability region, we turn to the singularity analysis of generating function as a new tool in \cite{LiWang2019,wang2020ML}, through which the numerical Mittag-Leffler stability theory of F-ODEs  with or without perturbations near  equilibriums was established in \cite{wang2020ML}.
 
In this paper, we focus on the following scalar test model for F-DDEs with order $\alpha\in(0, 1)$
\begin{gather}\label{eq:F-DDEs}
\begin{split}
&\mathcal{D}_c^{\alpha}y(t) =ay(t)+by(t-\tau), \quad t>0,\\
&y(t)=\varphi(t),\quad -\tau \leq t\leq 0,
\end{split}
\end{gather}
where $a,b\in\mathbb{R}$, the delay $\tau>0$ is a fixed constant, $\mathcal{D}_c^{\alpha}y(t):=\frac{1}{\Gamma(1-\alpha)}\int_{0}^{t} \frac{y'(s)}{(t-s)^{\alpha}}ds$ stands for the Caputo fractional derivative and $\varphi$ is the initial function.
Although this model is relatively simple, it is the basis for understanding more complex nonlinear models, and a thorough understanding of this model may provide much insight into numerical stability of F-DDEs. Note that when $b=0$ and $a\in\mathbb{C}$, \eqref{eq:F-DDEs} is reduced to the fractional test model or linear Volterra integral equation model, and its theoretical or numerical stability has been studied a lot \cite{brunner2017volterra,bellen2013numerical}, among which the basic stability theorem of fractional linear multistep method or convolution quadrature was proved in  \cite{lubich1986stability}. When $a=0$ and $b\in\mathbb{C}$, \eqref{eq:F-DDEs} becomes the fractional pure delay differential  equation, for which the stability results are studied in \cite{vcermak2020exact}.

Our goal in this article is twofold. Firstly, we aim to give an accurate description of the stability region of the numerical solutions of F-DDEs based on the GL scheme, and further study the $\tau(0)$-stability of the numerical scheme. Secondly, we prove the Mittag-Leffler stability of the numerical solutions provided that the parameters fall into the numerical stability region by the singularity analysis of generating function. The main results can be summarized as follows.

\begin{theorem}[Reformulation of Theorem \ref{thm:numericalregion} and Theorem \ref{thm:noabsstability}]
Fix $k\in \mathbb{N}_+$, $h=\tau/k$ and consider the GL method for \eqref{eq:F-DDEs}. When $k=1$, the numerical stability region $\mathcal{S}_k$ in the $(a, b)$-plane lies in the region between $a+b=0$ and $a-b=(2/\tau)^{\alpha}$.
When $k\ge 2$, the numerical stability region $\mathcal{S}_k$ in the $(a, b)$-plane lies between $a+b=0$
and the curve $\Gamma_0$:
\begin{equation}\label{eq:main}
\begin{split}
\Gamma_0:\quad \left\{
\begin{split}
& a=2^{\alpha}h^{-\alpha}\sin^{\alpha} \left( \frac{h\theta}{2\tau} \right) \frac{\sin(\theta+\alpha(\pi/2-h\theta/(2\tau)))}
{\sin(\theta)},\\
& b=-2^{\alpha}h^{-\alpha}\sin^{\alpha} \left( \frac{h\theta}{2\tau} \right) \frac{\sin(\alpha\pi/2-\alpha  h\theta/(2\tau))}{\sin(\theta)},
\end{split}\right.
\quad\quad
\theta\in \left( \frac{1-\alpha}{1-\alpha/k}\pi, \pi \right).
\end{split}
\end{equation}

Moreover, for any $k\ge 1$, there is always a portion of the stability region $\mathcal{S}_*$ for \eqref{eq:F-DDEs} that is outside the numerical stability region $\mathcal{S}_k$. Consequently, the numerical method is never $\tau(0)$-stable for $\alpha\in (0, 1)$.
\end{theorem}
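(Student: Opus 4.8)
The second assertion is a formality once the first is in hand: by definition, $\tau(0)$-stability of the GL scheme would mean that every $(a,b)$ in the stability region $\mathcal{S}_*$ of the continuous problem \eqref{eq:F-DDEs} lies in $\mathcal{S}_k$ for every admissible step $h=\tau/k$, $k\in\mathbb{N}_+$ (for such $h$ the delay is exactly $k$ steps), i.e. $\mathcal{S}_*\subseteq\bigcap_{k\ge1}\mathcal{S}_k$. So the whole task is to produce, for each fixed $k$, a point of $\mathcal{S}_*$ outside $\mathcal{S}_k$. By Theorem~\ref{thm:numericalregion}, $\mathcal{S}_k$ is confined to the region $R_k$ lying between $\{a+b=0\}$ and the ``outer'' boundary $C_k$ (the line $L_1:a-b=(2/\tau)^\alpha$ if $k=1$, the curve $\Gamma_0$ of \eqref{eq:main} if $k\ge2$); hence it is enough to find a single point $p$ of $C_k$ in the interior of $\mathcal{S}_*$. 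Indeed $C_k\subseteq\partial R_k$, so openness of $\mathcal{S}_*$ gives a ball around $p$ inside $\mathcal{S}_*$ that also meets the complement of $R_k\supseteq\mathcal{S}_k$, and the points of that ball on the outer side of $C_k$ lie in $\mathcal{S}_*\setminus\mathcal{S}_k$.

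Such a $p$ exists because the two boundary loci genuinely disagree when $\alpha<1$. Apart from the segment it shares with $\{a+b=0\}$, $\partial\mathcal{S}_*$ is the image of the critical ray $\arg s=\alpha\pi/2$ under the characteristic relation $s^\alpha=a+be^{-s\tau}$; writing $s=\rho e^{i\alpha\pi/2}$ and $u=\rho\tau\sin(\alpha\pi/2)$ one finds (cf.\ \S\ref{sec:Stabreg} and \cite{vcermak2016stability,vcermak2020exact})
\[
a_*(u)=\rho^{\alpha}\frac{\sin(u+\alpha^2\pi/2)}{\sin u},\qquad b_*(u)=-\rho^{\alpha}e^{\,u\cot(\alpha\pi/2)}\frac{\sin(\alpha^2\pi/2)}{\sin u},\qquad \rho=\frac{u}{\tau\sin(\alpha\pi/2)},
\]
the relevant arc being the one with $a+b<0$ through the pure-delay threshold point $(0,\beta^*)$ at $u^*=\pi(2-\alpha^2)/2$. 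On the numerical side, substituting $\xi=e^{i\psi}$ into the GL characteristic equation $((1-\xi)/h)^\alpha=a+b\xi^k$ reproduces $\Gamma_0$ (with $\theta=k\psi$), and $\xi=-1$ gives $L_1$. The structural source of the disagreement is the factor $e^{u\cot(\alpha\pi/2)}>1$ in $b_*$: a Caputo-stable configuration may carry a characteristic root with $\Re s>0$ (so $|e^{-s\tau}|<1$, admitting a larger $|b|$), whereas the GL scheme, whose boundary locus lives on $|\xi|=1$, cannot match this. At $\alpha=1$ the factor is $1$, the two loci coincide, and this is precisely why backward Euler is $\tau(0)$-stable for integer DDEs.

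Concretely, compare the loci on the $b$-axis. For $k\ge2$ the curve $\Gamma_0$ meets $\{a=0\}$ at $\theta_a=\pi(2-\alpha)k/(2k-\alpha)\in(\theta_0,\pi)$, and there, using $\sin(\alpha\pi/2-\alpha\theta_a/(2k))=\sin\theta_a$, the formulas collapse to $b_k=-\tau^{-\alpha}\bigl(2k\sin\tfrac{\pi(2-\alpha)}{2(2k-\alpha)}\bigr)^{\alpha}$, which tends as $k\to\infty$ to $-\tau^{-\alpha}(\pi(2-\alpha)/2)^{\alpha}$ (the same formula at $k=1$ recovers the intercept $-(2/\tau)^\alpha$ of $L_1$). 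On the continuous side the $b$-intercept of $\partial\mathcal{S}_*$ is the pure-delay threshold $\beta^*=-\tau^{-\alpha}\bigl(\tfrac{\pi(2-\alpha^2)}{2\sin(\alpha\pi/2)}\bigr)^{\alpha}e^{\frac{\pi(2-\alpha^2)}{2}\cot(\alpha\pi/2)}$ \cite{vcermak2020exact}, and
\[
|\beta^*|\;>\;\tau^{-\alpha}\Bigl(\tfrac{\pi(2-\alpha)}{2}\Bigr)^{\alpha}\qquad(0<\alpha<1),
\]
simply because $2-\alpha^2>2-\alpha$, $\sin(\alpha\pi/2)<1$ and $e^{\frac{\pi(2-\alpha^2)}{2}\cot(\alpha\pi/2)}>1$. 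Hence the limiting curve $C_\infty=\lim_k\Gamma_0$, and $\Gamma_0$ itself for $k$ large, crosses the $b$-axis strictly inside $(\beta^*,0)=\mathcal{S}_*\cap\{a=0\}$: this yields the desired interior point $p$, and with it a witness in $\mathcal{S}_*\setminus\mathcal{S}_k$.

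The remaining work — and the genuine obstacle — is to make this uniform in $k\ge1$ and $\alpha\in(0,1)$. The $b$-axis comparison degenerates as $\alpha\to1$ (there $|\beta^*|\to\pi/(2\tau)$, while $|b_k|$ does not tend to this for finite $k$), so for $\alpha$ near $1$ one should instead work near the corner $Q_k$ at which $\{a+b=0\}$ meets $C_k$: a point just on the outer side of $C_k$ and just below $\{a+b=0\}$ near $Q_k$ is a witness provided $Q_k$ already lies on the portion of $\{a+b=0\}$ belonging to $\partial\mathcal{S}_*$ — equivalently, $a(Q_k)$ is smaller than the abscissa $a_*(u_0)$ at which $\partial\mathcal{S}_*$ detaches from $\{a+b=0\}$, where $u_0$ is the zero of $a_*+b_*$. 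This inequality holds with equality at $\alpha=1$ and should be strict for $\alpha<1$, again by virtue of the enhancement factor in $b_*$; establishing it uniformly, and then patching the two regimes with a continuity/compactness argument using the explicit description of $\mathcal{S}_k$ from Theorem~\ref{thm:numericalregion}, is the core difficulty — the two boundary loci are tangent at $\alpha=1$, so the gap one must exhibit, though always positive, vanishes there.
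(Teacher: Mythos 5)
Your overall reduction (exhibit, for each $k$, one interior point of $\mathcal{S}_*$ on the far side of the outer boundary of $\mathcal{S}_k$) is the same as the paper's, but the concrete witness you construct is based on an incorrect description of $\partial\mathcal{S}_*$, and with the correct description your comparison fails. The boundary of the continuous stability region is \emph{not} the image of the ray $\arg s=\alpha\pi/2$: as in Lemma \ref{lem:Laptrans} (and its proof via purely imaginary characteristic roots), the lower boundary $\Gamma$ in \eqref{eq:truestab} comes from $s=i\omega$, and there is no factor $e^{u\cot(\alpha\pi/2)}$; a stable configuration cannot carry a characteristic root with $\Re s>0$. In particular the $b$-axis intercept of $\partial\mathcal{S}_*$ (the pure-delay threshold) is obtained from \eqref{eq:truestab} at $\theta=\pi(2-\alpha)/2$ and equals $-\tau^{-\alpha}\bigl[\pi(2-\alpha)/2\bigr]^{\alpha}$, not your $\beta^*$. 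Hence your key strict inequality $|\beta^*|>\tau^{-\alpha}\bigl(\pi(2-\alpha)/2\bigr)^{\alpha}$ is spurious, and in fact the comparison at $a=0$ goes the wrong way: your own formula gives $|b_k|=\tau^{-\alpha}\bigl(2k\sin\tfrac{(2-\alpha)\pi}{2(2k-\alpha)}\bigr)^{\alpha}$, and $2k\sin\tfrac{(2-\alpha)\pi}{2(2k-\alpha)}>\tfrac{(2-\alpha)\pi}{2}$ for every $k\ge2$ (e.g.\ $\alpha=0.5$, $k=2$: $2.494$ vs $2.356$), so $\Gamma_0$ crosses the $b$-axis \emph{below} $\Gamma$ and the whole $b$-axis segment of $\mathcal{S}_*$ lies inside $\mathcal{S}_k$ — there is no witness there. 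This matches the paper's Figure \ref{stacomp}: the lost region $\Delta_1$ sits near the vertex of $\mathcal{S}_*$ (at $a>0$), while near $a=0$ the numerical region is actually larger. Finally, you yourself flag that uniformity in $k$ and in $\alpha\to1^-$ is unresolved, so even apart from the wrong intercept the argument is incomplete; and the first half of the statement (the explicit description of $\mathcal{S}_k$) is simply quoted from Theorem \ref{thm:numericalregion} rather than proved, although that description (boundary locus plus the ordering of the curves $\Gamma_m$) is the substantive content being restated.

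The paper's route avoids all of these issues and is uniform in $k$ and $\alpha$. For $k=1$ it compares the line $a-b=(2/\tau)^{\alpha}$ with the maximal value of $a-b$ on $\mathcal{S}_*$, which by Lemma \ref{lmm:continuouscurve} is $2a_P$ attained at the vertex $P$, and shows $2^{\alpha}<2a_P$ (Lemma \ref{lmm:locuslines}(i)); so points of $\mathcal{S}_*$ near $P$ have $a-b>(2/\tau)^{\alpha}$ and lie outside $\mathcal{S}_1$. For $k\ge2$ it shows (Proposition \ref{pro:Gamma0versusGamma}(i)) that the vertex $A_k$ of $\Gamma_0$ on $a+b=0$ satisfies $a^{(k)}<a_P$ strictly for all finite $k$ (monotone in $k$ with limit $a_P$), and combines this with the monotonicity of $a-b$ along $\Gamma_0$ (Lemma \ref{lem:decrecruv}), so that $\sup_{\mathcal{S}_k}(a-b)=2a^{(k)}<2a_P$ and again a neighborhood of the vertex of $\mathcal{S}_*$ lies outside $\mathcal{S}_k$; the asymptote comparison in Proposition \ref{pro:Gamma0versusGamma}(ii) only describes where $\Gamma_0$ and $\Gamma$ cross. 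If you want to salvage your plan, replace the $b$-axis comparison by this vertex/$\sup(a-b)$ comparison; the $\alpha\to1^-$ degeneracy you worry about then disappears because $a^{(k)}<a_P$ holds strictly for every $\alpha\in(0,1)$.
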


The Mittag-Leffler stability holds,
\begin{theorem}[Reformulation of Theorem \ref{thm:main}]
\label{thm:MLstab}
Let $\alpha\in(0,1)$, $a,b\in\mathbb{R}$,  $k\in\mathbb{N}^{+}$ and $h=\tau/k$. 
Then the numerical solutions for \eqref{eq:F-DDEs} based on the GL method is Mittag-Leffler stable if $(a, b)$ falls into the numerical stability region $\mathcal{S}_k$ described above, or 
\[
y_{n}\sim - \frac{ y_{0}} { \Gamma(1-\alpha) \left( a +b\right) } t_n^{-\alpha} =O(t_n^{-\alpha}),
~~n\to\infty.
\]
\end{theorem}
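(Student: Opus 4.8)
The plan is to pass to generating functions, locate the dominant singularity of the generating function of $\{y_n\}$, and read off the large-$n$ asymptotics by singularity analysis, following and extending the strategy used for F-ODEs in \cite{wang2020ML,LiWang2019}. \emph{Setting up the generating function.} Write $h=\tau/k$, $t_n=nh$, and let $\omega_j$ be the GL weights, i.e.\ $(1-\xi)^{\alpha}=\sum_{j\ge0}\omega_j\xi^j$ with $\omega_0=1$; the GL (fractional backward Euler) scheme for \eqref{eq:F-DDEs} reads
\[
h^{-\alpha}\sum_{j=0}^{n}\omega_j\bigl(y_{n-j}-y_0\bigr)=a\,y_n+b\,y_{n-k},\qquad n\ge 1,
\]
with $y_j=\varphi(jh)$ for $-k\le j\le 0$. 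Multiplying by $\xi^n$, summing over $n\ge1$, and using $\sum_{j\ge0}\omega_j\xi^j=(1-\xi)^{\alpha}$ together with $\sum_{n\ge0}\bigl(\sum_{j=0}^{n}\omega_j\bigr)\xi^n=(1-\xi)^{\alpha-1}$, one obtains for $Y(\xi):=\sum_{n\ge0}y_n\xi^n$ the closed form
\[
Y(\xi)=\frac{h^{-\alpha}y_0\,(1-\xi)^{\alpha-1}-a\,y_0+b\,P(\xi)}{h^{-\alpha}(1-\xi)^{\alpha}-a-b\,\xi^{k}}=:\frac{N(\xi)}{\delta(\xi)},
\]
where $P(\xi)=\sum_{l=1}^{k-1}\varphi\bigl((l-k)h\bigr)\xi^{l}$ is a history polynomial of degree $\le k-1$ ($P\equiv0$ when $k=1$), and $(1-\xi)^{\alpha}$ is taken with branch cut along $[1,+\infty)$, so that $N$ and $\delta$ are analytic on $\mathbb{C}\setminus[1,+\infty)$.

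\emph{Locating the dominant singularity.} The zeros of $\delta$ are exactly the characteristic roots of the scheme, and the boundary-locus analysis of Theorem~\ref{thm:numericalregion} shows that for $(a,b)$ in the (open) stability region $\mathcal{S}_k$ every zero of $\delta$ satisfies $|\xi|>1$; in particular $\delta(1)=-(a+b)\neq0$, so $\xi=1$ is not a zero. Since $-b\xi^k$ dominates $\delta$ for large $|\xi|$ when $b\neq0$ (and $b=0$ reduces to the scalar F-ODE treated in \cite{wang2020ML}), $\delta$ has only finitely many zeros off the cut, and by compactness they stay uniformly away from $\{|\xi|\le1\}$. Hence there is a $\Delta$-domain $\Delta=\{\xi:|\xi|<1+\eta,\ |\arg(\xi-1)|>\phi\}$ (for some $\eta>0$, $\phi\in(0,\pi/2)$) on which $N$ and $\delta$ are analytic and $\delta$ is nonvanishing, so $Y$ is analytic on $\Delta$ and its only boundary singularity in $\overline{\Delta}$ is the branch point $\xi=1$, coming from the factor $(1-\xi)^{\alpha-1}$ in $N$. (In the degenerate subcase $a=h^{-\alpha}$, $N$ and $\delta$ share the zero $\xi=0$; cancelling it shows $Y$ is still analytic at the origin, as it must be.)

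\emph{Local expansion and transfer.} Near $\xi=1$ write $\delta(\xi)^{-1}=-\tfrac1{a+b}\bigl(1+O((1-\xi)^{\alpha})\bigr)$ and $N(\xi)=h^{-\alpha}y_0(1-\xi)^{\alpha-1}+O(1)$, whence
\[
Y(\xi)=-\frac{h^{-\alpha}y_0}{a+b}\,(1-\xi)^{\alpha-1}+R(\xi),\qquad R(\xi)=O\bigl((1-\xi)^{2\alpha-1}\bigr)+O(1)=o\bigl((1-\xi)^{\alpha-1}\bigr),
\]
uniformly as $\xi\to1$ in $\Delta$. Applying the transfer theorem of singularity analysis (as in \cite{wang2020ML,LiWang2019}) on $\Delta$, together with $[\xi^n](1-\xi)^{\alpha-1}\sim n^{-\alpha}/\Gamma(1-\alpha)$ and $t_n^{-\alpha}=n^{-\alpha}h^{-\alpha}$, yields
\[
y_n=[\xi^n]Y(\xi)\sim-\frac{h^{-\alpha}y_0}{a+b}\cdot\frac{n^{-\alpha}}{\Gamma(1-\alpha)}=-\frac{y_0}{\Gamma(1-\alpha)(a+b)}\,t_n^{-\alpha},
\]
and in particular $|y_n|\le C\,t_n^{-\alpha}$, which is the asserted numerical Mittag--Leffler stability, with the same algebraic rate $O(t^{-\alpha})$ as in the continuous problem.

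\emph{Main obstacle.} The computational steps (the generating-function identity and the local expansion) are routine; the crux is the second step, namely upgrading the statement ``$(a,b)\in\mathcal{S}_k$ implies $\delta$ is zero-free on $\{|\xi|\le1\}$'' — essentially the content of Theorem~\ref{thm:numericalregion} — to analyticity of $Y$ on a genuine $\Delta$-domain strictly containing the unit disk while excising the branch cut $[1,+\infty)$, so that the transfer theorem is legitimately applicable. This requires the uniform separation of the characteristic roots from the unit circle (a compactness argument combined with the large-$|\xi|$ behaviour of $\delta$) and a careful treatment of the branch of $(1-\xi)^{\alpha}$; the degenerate subcase $a=h^{-\alpha}$ and the boundary case $a+b=0$, where the leading term of the expansion degenerates, must be discussed separately.
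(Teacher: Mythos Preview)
Your proposal is correct and follows essentially the same route as the paper: derive the closed form $Y(\xi)=N(\xi)/\delta(\xi)$ for the generating function, use a compactness argument to promote ``$\delta$ is zero-free on $\overline{D(0,1)}$'' to ``$\delta$ is zero-free on a $\Delta$-domain $\Delta(R,\theta)$ with $R>1$'', and then apply the Flajolet--Sedgewick transfer theorem (the paper's Lemma~6.1) to the leading term $-\tfrac{h^{-\alpha}y_0}{a+b}(1-\xi)^{\alpha-1}$. The only cosmetic differences are that the paper excludes $a=h^{-\alpha}$ from the outset rather than cancelling the common zero at $\xi=0$, and its compactness step is phrased directly for the $\Delta$-domain rather than via finiteness of the zero set.
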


The rest of this article is organized as follows. In Section \ref{sec:Stabreg}, we review the related concepts of the stability region for continuous F-DDE model and the corresponding main results. In Section \ref{sec:blt}, in parallel with the continuous case, we first introduce the concepts of relative stability and stability region of numerical solutions. Then, we derive the characteristic polynomial for the numerical scheme based on Gr\"{u}nwald-Letnikov (GL) approximation for the Caputo fractional derivative by the discrete Laplacian transform and generating functions.  By means of the boundary locus technique, the exact numerical stability region is given in Section \ref{sec:numstab}. In particular, we compare the differences in the numerical stability region between F-DDEs and classical DDEs, and prove by analysis and drawing that F-DDEs is never $\tau(0)$-stable; see Section \ref{sec:Notau}. Further, we show that as long as the numerical solution is stable, it is also Mittag-Leffler stable in Section \ref{sec:mlstability} by the technique of singularity analysis of generating function, that is, the numerical solution has a polynomial decay rate for long time that is exactly the same as the continuous equation. 
%This feature is also a typical feature of the F-DDEs which is different from the classical DDEs.

\section{Stability regions of the linear scalar F-DDEs}
\label{sec:Stabreg}

In this section, we collect some definitions and stability properties
for the linear scalar test model \eqref{eq:F-DDEs}, which will be the foundation of our study in later sections. 
We first observe that if we redefine
\begin{gather}\label{eq:scaling}
\tilde{a}=a\tau^{\alpha},~~\tilde{b}=b\tau^{\alpha},~~\tilde{t}=t/\tau,
~~\tilde{y}(\tilde{t})=y(t),~~\tilde{\varphi}(\tilde{t})=\varphi(t),
\end{gather}
then the form of \eqref{eq:F-DDEs} stays unchanged with $\tau=1$.
Hence, dropping the tildes, we will study the test model \eqref{eq:F-DDEs} with $\tau=1$:
\begin{gather}\label{eq:FDDE1}
\begin{split}
&\mathcal{D}_c^{\alpha}y(t) =ay(t)+by(t-1), \quad t>0,\\
&y(t)=\varphi(t),\quad -1 \leq t\leq 0.
\end{split}
\end{gather}
As long as the properties of \eqref{eq:FDDE1} are obtained, the properties for general $\tau$ can be easily obtained using the relations \eqref{eq:scaling}.

We will mainly study the stability properties of the model presented in \eqref{eq:FDDE1}. For this purpose, we introduce the following definitions.
\begin{definition}\label{def:stabilitycontinuous}
Let $\alpha\in(0,1)$ and consider parameters $a,b\in\mathbb{R}$.
\begin{enumerate}[(1)]
\item The asymptotic stability region in the $(a,b)$-plane is defined by $\mathcal{S}_*=\{(a,b): y(t)\to 0~ \text{as}~ t\to +\infty\}$ for all initial function $\phi$.
\item The solution is called Mittag-Leffler stable if $\|y(t)\|\leq C_{\alpha} t^{-\alpha}~ \text{as}~ t\to +\infty$, where the constant $C_{\alpha}>0$ is independent of $t$.
\end{enumerate}
\end{definition}

Obviously, Mittag-Leffler stability is a stronger result than asymptotic stability. Asymptotic stability only requires that the solutions tend to zero, but does not describe the decay rate. Mittag-Leffler stability requires that the decay rate of the solutions being algebraic, which is a typical feature of time fractional differential equations. The fundamental stability result for \eqref{eq:FDDE1} (or equivalently \eqref{eq:F-DDEs}) has been proved in \cite{vcermak2016stability,vcermak2017fractional}.

\begin{lemma}[{\cite[Theorem 4,5]{vcermak2017fractional}}] \label{lem:Laptrans}
Let $\alpha\in(0,1)$, and $a,b\in\mathbb{R}$. The zero solution of \eqref{eq:FDDE1} is asymptotically stable if and only if $(a, b)$ is an interior point of the region $\mathcal{S}_*$, bounded by the line $a+b=0$ from above and by the follwoing parametric curve $\Gamma$ from below
\begin{equation}\label{eq:truestab}
\Gamma:\quad a=   \frac{ \theta^{\alpha} \sin{\left(\theta+\frac{\alpha\pi}{2}\right)} } { \sin(\theta)}, 
\quad b=- \frac{ \theta^{\alpha} \sin{\left(\frac{\alpha\pi}{2}\right)} } { \sin(\theta)}, 
\quad \theta \in \left((1-\alpha)\pi,  \pi\right).
\end{equation}
In the asymptotic stability region, the solution is also Mittag-Leffler stable i.e., $\|y(t)\|\leq C_{\alpha} t^{-\alpha}$ as $t\to+\infty$.
\end{lemma}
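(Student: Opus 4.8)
\emph{Sketch of proof.} The plan is to pass to the Laplace transform and read off the long-time behaviour of $y$ from the characteristic function, paralleling the singularity analysis used later for the discrete problem. Writing $Y(s)=\int_0^\infty e^{-st}y(t)\,dt$ and using $\mathcal{L}[\mathcal{D}_c^\alpha y](s)=s^\alpha Y(s)-s^{\alpha-1}\varphi(0)$ together with $\mathcal{L}[y(\cdot-1)](s)=e^{-s}\big(Y(s)+\Phi(s)\big)$, where $\Phi(s):=\int_{-1}^0 e^{-st}\varphi(t)\,dt$ is entire, one gets
\begin{equation*}
Y(s)=\frac{s^{\alpha-1}\varphi(0)+b\,e^{-s}\Phi(s)}{Q(s)},\qquad Q(s):=s^{\alpha}-a-b\,e^{-s},
\end{equation*}
with $Q$ analytic on the cut plane $\mathbb{C}\setminus(-\infty,0]$ and $s^\alpha$ the principal branch. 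The asymptotics of $y$ are then governed by the zeros of $Q$ in this cut plane and by the branch-point singularity of $Y$ at $s=0$.

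First I would locate the critical parameter curves by the boundary locus technique, i.e.\ by asking for which $(a,b)$ the function $Q$ vanishes on $\partial\{\Re s>0\}$. Since $Q(0)=-(a+b)$, the origin is a zero exactly on $a+b=0$; for $a+b>0$ one has $Q(0)<0$ whereas $Q(s)\to+\infty$ as $s\to+\infty$ along $\mathbb{R}_+$, so $Q$ has a positive real zero and $y$ grows, which pins $a+b=0$ as the upper boundary. For a purely imaginary zero $s=\mathrm{i}\theta$, $\theta>0$, separating real and imaginary parts of $Q(\mathrm{i}\theta)=0$ via $(\mathrm{i}\theta)^\alpha=\theta^\alpha e^{\mathrm{i}\alpha\pi/2}$ and $e^{-\mathrm{i}\theta}=\cos\theta-\mathrm{i}\sin\theta$ produces exactly the parametrization \eqref{eq:truestab}; one then checks $\theta\in((1-\alpha)\pi,\pi)$ is the range for which the resulting point lies below $a+b=0$ ($\Gamma$ meeting the line $a+b=0$ as $\theta\to(1-\alpha)\pi$ and escaping to infinity as $\theta\to\pi$), so $\Gamma$ and $a+b=0$ enclose a well-defined candidate region.

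Next I would show that for every $(a,b)$ in the interior of that region, $Q$ has no zero in the closed right half-plane, whence the region coincides with $\mathcal{S}_*$. The fact that makes this tractable is the a priori bound: if $\Re s\ge0$ and $Q(s)=0$ then $|s|^{\alpha}=|a+b\,e^{-s}|\le|a|+|b|$, so all right half-plane zeros lie in the disc $|s|\le(|a|+|b|)^{1/\alpha}$; similarly zeros with bounded real part have bounded imaginary part, so there are only finitely many zeros in any right half-strip. One then applies the argument principle to $Q$ along $\partial(\{\Re s\ge0\}\cap\{|s|\le R\})$ (indented around the branch point $0$) for large $R$: the right half-plane zero count it computes is an integer depending continuously on $(a,b)$ so long as $Q$ has no zero on the contour, i.e.\ off the set of parameters with an imaginary-axis zero of $Q$; apart from $\{a+b=0\}$ and $\Gamma$, that set is confined to $\{a+b\ge0\}$, so the count is constant on the interior of the candidate region. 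Evaluating it at, say, $b=0$, $a<0$ — where $Q(s)=s^{\alpha}-a$ has its sole zero $|a|^{1/\alpha}e^{\mathrm{i}\pi/\alpha}$, off the principal sheet since $\pi/\alpha>\pi$ — shows the count is $0$ throughout the region, while it becomes positive once $(a,b)$ drops below $\Gamma$. This is the step I expect to be the main obstacle: the branch cut must be handled carefully inside the contour/winding argument, and one must establish the transversality statement that the imaginary zero genuinely moves into $\Re s>0$ as $(a,b)$ crosses $\Gamma$ (with no zero entering from infinity), so that crossing $\Gamma$ is precisely what changes the count.

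Finally, with all zeros of $Q$ in $\{\Re s<0\}$ — hence, by finiteness in strips, bounded away from the imaginary axis — I would invert the Laplace transform along a Bromwich line and deform the contour onto the two banks of the cut $(-\infty,0]$ together with a small Hankel loop around $s=0$, picking up no poles. Using $Y(s)=\dfrac{s^{\alpha-1}\varphi(0)+b\,\Phi(0)}{-(a+b)}+O(s^{\alpha})$ as $s\to0$ (here $a+b\neq0$ on $\mathcal{S}_*$), the loop around the origin yields the leading term
\begin{equation*}
y(t)\ \sim\ \frac{-\varphi(0)}{(a+b)\,\Gamma(1-\alpha)}\,t^{-\alpha},\qquad t\to+\infty,
\end{equation*}
while the straight banks and the arcs contribute $o(t^{-\alpha})$ by a standard Watson/Tauberian estimate (the smooth part of $\Phi$ and the $s^{\alpha}$-correction give faster-decaying tails). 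This gives $\|y(t)\|\le C_\alpha t^{-\alpha}$, which is simultaneously the claimed asymptotic and Mittag-Leffler stability; conversely, a positive real zero (if $a+b>0$) or a right half-plane zero (if $(a,b)$ lies below $\Gamma$) precludes decay, so the region is exactly $\mathcal{S}_*$.
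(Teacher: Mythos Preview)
The paper does not prove this lemma at all: it is stated with the attribution \texttt{[Theorem 4,5]\{vcermak2017fractional\}} and used as a black box, so there is no ``paper's own proof'' to compare against. Your sketch is therefore not redundant with anything in the manuscript; it is an outline of the argument from the cited reference.

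On its merits, your approach is the standard one and is structurally sound: Laplace transform, boundary locus on $i\mathbb{R}$ to produce the curve $\Gamma$ and the line $a+b=0$, an argument-principle continuation to propagate the zero count across the parameter region from a known point (e.g.\ $b=0$, $a<0$), and Hankel-contour inversion around the branch point to extract the $t^{-\alpha}$ rate. Two places deserve more care before this becomes a proof rather than a sketch. First, the claim that the boundary-locus set in $\{a+b<0\}$ is exactly $\Gamma$ (for $\theta\in((1-\alpha)\pi,\pi)$) and nothing else requires ruling out imaginary zeros with $\theta$ outside that interval contributing additional curves below $a+b=0$; you assert this but do not argue it. Second, the transversality/crossing step you flag is indeed the crux: you must show that as $(a,b)$ moves across $\Gamma$ a single zero passes from $\Re s<0$ to $\Re s>0$ (and that no zeros enter from infinity or through the cut), which needs either an explicit derivative computation $\partial_s Q\neq 0$ along $\Gamma$ plus an implicit-function argument, or a careful homotopy of the winding number. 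The Mittag-Leffler part is fine in spirit, though the contribution from the banks of the cut is only $O(t^{-\alpha-1})$ after one integration by parts, not merely $o(t^{-\alpha})$ by a vague Tauberian appeal; making that explicit strengthens the conclusion.
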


The stability region $\mathcal{S}_*$ has a vertex $P=P(a, b)$, where 
%\[
$a=-b=\frac{[((1-\alpha)\pi]^{\alpha}\sin(\frac{\alpha\pi}{2})}{\sin(\alpha\pi)}>0.$
%\]
The region can be decomposed into two parts $\mathcal{S}_*=\mathcal{R}_1\cup \mathcal{R}_2$, where
\begin{equation}\label{eq:D1D2}
\begin{split}
(1) \quad \mathcal{R}_1: &=\{(a, b): a\le b<-a, a\le 0 \},\\
(2) \quad \mathcal{R}_{2}: &=|a|+b<0 ~\text{and}~ 1<\frac{(1-\alpha)\pi/2+\arccos[(-a/b)\sin(\alpha\pi/2)]}{ \left[ a\cos(\alpha\pi/2)+\sqrt{b^2-a^2\sin^2(\alpha\pi/2)} \right]^{1/\alpha}}.
\end{split}
\end{equation}

%%因为稳定区域的图，下面也有，这个图不是非常必要。
%\begin{figure}[!ht]
%\begin{center}
%$\begin{array}{cc}
% \includegraphics[scale=0.40]{SR01.eps}
% \end{array}$
%\end{center}
%\caption{Stability region for $\tau=1$ and $\alpha=0.5$.}
%\label{stareg1}
%\end{figure}

The stability region $\mathcal{S}_*=\mathcal{R}_1\cup \mathcal{R}_2$  in the $(a,b)$-plane characterized by Lemma \ref{lem:Laptrans} for $\alpha=0.5$ and $\tau=1$ can be found in Fig. \ref{Lineposi}.

\begin{remark}
If we want the results for general $\tau$, we may use the scaling \eqref{eq:scaling} and obtain, for example, the lower boundary curve as
\[
a=   \frac{ \tau^{-\alpha}\theta^{\alpha} \sin{\left(\theta+\frac{\alpha\pi}{2}\right)} } { \sin(\theta)}, 
\quad b=- \frac{ \tau^{-\alpha}\theta^{\alpha} \sin{\left(\frac{\alpha\pi}{2}\right)} } { \sin(\theta)}, 
\qquad \theta \in \left((1-\alpha)\pi,  \pi\right).
\]
Redefining $\tilde{\theta}=\tau^{-1}\theta$, one has
\[
a=   \frac{\tilde{\theta}^{\alpha} \sin{\left(\tau\tilde{\theta}+\frac{\alpha\pi}{2}\right)} } { \sin(\tau\tilde{\theta})}, 
\quad b=- \frac{\tilde{\theta}^{\alpha} \sin{\left(\frac{\alpha\pi}{2}\right)} } { \sin(\tau\tilde{\theta})}, 
\qquad \tilde{\theta} \in \left(\frac{(1-\alpha)\pi}{\tau},  \frac{\pi}{\tau}\right),
\]
which is the same as in \cite{vcermak2017fractional}.
\end{remark}

\begin{remark}
It is worth notting that the stability results for F-DDEs presented in Lemma \ref{lem:Laptrans} are $\alpha$-robust. That is, when $\alpha\to 1^{-}$, 
the stability region for F-DDEs $\mathcal{S}_*=\mathcal{S}_*(\alpha)$ can be reduced to the stability region for the corresponding integer DDEs \cite{vcermak2017fractional}. 
\end{remark}

Below, we describe a property of the lower boundary curve given by \eqref{eq:truestab}, 
which will be quite useful when we discuss the numerical stability regions later. 
\begin{lemma}\label{lmm:continuouscurve}
Consider the the lower boundary curve $\Gamma$ defined in \eqref{eq:truestab}. Along $\Gamma$, the equantity $\lambda(\theta):=a(\theta)-b(\theta)$ decreases as $\theta$ increases. Geometrically, this means that the straightline $a-b=\lambda(\theta)$ that goes through the point $(a(\theta), b(\theta))$ becomes higher and higher in the $(a, b)$ plane as $\theta$ increases.
\end{lemma}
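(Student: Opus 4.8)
The plan is to reduce the monotonicity of $\lambda(\theta)=a(\theta)-b(\theta)$ along $\Gamma$ to one scalar trigonometric inequality. First I would simplify $\lambda$: applying the product‑to‑sum identity $\sin(\theta+\alpha\pi/2)+\sin(\alpha\pi/2)=2\sin(\theta/2+\alpha\pi/2)\cos(\theta/2)$ together with $\sin\theta=2\sin(\theta/2)\cos(\theta/2)$ yields the compact form
\[
\lambda(\theta)=\theta^{\alpha}\,\frac{\sin\!\left(\theta/2+\alpha\pi/2\right)}{\sin(\theta/2)} .
\]
On the range $\theta\in((1-\alpha)\pi,\pi)$ one has $\theta/2\in(0,\pi/2)$ and $\theta/2+\alpha\pi/2\in(\pi/2,\pi)$, so both sines are positive, $\lambda(\theta)>0$, and it suffices to prove $(\ln\lambda)'(\theta)<0$. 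Differentiating, $(\ln\lambda)'(\theta)=\frac{\alpha}{\theta}+\frac12\cot(\theta/2+\alpha\pi/2)-\frac12\cot(\theta/2)$, and using $\cot x-\cot y=\sin(y-x)/(\sin x\sin y)$ with $x=\theta/2$, $y=\theta/2+\alpha\pi/2$, the inequality $(\ln\lambda)'(\theta)<0$ becomes equivalent to
\[
2\alpha\,\sin(\theta/2)\,\sin\!\left(\theta/2+\alpha\pi/2\right)<\theta\,\sin(\alpha\pi/2).
\]

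Next I would set $\psi:=\alpha\pi/2\in(0,\pi/2)$ and $\phi:=\theta/2$, so $\phi$ runs over $(\pi/2-\psi,\pi/2)$ and, since $\alpha=2\psi/\pi$, the target becomes $\Phi(\phi):=\pi\phi\sin\psi-2\psi\sin\phi\sin(\phi+\psi)>0$ on that interval. I would prove this in two steps. Step one: $\Phi$ is increasing on $[\pi/2-\psi,\pi/2]$, because $\Phi'(\phi)=\pi\sin\psi-2\psi\sin(2\phi+\psi)$ and on this range $2\phi+\psi\in[\pi-\psi,\pi+\psi]$, where $\sin$ attains its maximum $\sin\psi$ at the left endpoint, so $\Phi'(\phi)\ge(\pi-2\psi)\sin\psi>0$. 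Step two: hence it is enough that $\Phi$ be positive at the left endpoint, where a direct computation gives $\Phi(\pi/2-\psi)=\pi(\pi/2-\psi)\sin\psi-2\psi\cos\psi$; writing $\eta:=\pi/2-\psi=(1-\alpha)\pi/2\in(0,\pi/2)$ this equals $\cos\eta\cdot K(\eta)$ with
\[
K(\eta):=\pi\eta-(\pi-2\eta)\tan\eta .
\]

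The heart of the argument, and the step I expect to be the main obstacle, is the scalar inequality $K(\eta)>0$ for $\eta\in(0,\pi/2)$: the crude bound $\tan\eta>\eta$ is not sharp enough near $\eta=\pi/2$, where both terms of $K$ blow up. I would settle it by convexity: $K(0)=0$, $K'(0)=\pi+2\tan 0-(\pi-0)\sec^{2}0=0$, while $K''(\eta)=2\sec^{2}\eta\,[\,2-(\pi-2\eta)\tan\eta\,]$, and the bracket is positive since, with $x=\pi/2-\eta$, $(\pi-2\eta)\tan\eta=2x\cot x<2$ because $x<\tan x$ on $(0,\pi/2)$. Thus $K$ is strictly convex on $(0,\pi/2)$ with $K(0)=K'(0)=0$, which forces $K>0$ there. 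Combining the two steps gives $\Phi>0$ on $(\pi/2-\psi,\pi/2)$, hence $\lambda'(\theta)<0$, so $\lambda$ is strictly decreasing in $\theta$; the geometric assertion is then immediate, since the line $a-b=\lambda(\theta)$ is $b=a-\lambda(\theta)$, a line of slope one whose vertical intercept $-\lambda(\theta)$ rises as $\lambda(\theta)$ decreases.
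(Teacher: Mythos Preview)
Your proof is correct. The initial reduction---simplifying $\lambda(\theta)$ via product-to-sum, taking the logarithmic derivative, and arriving at the scalar inequality
\[
2\alpha\,\sin(\theta/2)\,\sin(\theta/2+\alpha\pi/2)<\theta\,\sin(\alpha\pi/2)
\]
---is exactly what the paper does. The two proofs diverge only in how they verify this inequality.

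The paper's argument is shorter: it proves the inequality on the \emph{larger} interval $\theta\in(0,\pi)$ rather than just $((1-\alpha)\pi,\pi)$. Setting $m(\theta)=\theta-2\alpha\,\sin(\theta/2)\sin(\theta/2+\alpha\pi/2)/\sin(\alpha\pi/2)$, one has $m(0)=0$ and
\[
m'(\theta)=1-\alpha\,\frac{\sin(\theta+\alpha\pi/2)}{\sin(\alpha\pi/2)}\ge 1-\frac{\alpha}{\sin(\alpha\pi/2)}>0,
\]
the last step being Jordan's inequality $\sin(\alpha\pi/2)>\alpha$ on $(0,1)$. That finishes the proof in one line.

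Your route instead restricts to the actual parameter range, shows $\Phi$ is increasing there, and then must separately verify positivity at the left endpoint $\theta=(1-\alpha)\pi$; this forces the additional convexity argument for $K(\eta)$. Nothing is wrong with it, but the extra work is avoidable: once you have the inequality displayed above, extending the domain to $(0,\pi)$ costs nothing and makes the endpoint analysis disappear entirely.
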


\begin{proof}
Elementary properties of trigonometric functions imply that
\[
\lambda(\theta)=\frac{\theta^{\alpha}\sin(\theta/2+\alpha\pi/2)}{\sin(\theta/2)},
\]
 where $\theta \in \left((1-\alpha)\pi,  \pi\right)$. 
Now, we show that $\lambda(\theta)>0$ is a strictly decreasing function. To do this, we compute that
\[
\frac{d}{d\theta}\ln(\lambda(\theta))= \frac{\alpha}{\theta}-\frac{\sin(\alpha\pi/2)}{2\sin(\theta/2)\sin(\theta/2+\alpha\pi/2)}.
\]
To show the right hand side is negative, it suffices to so that $m(\theta):=\theta-2\alpha\frac{\sin(\theta/2)\sin(\theta/2+\alpha\pi/2)}{\sin(\alpha\pi/2)}>0$.
Clearly, $m(0)=0$ and $\frac{d}{d\theta}m(\theta)=1-\alpha \frac{\sin(\theta+\alpha\pi/2)}{\sin(\alpha\pi/2)}\ge 1-\frac{\alpha}{\sin(\alpha\pi/2)}>0$ for $\alpha\in (0, 1)$.
This means that $m(\theta)>0$ for $\theta\in (0, \pi)$. This completes the proof. 
\end{proof}

%%%%%%%%%%%%%%%%%%%%%%%%%%%
\section{Boundary locus technique for the numerical stability region}
\label{sec:blt}

In this section, we consider the boundary locus of the stability region for the F-DDEs model problem with the first order approximation for Caputo derivative by the well-known Gr\"{u}nwald-Letnikov formula \cite{diethelm10}. The GL scheme is  also the convolution quadrature generated by backward Euler scheme \cite{lubich1986stability}.

Consider \eqref{eq:FDDE1} and the step size $h=1/k$ for for some $k\in \mathbb{N}^{+}$. The mesh grid is given by $t_n=nh$. We will denote $y_n$ or $f_n$ to be the value defined at $t_n$. The numerical scheme for the F-DDE \eqref{eq:FDDE1} based on the GL scheme can be written as 
\begin{gather}\label{eq:FBDF1}
\mathcal{D}_h^{\alpha}y_n:=\frac{1}{h^{\alpha}}\sum_{j=0}^{n} \omega_{n-j} (y_j-y_0) = a y_{n}+ by_{n-k}, \quad n\geq 1, 
\end{gather}
where $(1-z)^{\alpha}=\sum_{j=0}^{\infty}\omega_jz^j$, and we have the explicit formula $\omega_{0}=1, \omega_{j}=\left(1-\frac{\alpha+1}{j} \right)\omega_{j-1}$ for $j\geq 1$. For the numerical solutions to exist, we require
\[
a\neq h^{-\alpha}=k^{\alpha}.
\]
In this paper, the branch cut of function $w\mapsto w^{\alpha}$
is taken to be $(-\infty, 0)$. In other words, $(re^{i\theta})^{\alpha}=r^{\alpha}e^{i\alpha\theta}$ for $\theta\in (-\pi, \pi]$.
Clearly, $y_{-k}$ is not used. When $k=1$, the data for $t<0$ are not used.  It is also worth noting that the GL formula is exactly the numerical scheme given in \cite{vcermak2020exact}.

We remark that the scaling \eqref{eq:scaling} is consistent with the numerical method \eqref{eq:FBDF1} as well due to the linearity of the GL scheme. 
In fact, one just needs to define $h=\tau/k$, $\tilde{h}=h/\tau$ for general $\tau>0$. Hence, studying \eqref{eq:FBDF1} is sufficient. As soon as the results here are obtained, the results for general $\tau$ can be obtained by simple scaling.

Parallel to Definition \ref{def:stabilitycontinuous}, we define some notions of stability for the numerical solution, following \cite{guglielmi1998delay}.
\begin{definition}
\begin{enumerate}[(1)]
\item 
For a given positive integer $k\geq 1$, define the numerical stability region $\mathcal{S}_{k}$ to be the set of pairs $(a,b)$ such that the numerical solutions with constant step size $h=1/k$ satisfy that $y_{n}\to 0~ \text{as}~ n\to +\infty$ for all initial function $\varphi$. 

\item The $\tau(0)$-stability region of a numerical method for F-DDEs is defined by  
\begin{gather}
\mathcal{S}_{\tau(0)}=\bigcap_{k\geq 1} \mathcal{S}_{k}.
\end{gather}
The numerical method is called $\tau(0)$-stable if $\mathcal{S}_{*}\subset \mathcal{S}_{\tau(0)}$.

\item If $|y_{n}|\leq C_{\alpha}t_n^{-\alpha}$ as  $n\to +\infty$, we call the numerical solutions are Mittag-Leffler stable.
\end{enumerate}
\end{definition}

The boundary locus technique is used to determine the boundary of the stability region $\mathcal{S}_k$ via the discrete Laplace transform (or equivalently the generating function) of the numerical solution. It has been widely used to determine the accurate stability region for various continuous and discrete test models \cite{bellen2013numerical, vcermak2017fractional, vcermak2020exact}.

\subsection{The discrete Laplace transform and the generating functions}

Consider the discrete function sequence $f=(f_0, f_1,  f_2,...)$ defined on the mesh points $t_n$.  Let $[\cdot]_{n}$ be the $n$-th entry of a sequence (i.e., $[f]_{n}=f_{n}$). The discrete Laplace transform is defined to be
\begin{gather}\label{eq:FLaptran}
\mathscr{L}_{h}\{f\}(s) := h\sum _{j=0}^{\infty} f_{j} (1-hs)^{j}, \quad s\in\mathbb{C},
\end{gather}
where the complex number $s$ is taken such that the the above series is convergent. 
 If the serie converges at some $s\neq h^{-1}$, then there exists $r>0$ such that it will also be convergent on the disk $D(h^{-1}, r):= \left\{z\in\mathbb{C}: |s-h^{-1}|<r \right\}$. 

\begin{remark}
Note that the definition \eqref{eq:FLaptran} is different from the discrete Laplace transform in \cite{vcermak2020exact}
because their sequence starts with $j=1$. We choose the definition \eqref{eq:FLaptran} mainly because it is consistent 
with the convolution we introduce later.
\end{remark}

The main advantage of the discrete Laplace transform defined in \eqref{eq:FLaptran} is that it has quite nice properties similar to the continuous Laplace transform, especially for the discrete fractional operator and delay function. 
To illustrate these, we consider the weighted discrete convolution $(\cdot * \cdot)_h$ for functions defined on the grid $t_n$:
\[
[(f*g)_h]_n:=h\sum_{j=0}^n f_j g_{n-j}.
\]
Note that the discrete convolution here is also different from that in \cite{vcermak2020exact} simply because starting with 
$j=0$ seems more convenient. More over, we also consider the difference operator 
\begin{gather}
\nabla_h y_n=\begin{cases}
0, & n=0,\\
h^{-1}(y_n-y_{n-1}), & n\ge 1.
\end{cases}
\end{gather}

The following important properties presented in  Lemma \ref{lem:Lappro} and Lemma \ref{lem:diswiener} are straightforward from the definitions:
\begin{lemma}
\label{lem:Lappro}
Let the functions $f$ and $g$ such that $\mathscr{L}_{h}\{f\}$ and $ \mathscr{L}_{h}\{g\}$ converge on $D(h^{-1}, r_{f})$ and  $D(h^{-1}, r_{g})$, respectively. Then one has that 
\begin{enumerate}[(i)]
\item  $\mathscr{L}_{h}\{ (f*g)_h \}(s)=\mathscr{L}_{h}\{f \}(s)\cdot \mathscr{L}_{h}\{g \}(s)$ on $D(h^{-1}, r)$, where $r=\min\{r_f, r_g\}.$

\item $\mathscr{L}_{h}\{\nabla_h f \}(s)=s\mathscr{L}_{h}\{f \}(s)- f_0$ on $D(h^{-1}, r)$.

\item $\mathscr{L}_{h}\{ \mathcal{D}_h^{\alpha}f \}(s)=s^{\alpha}\mathscr{L}_{h}\{f \}(s)-s^{\alpha-1}f_{0}$.

\item $\mathscr{L}_{h}\{ f_{d_{k}} \}(z)= (1-hs)^{k}\mathscr{L}_{h}\{f \}(s) + h\sum_{j=1}^{k}f_{-j}(1-hs)^{k-j}$, where the $k$-step delay function given by $(f_{d_{k}})_{n}=f_{n-k}$ for $n\geq 0$.
\end{enumerate}
\end{lemma}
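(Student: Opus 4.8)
The plan is to verify the four identities by direct manipulation of the defining power series \eqref{eq:FLaptran}. Throughout, work on a disk $D(h^{-1},r)$ on which all the series in sight converge absolutely; for parts (i), (ii) and (iv) one may take $r=\min\{r_f,r_g\}$ (respectively $r_f$), while for part (iii) one shrinks $r$ further so that $|1-hs|<1$ there, which makes available the two closed forms $\mathscr{L}_{h}\{1\}(s)=h\sum_{j\ge0}(1-hs)^j=1/s$ and $\mathscr{L}_{h}\{\omega\}(s)=h\sum_{j\ge0}\omega_j(1-hs)^j=h\bigl(1-(1-hs)\bigr)^{\alpha}=h^{1+\alpha}s^{\alpha}$, the last using $(1-z)^{\alpha}=\sum_j\omega_jz^j$ at $z=1-hs$, which is legitimate for $|z|<1$. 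On such a disk $s$ stays near $h^{-1}>0$, away from the branch cut $(-\infty,0)$, so $s^{\alpha}$ is unambiguous; the resulting identity then extends by analytic continuation wherever both sides are defined.

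For (i), expand $\mathscr{L}_{h}\{(f*g)_h\}(s)=h\sum_{n\ge0}\bigl(h\sum_{j=0}^nf_jg_{n-j}\bigr)(1-hs)^n$ and, using absolute convergence of the two factor series, apply the Cauchy product theorem to rearrange the double sum as $\bigl(h\sum_jf_j(1-hs)^j\bigr)\bigl(h\sum_mg_m(1-hs)^m\bigr)$. For (ii), since $(\nabla_h f)_0=0$, we get $\mathscr{L}_{h}\{\nabla_h f\}(s)=\sum_{n\ge1}(f_n-f_{n-1})(1-hs)^n$; splitting this into two sums, reindexing the second by $m=n-1$ and pulling out a factor $(1-hs)$ gives $h^{-1}\mathscr{L}_{h}\{f\}(s)-f_0-(1-hs)h^{-1}\mathscr{L}_{h}\{f\}(s)$, which simplifies to $s\mathscr{L}_{h}\{f\}(s)-f_0$.

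For (iii), the key observation is that $\mathcal{D}_h^{\alpha}f_n=h^{-\alpha-1}[(\omega*(f-f_0))_h]_n$, where $f-f_0$ denotes the sequence with entries $f_n-f_0$. Applying (i), then the closed forms above together with $\mathscr{L}_{h}\{f-f_0\}(s)=\mathscr{L}_{h}\{f\}(s)-f_0/s$, yields $\mathscr{L}_{h}\{\mathcal{D}_h^{\alpha}f\}(s)=h^{-\alpha-1}\cdot h^{1+\alpha}s^{\alpha}\cdot\bigl(\mathscr{L}_{h}\{f\}(s)-f_0/s\bigr)=s^{\alpha}\mathscr{L}_{h}\{f\}(s)-s^{\alpha-1}f_0$. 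For (iv), substitute $m=n-k$ in $\mathscr{L}_{h}\{f_{d_k}\}(s)=h\sum_{n\ge0}f_{n-k}(1-hs)^n$ to get $h(1-hs)^k\sum_{m\ge-k}f_m(1-hs)^m$, then separate the indices $m=-k,\dots,-1$ from $m\ge0$: the block $m\ge0$ contributes $(1-hs)^k\mathscr{L}_{h}\{f\}(s)$, and the negative block, after setting $j=-m$, contributes $h\sum_{j=1}^kf_{-j}(1-hs)^{k-j}$.

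There is no genuine obstacle here; the content is bookkeeping. The only points that need a modicum of care are the term-by-term rearrangement in (i), and hence in (iii), which is justified by absolute convergence on $D(h^{-1},r)$, and the choice of $r$ small enough that $|1-hs|<1$, so that the geometric and binomial series identities and the branch convention for $s^{\alpha}$ all hold simultaneously.
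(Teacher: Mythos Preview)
Your proof is correct and follows essentially the same approach as the paper. The paper itself does not give a detailed proof, stating only that the properties are ``straightforward from the definitions'' and noting for part (iii) the key identity $\mathcal{D}_h^{\alpha}y_n = h^{-1-\alpha}[(\omega*(y-y_0))_h]_n$ together with $\mathscr{L}_{h}\{\omega\}(s)=h^{1+\alpha}s^{\alpha}$; you have fleshed out exactly this sketch, with the added care of specifying the smaller disk $|1-hs|<1$ on which the binomial and geometric series identities are valid before extending by analytic continuation.
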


Note that the third property can be derived using the fact that
$\mathcal{D}_h^{\alpha}y_n= h^{-1-\alpha} [(\omega* (y-y_0))_h]_n$,
and the fact that $\mathscr{L}_{h}(\omega)=h^{1+\alpha} s^{\alpha}$, where $\omega=(\omega_0, \omega_1, \omega_2,...)$ and the coefficients are given in \eqref{eq:FBDF1}.
Note that though the definition now starts from $j=0$, the properties almost stay the same as those in \cite{vcermak2017fractional, vcermak2020exact} (the only difference happens for the Laplace transform of the delay functions).

The following important results from \cite{vcermak2017fractional, vcermak2020exact} characterize the asymptotic behavior for the discrete function in convergent region or its boundary by its discrete Laplacian transform.
We remark that though the definitions of the discrete Laplace transforms here are different from those in \cite{vcermak2017fractional, vcermak2020exact}, the properties here stay unchanged because the asymptotic behaviors will not change under reindexing.

\begin{lemma}\label{lem:diswiener}
(i) Assume that the function $f$ such that $\mathscr{L}_{h}\{f\}$  converge on $D(h^{-1}, r)$ with $r>0$.  If $r>h^{-1}$, then $f\in\ell^{1}$. Otherwise, if $r<h^{-1}$, then $\lim_{n\to\infty} |f_n|=\infty$. 

(ii) Let $g\in\ell^1$. Then there exists $f\in\ell^1$ such that $\mathscr{L}_{h}\{f \}(z)\cdot \mathscr{L}_{h}\{g \}(z)=1$ if and only if 
$$
\inf \left\{ |\mathscr{L}_{h}\{g \}(z)|: z\in \mathrm{cl} \left( D(h^{-1}, h^{-1}) \right) \right\}>0.
$$
where $\mathrm{cl}$ means the ``closure".
\end{lemma}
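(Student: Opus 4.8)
The plan is to pass to ordinary generating functions, where both parts reduce to classical facts: the Cauchy--Hadamard formula for part (i), and Wiener's lemma on absolutely convergent power series for part (ii). The device is the change of variables $w=1-hs$. Under it $\mathscr{L}_{h}\{f\}(s)=hF(w)$ with $F(w):=\sum_{j\ge 0}f_{j}w^{j}$, and since $s-h^{-1}=-w/h$, the disk $D(h^{-1},r)$ in the $s$-plane corresponds exactly to $\{w:|w|<hr\}$, so that $\mathrm{cl}(D(h^{-1},h^{-1}))$ corresponds to the closed unit disk $\overline{\mathbb{D}}=\{|w|\le 1\}$. Hence ``$\mathscr{L}_{h}\{f\}$ converges on $D(h^{-1},r)$'' means ``$F$ has radius of convergence $\ge hr$'', and ``$f\in\ell^{1}$'' means ``$F$ belongs to the Wiener algebra $A_{+}$ of functions with absolutely convergent Taylor series on $\overline{\mathbb{D}}$''.

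For part (i), I would read $r$ as the radius of convergence of the series, so that $hr$ is the radius of convergence of $F$ and $\limsup_{j}|f_{j}|^{1/j}=1/(hr)$ by Cauchy--Hadamard. If $r>h^{-1}$ then $1/(hr)<1$, so $|f_{j}|\le C\theta^{j}$ for some $\theta\in(0,1)$ and all large $j$, whence $\sum_{j}|f_{j}|<\infty$ and $f\in\ell^{1}$. If $r<h^{-1}$ then $1/(hr)>1$, so $|f_{j}|\to\infty$ along a subsequence; the stronger conclusion $\lim_{n}|f_{n}|=\infty$ is where I would invoke the singularity analysis of \cite{vcermak2017fractional,vcermak2020exact} (in the application $f$ solves a linear convolution recurrence, so its growth is governed by the dominant singularity of $\mathscr{L}_{h}\{f\}$, which now lies on $|w|=hr<1$), noting that the reindexing from $j=1$ to $j=0$ leaves all of these asymptotics untouched.

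For part (ii), write $G(w):=\sum_{j\ge 0}g_{j}w^{j}$, so $\mathscr{L}_{h}\{g\}(s)=hG(w)$ and $g\in\ell^{1}$ gives $G\in A_{+}$; in particular $G$ is continuous on the compact set $\overline{\mathbb{D}}$, so the condition $\inf\{|\mathscr{L}_{h}\{g\}(z)|:z\in\mathrm{cl}(D(h^{-1},h^{-1}))\}>0$ is equivalent to $G(w)\ne 0$ for all $|w|\le 1$. The requested identity $\mathscr{L}_{h}\{f\}\cdot\mathscr{L}_{h}\{g\}\equiv 1$ reads $h^{2}F(w)G(w)=1$, i.e. $F=1/(h^{2}G)$. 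The ``only if'' direction is direct: if such an $f\in\ell^{1}$ exists then $F\in A_{+}$ with $\sup_{|w|\le 1}|F(w)|\le\sum_{j}|f_{j}|=:M$, and the identity (extended to $\overline{\mathbb{D}}$ by continuity) gives $|G(w)|=1/(h^{2}|F(w)|)\ge 1/(h^{2}M)>0$ on $\overline{\mathbb{D}}$, so the infimum is positive. For the ``if'' direction, assuming $G$ has no zero on $\overline{\mathbb{D}}$, Wiener's lemma yields $1/G\in A_{+}$, hence $F:=1/(h^{2}G)\in A_{+}$ has summable Taylor coefficients, i.e. the sequence $f=(f_{0},f_{1},\dots)$ lies in $\ell^{1}$, and by construction $\mathscr{L}_{h}\{f\}\cdot\mathscr{L}_{h}\{g\}=h^{2}FG=1$.

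The main obstacle is the ``if'' half of part (ii), namely Wiener's lemma: a nowhere-vanishing element of $A_{+}$ has its reciprocal again in $A_{+}$. I would either cite it as it is used in \cite{vcermak2017fractional,vcermak2020exact}, or supply the short Gelfand-theoretic argument: $\ell^{1}(\mathbb{Z}_{\ge 0})$ under convolution is a commutative unital Banach algebra whose maximal ideal space is $\overline{\mathbb{D}}$, the characters being $f\mapsto F(w)$ for $|w|\le 1$, so $G$ is invertible there precisely when $w\mapsto G(w)$ does not vanish on $\overline{\mathbb{D}}$. The only other fussy point is pinning down $\lim_{n}|f_{n}|=\infty$ (rather than merely $\limsup$) in part (i), which is exactly where the reindexing remark and the cited singularity-analysis results are brought in.
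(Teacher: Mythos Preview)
The paper does not actually prove this lemma: it is stated as a result taken from \cite{vcermak2017fractional, vcermak2020exact}, with only the remark that ``though the definitions of the discrete Laplace transforms here are different\ldots the properties here stay unchanged because the asymptotic behaviors will not change under reindexing.'' Your proposal therefore goes well beyond what the paper supplies; your substitution $w=1-hs$ reducing everything to the generating function $F(w)=\sum f_jw^j$ is exactly the mechanism behind that reindexing remark, and your appeal to Cauchy--Hadamard for part (i) and to Wiener's lemma (via Gelfand theory on $\ell^1(\mathbb{Z}_{\ge 0})$) for part (ii) is the standard and correct route.

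One point you handle more carefully than the paper's bare statement deserves: in part (i), for a \emph{general} sequence with radius of convergence $hr<1$ one only gets $\limsup_n|f_n|^{1/n}>1$, not $\lim_n|f_n|=\infty$; you rightly note that the full limit requires the additional structure (linear convolution recurrence, dominant-singularity analysis) present in the applications of \cite{vcermak2017fractional,vcermak2020exact}. That caveat is appropriate and is implicitly what the paper intends by citing those references.
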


A related function is the generating function of a sequence $f=(f_0, f_1, \ldots)$ defined by 
\begin{gather}\label{eq:gf}
\mathscr{F}_f(z)=\sum_{n=0}^{\infty} f_n z^n, \quad z\in\mathbb{C}. 
\end{gather}
Clearly, the discrete Laplace transform is related to the generating function by
\begin{gather}\label{eq:laplacetogenerating}
\mathscr{L}_{h}\{f\}(s)= h\mathscr{F}_f(1-hs).
\end{gather}

The generating function forgets the underlying grid $t_n$ and is applicable for any given sequence.
Corresponding to the generating function, one may also define the discrete convolution that is unrelated to the underlying grid as well. If $f$ and $g$ are two scalar sequences with $f_{n}, g_{n}\in\mathbb{C}^{1}$, we define the discrete convolution 
\[
f*g=w,\quad w_{n}=\sum_{j=0}^{n}f_{n-j}g_{j}.
\]
It is noted that the only difference from $(\cdot * \cdot)_h$ is an $h$ factor. When $f*g=\delta_{d}$, where $\delta_{d}:=(1, 0, 0,...)$ is the convolutional identity, we call the sequence $f$ is invertible, and write the inverse $g=f^{(-1)}$.  Note that a sequence $f$ is invertible if and only if $f_{0}\neq 0$.   It is straightforward to verify that $\mathscr{F}_{f*g}(z)=\mathscr{F}_f(z)\cdot \mathscr{F}_g(z)$. 

Clearly, the discrete Laplace transform and the generating functions are the same thing in different disguise. We introduce the discrete Laplace transform simply because many of the numerical stability results in the literature are obtained by the discrete Laplace transform. The purpose to introduce generating functions is simply because many of the Tauberian type results \cite{widder41,FS90} are given in terms of generating functions (see section \ref{sec:mlstability} for more details). Below, we explain briefly how one can use the discrete Laplace transform or the generating functions to characterize the numerical stability of the discrete solutions.

Applying the discrete fractional Laplace transform to the numerical scheme \eqref{eq:FBDF1} (when $n=0$, the discrete Caputo derivative is zero), we get the Laplace transformation of the numerical solution sequence $y=(y_0, y_1, y_2,...)$ as that 
\begin{equation}\label{eq:FLaptrany}
\mathscr{L}_{h}\{y\}(s) =\left( s^{\alpha}-a-b(1-hs)^{k} \right)^{-1} \cdot \left( s^{\alpha-1}y_0-ah y_0+bh\sum _{j=1}^{k-1} y_{-j} (1-hs)^{k-j}\right).
\end{equation}
Introduce 
\begin{gather}\label{eq:characteristicfunc}
Q(s):= s^{\alpha}-a-b(1-hs)^{k}
\end{gather} 
with $h>0$ and $k\in\mathbb{N}^{+}$, which is defined to be the characteristic polynomial of the numerical scheme \eqref{eq:FBDF1}.

Now, we use zeros of $Q(s)$ and Lemma \ref{lem:diswiener} to investigate whether $|y_n|$ goes to zero or not.
The issue is that the zero of the denominator and the numerator in \eqref{eq:FLaptrany} may cancel.  
Some careful checking leads to the following key observations.  Part of them have essentially been proved in \cite{vcermak2020exact}.

\begin{proposition}\label{pro:asympbehavior}
Suppose that $a\neq h^{-\alpha}$.
\begin{enumerate}[(i)]
\item If $Q(s)$ has no root in $\mathrm{cl}(D(h^{-1}, h^{-1}))$, then $|y_n|\to 0$.
\item If $a+b\neq 0$ or $k\ge 2$, when $Q(s)$ has a root inside the open disk $D(h^{-1}, h^{-1})$, then there are some initial data $\{y_j: j\le 0\}$ that lead to $|y_n|\to \infty$. 
\item If $a+b=0$ and $k=1$, then $Q(s)=s^{\alpha}-as$ has two roots $s=0$ and $s=a^{-1/(1-\alpha)}$. In this case, $y_n=y_0\not\to 0$ in the numerical solutions. If $Q(s)$ has a root in the open disk $D(h^{-1}, h^{-1})=D(1, 1)$ (i.e., $a>2^{\alpha-1}$), then for parameters $(a', b')$ near the point $(a,b)$ with $a'+b'\neq 0$, $|y_n|\to\infty$.  Otherwise when $a\le 2^{\alpha-1}$, there is $(a', b')$ that is arbitrarily close to $(a, b)$ such that the numerical solution is stable.
\end{enumerate}
\end{proposition}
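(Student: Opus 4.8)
The plan is to analyze the three cases separately, using the correspondence \eqref{eq:FLaptrany} between the numerical solution and its discrete Laplace transform together with the Wiener-type results in Lemma \ref{lem:diswiener}. In all cases, the key point is to relate the location of the roots of $Q(s)$ in the closed disk $\mathrm{cl}(D(h^{-1},h^{-1}))$ to the $\ell^1$-summability (hence decay to zero) of the solution sequence, while carefully tracking whether a root of the denominator in \eqref{eq:FLaptrany} is cancelled by a root of the numerator.

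For part (i), I would observe that $g := (h^{-1}$ times the sequence whose discrete Laplace transform is $Q(s))$ lies in $\ell^1$ when $k=1$ (it is a finite combination of the $\ell^1$ sequence $\omega/h^{1+\alpha}$ coming from $\mathscr{L}_h\{\mathcal{D}_h^\alpha\cdot\}$, the delay shift, and the pointwise terms) and, more generally for $k\ge 1$, that $\mathscr{L}_h\{g\}(s)=Q(s)$ up to the overall normalization. If $Q$ has no zero in $\mathrm{cl}(D(h^{-1},h^{-1}))$, then $\inf\{|Q(s)|: s\in\mathrm{cl}(D(h^{-1},h^{-1}))\}>0$ by compactness, so Lemma \ref{lem:diswiener}(ii) gives an $\ell^1$ inverse; since the numerator in \eqref{eq:FLaptrany} is $s^{\alpha-1}y_0$ plus a polynomial in $(1-hs)$, whose corresponding sequence is in $\ell^1$ (note $s^{\alpha-1}$ corresponds to a sequence that is $O(n^{-\alpha})$, hence in $\ell^1$ since $\alpha>0$... more carefully, one decomposes and uses that the product with the $\ell^1$ inverse stays $\ell^1$), we conclude $y\in\ell^1$, and in particular $|y_n|\to 0$. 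The cleanest route is: $\mathscr{L}_h\{y\}(s)$ is holomorphic on a disk $D(h^{-1},r)$ with $r>h^{-1}$, so Lemma \ref{lem:diswiener}(i) applies directly.

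For part (ii), suppose $Q$ has a root $s_0$ inside the open disk $D(h^{-1},h^{-1})$. I would argue that one can choose initial data $\{y_j:j\le 0\}$ so that the numerator in \eqref{eq:FLaptrany} does not vanish at $s_0$. When $k\ge 2$ the numerator contains the free parameters $y_{-1},\dots,y_{-(k-1)}$ multiplying distinct powers $(1-hs_0)^{k-j}$, so it is a nonconstant polynomial-type expression in these free parameters and can be made nonzero at $s_0$; when $a+b\ne 0$ but $k=1$ the numerator is simply $(s^{\alpha-1}-ah)y_0$, which is nonzero at any root $s_0$ with $|s_0-h^{-1}|<h^{-1}$ provided $y_0\ne 0$ (one must check $s_0^{\alpha-1}\ne ah$; if it did equal $ah$ then $s_0$ would simultaneously satisfy $s_0^\alpha=a+b$ and $s_0^{\alpha}=as_0$, forcing a relation that, combined with $a+b\ne 0$, is incompatible with $s_0$ lying in that disk — this is the delicate subcase). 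Then $\mathscr{L}_h\{y\}(s)$ has a genuine singularity at $s_0$ inside $D(h^{-1},h^{-1})$, so its radius of convergence about $h^{-1}$ is strictly less than $h^{-1}$, and Lemma \ref{lem:diswiener}(i) forces $|y_n|\to\infty$.

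For part (iii), with $a+b=0$ and $k=1$ one has $Q(s)=s^\alpha-as$, whose zeros in the branch-cut convention are $s=0$ and $s=a^{-1/(1-\alpha)}$ (the latter only when $a>0$); the root $s=0$ is exactly cancelled by the factor $s^{\alpha-1}$ in the numerator — here I would compute $\mathscr{L}_h\{y\}(s)=(s^\alpha-as)^{-1}(s^{\alpha-1}-ah)y_0$ and simplify, noting $k=1$ kills the delay sum, to see that $y_n\equiv y_0$, hence does not tend to zero. For the perturbation statements: the nonzero root $s=a^{-1/(1-\alpha)}$ lies in the open disk $D(1,1)$ iff $|a^{-1/(1-\alpha)}-1|<1$, i.e. iff $a^{-1/(1-\alpha)}<2$, i.e. $a>2^{\alpha-1}$; when this holds the root persists (by continuity of roots of $Q$) inside $D(1,1)$ for $(a',b')$ near $(a,b)$, and once $a'+b'\ne 0$ part (ii) applies to give $|y_n|\to\infty$; when $a\le 2^{\alpha-1}$ the only root in $\mathrm{cl}(D(1,1))$ is on the boundary or absent, and a small perturbation can push all roots strictly outside, so part (i) gives stability for some nearby $(a',b')$.

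The main obstacle I anticipate is part (ii) in the borderline subcase $a+b\ne 0$, $k=1$: ruling out the coincidental cancellation $s_0^{\alpha-1}=ah$ at a root $s_0$ of $Q$ interior to the disk requires a small but genuine computation with the branch cut of $w\mapsto w^\alpha$, showing that such simultaneous equations cannot be solved inside $D(h^{-1},h^{-1})$ (or, if they can on a measure-zero set of parameters, handling it by a further perturbation argument). The bookkeeping of which sequences lie in $\ell^1$ — in particular that the sequence with generating function $(1-z)^{\alpha-1}$ decays like $n^{-\alpha}$ and is therefore summable — is routine but must be stated cleanly, since it underpins the application of Lemma \ref{lem:diswiener}(i) in parts (i) and (iii).
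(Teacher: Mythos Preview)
Your overall plan matches the paper's, but there is one genuine error and one genuine gap.

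\textbf{Part (i): the summability claim is false.} The sequence with discrete Laplace transform $s^{\alpha-1}$ (equivalently, generating function $(1-z)^{\alpha-1}$) has coefficients $\sim n^{-\alpha}/\Gamma(1-\alpha)$, and for $\alpha\in(0,1)$ this is \emph{not} in $\ell^1$. Your ``cleanest route'' also fails: the point $s=0$ lies on $\partial D(h^{-1},h^{-1})$ and $s^{\alpha-1}$ has a branch singularity there, so $\mathscr{L}_h\{y\}$ is never holomorphic on a disk $D(h^{-1},r)$ with $r>h^{-1}$, and Lemma~\ref{lem:diswiener}(i) cannot be invoked directly. The paper's argument is: by Wiener (Lemma~\ref{lem:diswiener}(ii)) the sequence with transform $1/Q(s)$ is in $\ell^1$; the numerator sequence decays like $n^{-\alpha}$ and hence lies in $\ell^p$ for some $p>1$ (in particular tends to zero); and the convolution of an $\ell^1$ sequence with a null sequence is again null, so $y_n\to 0$. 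You need this $\ell^1\ast c_0\subset c_0$ step, not $\ell^1$ membership of $y$.

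\textbf{Part (ii): the case $k\ge 2$, $b=0$ is missing.} Your free parameters $y_{-1},\dots,y_{-(k-1)}$ enter the numerator with an overall factor $b$, so when $b=0$ they disappear and your argument collapses. The paper handles this separately: then $\mathscr{L}_h\{y\}(s)=\dfrac{s^{\alpha-1}-ah}{s^{\alpha}-a}\,y_0$, and a common zero would force $s^{\alpha-1}=ah$ and $s^{\alpha}=a$ simultaneously, i.e.\ $s=h^{-1}$ and $a=h^{-\alpha}$, contradicting the hypothesis. For the $k=1$, $a+b\ne 0$ subcase you flag as delicate, the computation is in fact clean: if $s^{*}$ is the numerator zero, $s^{*\,\alpha-1}=a$ gives $s^{*}=a^{-1/(1-\alpha)}$, and then $Q(s^{*})=s^{*\,\alpha}-a-b(1-s^{*})=as^{*}-a-b+bs^{*}=(a+b)(s^{*}-1)$, which is nonzero since $a+b\ne 0$ and $a\ne 1=h^{-\alpha}$. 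So the obstruction you anticipated does not arise; no perturbation argument is needed. Part (iii) in your sketch is fine and agrees with the paper.
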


\begin{proof}
For (i), one first observe that the sequence whose discrete Laplace transform is $1/Q(s)$, which is in $\ell^1$ provided that
$Q(s)$ has no zero on $\mathrm{cl}(D(h^{-1}, h^{-1}))$. This is in fact a corollary of Lemma \ref{lem:diswiener} (ii).
Moreover, the sequence whose discrete Laplace transform is $s^{\alpha-1}y_0-ah y_0+bh\sum _{j=1}^{k-1} y_{-j} (1-hs)^{k-j}$ is in $\ell^p$ for some $p>1$. In fact, only the term $y_0 s^{\alpha-1}$ affects the asymptotic behavior, and the corresponding sequence decays like $n^{-\alpha}$. Then, $y_n$, as the convolution of these two, goes to zero.
See the proof of \cite[Theorem 3]{vcermak2020exact} for related proofs.

For (ii), when $k\ge 2$ and $b\neq 0$, there are always some initial data $\{y_j: j\le 0\}$ that can make the numerator of \eqref{eq:FLaptrany}
nonzero on the whole $D(h^{-1}, h^{-1})$. Then, a zero of the denominator in $D(h^{-1}, h^{-1})$ will lead a singularity
inside the disk. Lemma \ref{lem:diswiener} (i) implies then that $|y_n|\to \infty$.
Now suppose that $k\ge 2$ and $b=0$. Then, 
\[
\mathscr{L}_{h}\{y\}(s) =\frac{s^{\alpha-1}-ah}{s^{\alpha}-a } y_0.
\]
Since $a\neq h^{-\alpha}$, the numerator cannot have the same zero as the denominator (we recall that $s^{\beta}=r^{\beta}e^{i\beta\theta}$ for $s=re^{i\theta+2im\pi}$ $m\in \mathbb{Z}$, $\theta\in (-\pi, \pi]$).
In the case $k=1$ (and thus $h=1$),
\[
\mathscr{L}_{h}\{y\}(s) =\frac{s^{\alpha-1}-a}{s^{\alpha}-a-b(1-s)}y_0.
\]
The numerator has a zero at $s^*=a^{-\frac{1}{1-\alpha}}$.
At this point, the denominator is $a^{-\frac{\alpha}{1-\alpha}}-a-b+ba^{-\frac{1}{1-\alpha}}
=(a+b)(a^{-1/(1-\alpha)}-1)$. Since $a\neq h^{-\alpha}=1$, this is zero only if $a+b=0$. Hence, in the case of (ii), the zero of the denominator and the numerator cannot be the same. Lemma \ref{lem:diswiener} (i) gives the result.

Consider (iii), one has
\[
\mathscr{L}_{h}\{y\}(s) =\frac{s^{\alpha-1}-a}{s^{\alpha}-as}y_0.
\]
The first part of the claim is trivial. Consider $a>2^{\alpha-1}$. Then, for any $(a', b')$ that is sufficiently close
to the point $(a, b)$ with $a'+b'\neq 0$, $Q(s)$ has a zero strictly inside the disk $D(1, 1)$, while the argument for (ii)
implies that the numerator is nonzero at the same point. Hence, $|y_n|\to\infty$.
Otherwise, if $a\le 2^{\alpha-1}$, we can find $(a', b')$ that is sufficiently close to the point $(a, b)$
such that $Q(s)$ has no roots in $\mathrm{cl}(D(1, 1))$. Then, for this case $|y_n|\to 0$. 
\end{proof}

By the discussion above, we can find the parameters $(a, b)$ such that $Q(s)$ has a root on the circle $\partial D(h^{-1}, h^{-1})$. This will be the boundary of the stability region. (This is true even for the case $a+b=0$ and $k=1$. In fact, by Proposition \ref{pro:asympbehavior} (iii), the portion of $a+b=0$ with $a\le 2^{\alpha-1}$ belongs to the boundary of the numerical stability region).

One may use the generating function as well to conclude the same boundary of the stability region. In fact,
Let $f_{n}=ay_n+by_{n-k}$. Then the equation \eqref{eq:FBDF1} can be written as 
\begin{gather}\label{eq:FBDFrd}
\frac{1}{h^{\alpha}}\sum_{j=0}^{n} \omega_{n-j} (y_n-y_0) = f_{n}-f_0\delta_{n,0}, \quad n\geq 0, 
\end{gather}
where $\delta_{i,j}=1$ if $i=j$ and $\delta_{i,j}=0$ if $i\neq j$ is the usual Kronecker function. 
Taking $\mu=\omega^{(-1)}$, with the generating function $F_{\mu}(z)=(1-z)^{-\alpha}$ for the Gr\"{u}nwald-Letnikov scheme, one then obtains (see more details in \cite{LiWang2019,wang2020ML})
\begin{equation}\label{eq:Fy0}
\begin{split}
\mathscr{F}_y(z)-y_{0}(1-z)^{-1}=h^{\alpha} \left( \mathscr{F}_\mu(z)\cdot \mathscr{F}_f(z)-f_{0}\mathscr{F}_{\mu}(z) \right).
\end{split}
\end{equation}
In view of $f_{n}=ay_n+by_{n-k}$ for any $n\geq 0$ so that $\mathscr{F}_f(z)=a \mathscr{F}_y(z)+b  \left( \sum_{\ell=0}^{k-1}y_{\ell-k}z^{\ell}+z^{k}\mathscr{F}_y(z) \right)$.
Substituting this expression into \eqref{eq:Fy0} yields that 
\begin{equation}\label{eq:Fy}
\begin{split}
\mathscr{F}_y(z) &=\left( 1-h^{\alpha} \left( a +bz^{k} \right) \mathscr{F}_\mu(z) \right)^{-1} \cdot \left( y_{0}(1-z)^{-1}+h^{\alpha} \left( bg(z)-ay_0 \right)\mathscr{F}_{\mu}(z)  \right)\\
&=\left( (1-z)^{\alpha}-h^{\alpha} \left( a +bz^{k} \right)  \right)^{-1} \cdot \left( y_{0}(1-z)^{\alpha-1}+h^{\alpha} \left( bg(z)-ay_0 \right)\right)
\end{split}
\end{equation}
where $g(z)=\sum_{\ell=1}^{k-1}y_{\ell-k}z^{\ell}$.
Clearly, \eqref{eq:laplacetogenerating} is satisfied. 
The asymptotic behavior of $y_n$ can also be studied using the properties of the generating function
on some neighborhood of unit disc $D(0,1)$, which has been widely studied in \cite{FS90}. This will be useful later in section \ref{sec:mlstability}. For the boundary locus, one needs to find the parameters $(a,b)$ such that $1-h^{\alpha} \left( a +bz^{k} \right) \mathscr{F}_\mu(z)$ has zeros on the unit circle, agreeing with the claims in Proposition \ref{pro:asympbehavior}.

\subsection{Boundary locus of the numerical stability region}

We investigate the boundary locus of the numerical stability region $\mathcal{S}_k$ by finding the parameters $(a, b)$
such that $Q(s)$ has a root on the circle $\partial D(h^{-1}, h^{-1})$. The smallest such region enclosed by these parameters will be the stability region $\mathcal{S}_k$, which obviously will include $\{(a, b):  a\in (-\infty, 0), b=0\}$.

\begin{lemma}\label{lmm:charequ}
Let $\alpha\in(0,1)$, $a,b\in\mathbb{R}$, $\tau>0$ and $k\in\mathbb{N}^{+}$. 
\begin{enumerate}[(i)]
\item If $a+b\geq 0$, then the equation $Q(z)=0$ has at least one nonnegetive real root on the disc $D(h^{-1}, h^{-1})$. 
\item If $z$ is a root of $Q(z)$, then its complex conjugate $\bar{z}$ is also a root of $Q(z)$, i.e.,  $Q(\bar{z})=0$.
\end{enumerate}
\end{lemma}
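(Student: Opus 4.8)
The plan is to treat the two parts separately, and I would prove (ii) first, since conjugate symmetry of the root set is what makes the boundary locus symmetric about the real axis and is used throughout the later sections.

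For part (ii), the only thing standing between us and a one-line ``conjugate the whole equation'' argument is the branch cut of $z\mapsto z^{\alpha}$ along $(-\infty,0)$, so the first step is to rule out roots on that cut. If $z=-r$ with $r>0$ were a root, then $z^{\alpha}=r^{\alpha}e^{i\alpha\pi}$ would have to equal $a+b(1+hr)^{k}$, which is a real number; but $r^{\alpha}>0$ and $\sin(\alpha\pi)>0$ for $\alpha\in(0,1)$, so $z^{\alpha}$ has strictly positive imaginary part, a contradiction. The point $z=0$ is self-conjugate (and is a root only when $a+b=0$), so every root $z_{0}\neq 0$ of $Q$ has $\arg z_{0}\in(-\pi,\pi)$. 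On that open sector $\overline{z_{0}^{\alpha}}=\bar z_{0}^{\alpha}$, and since $w\mapsto(1-hw)^{k}$ has real coefficients, $\overline{(1-hz_{0})^{k}}=(1-h\bar z_{0})^{k}$; hence $Q(\bar z_{0})=\overline{Q(z_{0})}=0$. Once the branch-cut case is dispatched, nothing here is delicate.

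For part (i), I would work on the real diameter $J:=\mathrm{cl}(D(h^{-1},h^{-1}))\cap\mathbb{R}=[0,2h^{-1}]$, on which $1-hz\in[-1,1]$ so $Q$ restricts to a continuous, real-valued function $q(x)=x^{\alpha}-a-b(1-hx)^{k}$. Its left endpoint gives the sign we want on one side: $q(0)=-(a+b)\le 0$ by hypothesis, with equality exactly when $z=0\in\partial D(h^{-1},h^{-1})$ is already a root. By the intermediate value theorem it then suffices to exhibit a point of $J$ at which $q\ge 0$. The midpoint $x=h^{-1}$, where $1-hx=0$, gives $q(h^{-1})=h^{-\alpha}-a$, which settles the slab $a<h^{-\alpha}$ at once (recall $a\neq h^{-\alpha}$ is assumed); for the complementary range $a>h^{-\alpha}$ I would evaluate $q$ at the right endpoint $x=2h^{-1}$, where $1-hx=-1$, obtaining $q(2h^{-1})=(2h^{-1})^{\alpha}-a-(-1)^{k}b$, and then split according to the parity of $k$ and the sign of $b$.

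The main obstacle is exactly this last step: verifying that $q$ genuinely changes sign on $[0,2h^{-1}]$ in every parameter regime, and hence that the root it produces lies inside the disc rather than merely on the half-line $[0,\infty)$. Because $x\mapsto x^{\alpha}$ grows slower than the degree-$k$ polynomial part, the behaviour of $q$ near the right end of $J$ is controlled by the parity of $k$ and the sign of $b$, so an unavoidable case analysis there, together with bookkeeping of whether $\partial D(h^{-1},h^{-1})$ is attained, is where the real work sits; I expect the normalising scaling and the range of $(a,b)$ relevant to the stability question to be what finally closes it. Part (ii), by contrast, will be essentially immediate.
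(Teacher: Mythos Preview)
Your approach coincides with the paper's: for (i) the paper also restricts $Q$ to the real segment and applies the intermediate value theorem, and for (ii) it simply declares the conjugate symmetry ``obvious since all the parameters involved are real.'' Your treatment of (ii) is in fact more careful than the paper's, since you explicitly rule out roots on the branch cut $(-\infty,0)$ before conjugating; the paper does not mention this point.

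For (i), the paper's argument is shorter than yours because it never enters the case split you worry about. It evaluates only $Q(0)=-(a+b)\le 0$ and $Q(h^{-1})=h^{-\alpha}-a$, and then writes ``It follows from the assumption $a\neq h^{-\alpha}$ that $h^{\alpha}a<1$'' to conclude $Q(h^{-1})>0$. In other words, the paper silently works under $a<h^{-\alpha}$, which is the only regime needed later (the whole boundary-locus and stability discussion lives there). Your instinct that the complementary range $a>h^{-\alpha}$ requires extra work is correct, and in fact the lemma as literally stated can fail there: with $\alpha=1/2$, $k=h=1$, $a=2$, $b=-1$ one has $a+b=1\ge 0$, yet $Q(s)=s^{1/2}-s-1$ has its only roots at $s=e^{\pm 2\pi i/3}$, both at distance $\sqrt{3}>1$ from the centre $h^{-1}=1$. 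So the case analysis you anticipated cannot be closed in full generality; the intended reading is $a<h^{-\alpha}$, and under that hypothesis your midpoint evaluation $q(h^{-1})=h^{-\alpha}-a>0$ already finishes the proof with no further splitting.
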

\begin{proof}
(i) Note that if $a+b\geq 0$, then $Q(0)=-a-b\leq 0$. 
It follows from the the assumption $a\neq h^{-\alpha}$ that $h^\alpha a<1$. On the other hand, we have that 
$Q(h^{-1})=h^{-\alpha}-a>0$. Hence, we know that $Q(z)=0$ has at least one nonnegetive real root. Further, $Q(z)=0$ has the zero root if $a+b=0$ and  
$Q(z)=0$ has a positive real root if $a+b>0$.

(ii) The second is obvious since all the parameters involved are real.
\end{proof}

Consider that  $s\in \partial D(h^{-1}, h^{-1})$, which can be parameterized as $s=2h^{-1}\cos(\phi)e^{i\phi}=h^{-1}(1+e^{i2\phi})$, where $\phi\in[-\pi/2, \pi/2]$. Hence, $1-hs=-e^{i2\phi}$. Then,
\begin{equation}\label{eq:Qequ}
\begin{split}
Q(s)= \tilde{Q}(\phi)=2^{\alpha} h^{-\alpha} \cos^{\alpha}(\phi)\left( \cos(\alpha \phi)+i \sin(\alpha \phi)\right) -a-b(-1)^{k} \left( \cos(2k\phi)+i \sin(2k\phi)\right).
\end{split}
\end{equation}

By Lemma \ref{lmm:charequ} (ii), $\tilde{Q}(\phi)$ is an even function. Hence, we need to find parameters $(a, b)$ such that the following hold for some $\phi\in [0, \frac{\pi}{2}]$.
\begin{equation}\label{eq:boundaryloc}
\begin{split}
\begin{cases} 
&2^{\alpha} h^{-\alpha} \cos^{\alpha}(\phi) \cos(\alpha \phi)=a+b(-1)^{k} \cos(2k\phi),\\
&2^{\alpha} h^{-\alpha} \cos^{\alpha}(\phi) \sin(\alpha \phi) =b(-1)^{k} \sin(2k\phi).\\
\end{cases} 
\end{split}
\end{equation}

Let's discuss the equation \eqref{eq:boundaryloc} in three different cases.

{\bf{Case (I)}}: If $\phi=0$, then $a+(-1)^k b=(2/h)^{\alpha}$.
  
{\bf{Case (II)}}: If $\phi=\pi/2$, we get the curve $a+b=0$. 

{\bf{Case (III)}}: Otherwise, $\sin(2k\phi)$ can never be zero. We can solve the equation \eqref{eq:boundaryloc} to get that
\[
 a=\frac{2^{\alpha}h^{-\alpha}\cos^{\alpha}(\phi)\sin(2k\phi-\alpha\phi)}
{\sin(2k\phi)}, \quad
 b=(-1)^k2^{\alpha}h^{-\alpha}\cos^{\alpha}(\phi)\frac{\sin(\alpha\phi)}{\sin(2k\phi)}.
\]

In the third case, by setting
$\theta=k\pi-2k\phi\in [0,k\pi]$,
one has
\begin{equation}\label{eq:abformula}
 a=\frac{2^{\alpha}h^{-\alpha}\sin^{\alpha} \left(\frac{\theta}{2k} \right)\sin(\theta+\alpha(\pi/2-\theta/2k))}
{\sin(\theta)}, \quad
 b=-2^{\alpha}h^{-\alpha}\sin^{\alpha} \left( \frac{\theta}{2k} \right)\frac{\sin(\alpha\pi/2-\alpha\theta/(2k))}{\sin(\theta)}.
\end{equation}

These curves obtained are essentially the boundary locus. The smallest region enclosed by them that include $(-\infty, 0)$
will be the stability region $\mathcal{S}_k$. We perform the discussion in the next section.

\begin{remark}
Clearly if $a\le 0$, then
$|b|\ge \Re(b(-e^{i2\phi})^k)=-a+(2h^{-1})^{\alpha}\cos^{\alpha}(\phi)\cos(\alpha\phi)\ge -a.$
Hence,  $\mathcal{R}_1$ is totally contained in this region. This means the numerical solution is stable for the parameter pairs $(a, b)\in \mathcal{R}_1$. 
The crucial part is to study the numerical stability when the parameter pairs $(a, b)\in \mathcal{R}_2$. 
In order to characterize the corresponding number stability region when $(a, b)\in \mathcal{R}_2$ accurately,  we next discuss it in several cases.
\end{remark}

\section{Numerical stability regions}
\label{sec:numstab}

In this section, we perform detailed discussions on the curves found by the boundary locus technique to find an explicit formula for the numerical stability region $\mathcal{S}_k$.

\subsection{Discussion on the straight lines for $\phi=0$ in Case (I)}
\label{sec:caseI}

When $k$ is even, the line $a+(-1)^k b=(2/h)^{\alpha}=(2k)^{\alpha}$ is above $a+b=0$, which never affects the stability.
Now, we assume that $k$ is odd, which leads to the line $a-b=2^{\alpha}k^{\alpha}$ in the parameter $(a, b)$-plane. Let us examine the position of this line in relation to the continuous stability region $\mathcal{S}_*$.

\begin{lemma}\label{lmm:locuslines}
\begin{enumerate}[(i)]
\item  If $k=1$, for any $\alpha\in (0, 1)$, the line $a- b=2^{\alpha}$ lies above part of the stability region $\mathcal{S}_*$ for the continuous model \eqref{eq:FDDE1}. 

\item If $\alpha=1$ or $k=2p+1\ge 3$, the line $a- b=(2k)^{\alpha}$ lies below the continuous stability region $\mathcal{S}_*$.
\end{enumerate}
\end{lemma}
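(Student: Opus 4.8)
The plan is to compare the straight line from Case (I) with the lower boundary curve $\Gamma$ of the continuous stability region given by \eqref{eq:truestab}, using Lemma \ref{lmm:continuouscurve} as the main tool. Recall from Lemma \ref{lmm:continuouscurve} that along $\Gamma$ the quantity $\lambda(\theta) = a(\theta) - b(\theta) = \theta^{\alpha}\sin(\theta/2 + \alpha\pi/2)/\sin(\theta/2)$ is strictly decreasing in $\theta \in ((1-\alpha)\pi, \pi)$. The line $a - b = c$ (with $c = 2^{\alpha}$ when $k=1$, and $c = (2k)^{\alpha}$ when $k = 2p+1 \ge 3$) is a line of slope $1$ in the $(a,b)$-plane; the upper boundary $a + b = 0$ also has the family $a - b$ ranging over $(0, \infty)$ along it, attained at the vertex value $\lambda_{\max} := 2a_P = 2\,[(1-\alpha)\pi]^{\alpha}\sin(\alpha\pi/2)/\sin(\alpha\pi)$. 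Since $\Gamma$ meets $a+b=0$ at the vertex $P$ and $\lambda$ decreases from there, the whole region $\mathcal{S}_*$ is contained in the strip where $a - b$ ranges over $(0, \lambda_{\max}]$ (more precisely, one shows that every point of $\mathcal{S}_*$ satisfies $a - b \le \lambda_{\max}$, with the supremum attained only at $P$, by combining $a+b \le 0$ with the position of $\Gamma$). Hence the line $a - b = c$ lies entirely above $\mathcal{S}_*$ as soon as $c \ge \lambda_{\max}$, and it cuts through the interior (so it lies above only part of $\mathcal{S}_*$) when $0 < c < \lambda_{\max}$.

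So both parts reduce to comparing $c$ with $\lambda_{\max} = 2[(1-\alpha)\pi]^{\alpha}\sin(\alpha\pi/2)/\sin(\alpha\pi)$. Using $\sin(\alpha\pi) = 2\sin(\alpha\pi/2)\cos(\alpha\pi/2)$, this simplifies to $\lambda_{\max} = [(1-\alpha)\pi]^{\alpha}/\cos(\alpha\pi/2)$. For part (i), $k = 1$, I need to show $2^{\alpha} < \lambda_{\max}$, i.e. $2^{\alpha}\cos(\alpha\pi/2) < [(1-\alpha)\pi]^{\alpha}$ for all $\alpha \in (0,1)$ — equivalently $g(\alpha) := \alpha\ln 2 + \ln\cos(\alpha\pi/2) - \alpha\ln[(1-\alpha)\pi] < 0$. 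One checks the endpoints ($g(0^+) = 0$ since all terms vanish or the $\alpha\ln$ terms are controlled; $g(1^-) \to -\infty$ because $\cos(\alpha\pi/2)\to 0$) and then argues monotonicity or concavity of $g$, or bounds $\cos(\alpha\pi/2) \le 1$ and $2 < (1-\alpha)\pi/\alpha^{?}$... this is the delicate elementary estimate. For part (ii), with $\alpha = 1$ one has $\lambda_{\max} = \lim_{\alpha\to1}[(1-\alpha)\pi]^{\alpha}/\cos(\alpha\pi/2)$, which by L'Hôpital-type expansion equals $2$ (the classical DDE vertex), while $(2k)^1 = 2k \ge 6 > 2$; and for $\alpha < 1$ with $k = 2p+1 \ge 3$ one needs $(2k)^{\alpha} \ge \lambda_{\max}$, i.e. $(2k)^{\alpha}\cos(\alpha\pi/2) \ge [(1-\alpha)\pi]^{\alpha}$, which since $\cos(\alpha\pi/2)$ can be small near $\alpha=1$ requires $2k \ge (1-\alpha)\pi / \cos(\alpha\pi/2)^{1/\alpha}$; because the right-hand side is bounded (it tends to $2$ as $\alpha\to1^-$ and is $\le \pi$ at $\alpha\to0^+$, one shows it never exceeds $6$), the bound $2k \ge 6$ suffices.

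The main obstacle is the elementary but non-obvious inequality $2^{\alpha}\cos(\alpha\pi/2) < [(1-\alpha)\pi]^{\alpha}$ for part (i) (and its reverse-type companion $\sup_{\alpha\in(0,1)}[(1-\alpha)\pi]^{\alpha}/\cos(\alpha\pi/2)^{1/\alpha} \le 6$ for part (ii)). The cleanest route I would take is: set $\psi(\alpha) = [(1-\alpha)\pi]^{\alpha}\big/\cos(\alpha\pi/2)$, show via logarithmic differentiation that $\ln\psi$ has the right monotonicity/convexity so that $\psi$ stays above $2^{\alpha}$ throughout $(0,1)$; a convenient sub-step is the standard bound $\cos x \le 1 - 2x^2/\pi^2$ (or $\cos(\alpha\pi/2) \le 1-\alpha^2$... ) on $[0,\pi/2]$, which lets one replace the transcendental $\cos$ by a polynomial and reduces everything to comparing $\alpha\ln 2$ with $\alpha\ln[(1-\alpha)\pi] - \ln(1-\alpha^2)$ or similar, then finishing by a one-variable calculus check of the resulting smooth function on $(0,1)$ including its behavior at the two endpoints. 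Once the vertex comparison is in place, the geometric conclusion — "above part of $\mathcal{S}_*$" versus "below $\mathcal{S}_*$" — follows immediately from the monotonicity of $\lambda(\theta)$ along $\Gamma$ established in Lemma \ref{lmm:continuouscurve}, together with the fact that $\mathcal{R}_1 \subset \{a \le 0, b \ge 0\}$ forces $a - b \le 0$ there, so only the $\mathcal{R}_2$ portion (where $\Gamma$ lives) is relevant.
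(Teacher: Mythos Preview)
Your reduction is correct and matches the paper exactly: both parts come down to comparing $(2k)^{\alpha}$ with the vertex value
\[
\lambda_{\max}=\frac{[(1-\alpha)\pi]^{\alpha}}{\cos(\alpha\pi/2)}
=\frac{[(1-\alpha)\pi]^{\alpha}}{\sin\!\big((1-\alpha)\pi/2\big)},
\]
using the monotonicity of $\lambda(\theta)$ along $\Gamma$ from Lemma~\ref{lmm:continuouscurve}. The geometric conclusions you draw from $c<\lambda_{\max}$ versus $c\ge\lambda_{\max}$ are exactly right.

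Where your plan diverges from the paper is in the execution of the two elementary inequalities, and in both places the paper's route is shorter than what you sketch.

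For (i) you propose logarithmic differentiation of $\psi(\alpha)$ or a polynomial bound on $\cos$. The paper instead makes the substitution $x=(1-\alpha)\pi/2$ and rewrites the target as $\sin x < x^{\alpha}$. This falls out in one line: if $x>1$ then $\sin x<1<x^{\alpha}$, and if $x\le 1$ then $\sin x<x\le x^{\alpha}$ since $\alpha\in(0,1)$. No convexity or endpoint analysis is needed; your proposed machinery would work but is overkill here.

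For (ii) your observation that it suffices to show $\sup_{\alpha\in(0,1)}(1-\alpha)\pi/\cos(\alpha\pi/2)^{1/\alpha}\le 6$ is correct, but note that this supremum bound is \emph{equivalent} to the case $k=3$ of the inequality you are trying to prove, so ``one shows it never exceeds $6$'' is precisely the hard step, not something that can be disposed of by checking endpoints. The paper does not try to bound the supremum directly; instead it splits: for $k\ge 5$ the crude bound $\sin x\ge (2/\pi)x$ reduces everything to $\beta\ln\beta-(1-\beta)\ln(\pi/(2k))\ge 0$ with $\beta=1-\alpha$, whose derivative is monotone; for $k=3$ a separate convexity argument on $g(\beta)=\ln\sin(\pi\beta/2)+(\beta-1)\ln(\pi\beta/6)$ is carried out, showing $g''$ has a unique zero in $(0.8,1)$ and $g'<0$ throughout. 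Your single-supremum approach is cleaner conceptually but would in the end require essentially this same $k=3$ analysis, so you gain no real savings.
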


By Lemma \ref{lmm:continuouscurve}, one only needs to show $(2k)^{\alpha}\ge 2\frac{[(1-\alpha)\pi]^{\alpha}\sin(\alpha\pi/2)}{\sin(\alpha\pi)}
=\frac{[(1-\alpha)\pi]^{\alpha}}{\sin(\pi(1-\alpha)/2)}$ for the lines being under the stability region.
For $k=1$, one only needs to show that $2^{\alpha}<\frac{[(1-\alpha)\pi]^{\alpha}}{\sin(\pi(1-\alpha)/2)}$. The detailed calculation is an elementary calculus and we attach it in Appendix \ref{app:lines} for a reference.
Note that the line $a-b=2^{\alpha}$ for $k=1$ may not intersect with $\Gamma$: it can be above the curve totally. Here $\Gamma$ is the blow boundary curve of stability region $\mathcal{S}_*$ defined in \eqref{eq:truestab}.  
In fact, $\lim_{\theta\to\pi}\lambda(\theta)=\pi^{\alpha}\cos(\frac{\alpha\pi}{2})$. We find that when $\alpha$ is small enough (near zero), $\pi^{\alpha}\cos(\frac{\alpha\pi}{2})>2^{\alpha}$. Only when $\alpha$ is 
big enough, $\pi^{\alpha}\cos(\frac{\alpha\pi}{2})<2^{\alpha}$, in which case the line intersects with $\Gamma$. See Figure \ref{Lineposi2}.

%\tcr{Show such a figure that the line is above $\Gamma$}

Lemma \ref{lmm:locuslines} implies the numerical solution is not stable when $k=1$ for some parameters $(a, b)\in \mathcal{S}_*$. This means that the numerical solution can not be $\tau(0)$-stable for F-DDEs.
As a remark, when $\alpha\to 1$, we have
\[
\frac{[(1-\alpha)\pi]^{\alpha}}{\sin(\pi(1-\alpha)/2)}\to 2.
\]
Hence, for $\alpha=1$, the straight lines will lie below the stability region. This is consistent with the fact that the backward Euler method is  $\tau(0)$-stable for classical DDEs, whereas it is not for F-DDEs.
This is an agreement with the results obtained in the case of pure delay F-DDEs with $a=0$, a special case  studied in \cite{vcermak2020exact}.

\begin{figure}[!ht]
\begin{center}
$\begin{array}{cc}
 \includegraphics[scale=0.45]{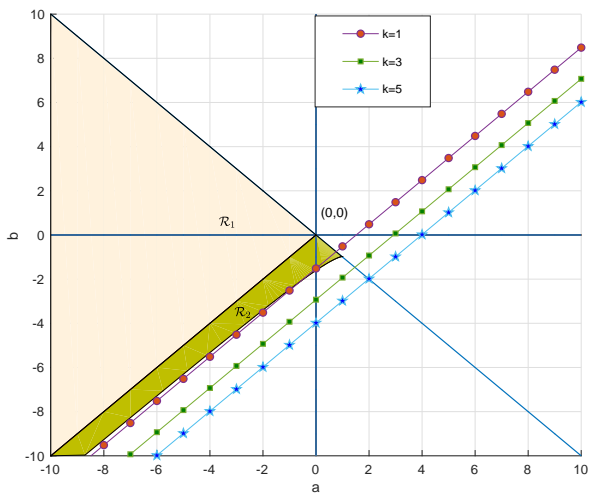}
 \end{array}$
\end{center}
\caption{Position relationship between the lines $a-b=(2k)^{\alpha}$ with $k=1, 3, 5$ and stability region  $\mathcal{S}_*$ for $\tau=1$ and $\alpha=0.6$.}
\label{Lineposi}
\end{figure}

To illustrate the results of this lemma, the position relationships between the lines $a-b=(2k)^{\alpha}$ and stability region $\mathcal{S}_*$ with different parameters are plotted in Fig. \ref{Lineposi}. It clearly shows that for $k=1$ the line intersects with $\mathcal{S}_*$ in the below boundary at one point and for $k=3, 5$ the lines lies below $\mathcal{S}_*$, as expected.

\begin{figure}[!ht]
\begin{center}
$\begin{array}{cc}
 \includegraphics[scale=0.7]{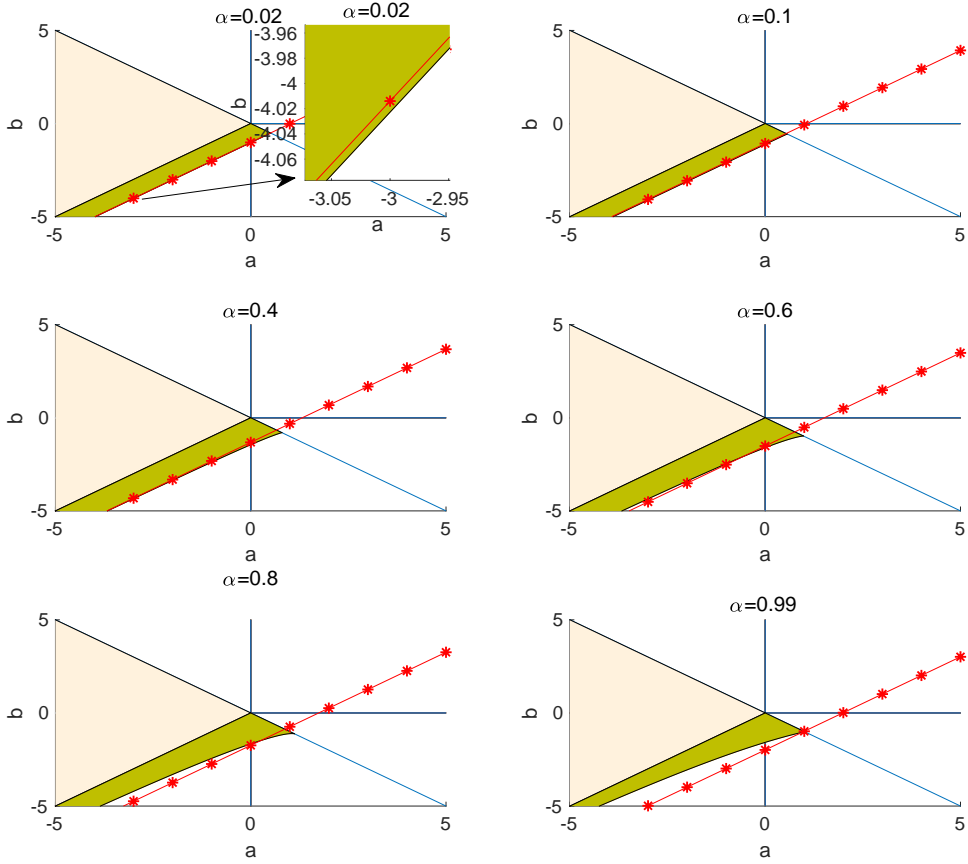}
 \end{array}$
\end{center}
\caption{Position relationship between the line $a-b=2^{\alpha}$ (red line with $*$) and stability region  $\mathcal{S}_*$ for $\tau=1$ and different order $\alpha=0.02, 0.1, 0.4, 0.6, 0.8$ and $0.99$.}
\label{Lineposi2}
\end{figure}

In order to illustrate the influence of order $\alpha$ on the numerical stability, and thus to distinguish the F-DDEs from the DDEs, the positional relationship between the line $a-b=2^{\alpha}$ and stability region $\mathcal{S}_*$ with different $\alpha$ is drawn in Fig. \ref{Lineposi2}. 
When the order $\alpha$ is very small, such as in the first the sub-figure for $\alpha=0.02$, the line $a-b=2^{\alpha}$ is totally above the curve $\Gamma$.
As the order $\alpha$ becomes larger, the line $a-b=2^{\alpha}$ intersects the curve $\Gamma$.
But as the order $\alpha$ gets larger and approaches $1^{-}$, the line slowly slides out of the stability region and finally does not intersect with $\mathcal{S}_*$. 

This shows that the numerical stability of GL for F-DDEs is very different from that of the integer DDEs, because of the change of the order $\alpha$, the numerical stability region can be reduced such that fractional backward Euler scheme becomes not $\tau(0)$-stable.

\subsection{Discussion on the parametrized curves in Case (III)}

As we have mentioned already, we only need to consider $\phi\in (0, \frac{\pi}{2})$ so that
$\theta=k\pi-2k\phi\in (0, k\pi)$ and $\sin\theta\neq 0$. 

We first consider $\theta\in ( (2m-1)\pi, 2m\pi)$ for some integer $m$, where $2m\le k$.
For $\theta$ in this range, $b>0$. Moreover, $\theta+\alpha(\pi/2-\theta/2k)\in \left[ (2m-1)\pi, 2m\pi+\frac{\alpha \pi}{2} \right)$ so $a$ goes from positive value to negative value. 
By noting the curve defined by the expression in \eqref{eq:abformula}, if it crosses $a+b=0$, it must happen for 
\begin{equation*}
\sin(\theta+\alpha(\pi/2-\theta/2k))-\sin(\alpha(\pi/2-\theta/2k))=0,
\end{equation*}
which means
\[
\theta+\alpha(\pi/2-\theta/2k)-\alpha(\pi/2-\theta/2k)=2m\pi.
\]
This is clearly impossible. Hence, the curve never intersects with $a+b=0$ and lies strictly above $a+b=0$. Hence, such curves cannot be the boundary of the numerical stability region (as $a+b=0$ is already the upper boundary).

Suppose $\theta\in (2m\pi, (2m+1)\pi)$ with $2m+1\le k$. Then, $\theta+\alpha(\pi/2-\theta/2k)\in \left( 2m\pi, \min \left\{(2m+1)\pi+\frac{\alpha \pi}{2}, k\pi \right\} \right)$.
In this case, $b<0$. The parameter $a$ either stays positve or changes from positive to negative.
This curve defined in in \eqref{eq:abformula} intersects $a+b=0$ at
$
\theta=\frac{(2m+1-\alpha)k\pi}{k-\alpha}\in (2m\pi, (2m+1)\pi].
$
Hence, we have a family of curves denoted by $\Gamma_m$ $\left( m=0,1,2,\cdots, \lfloor\frac{k-1}{2}\rfloor \right)$:
\begin{equation}\label{eq:gammcurve}
\Gamma_m:~~ \left\{
\begin{split}
& a=2^{\alpha}h^{-\alpha}\sin^{\alpha} \left( \frac{\theta}{2k} \right) \frac{ \sin(\theta+\alpha(\pi/2-\theta/2k))}
{\sin(\theta)},\\
& b=-2^{\alpha}h^{-\alpha}\sin^{\alpha} \left( \frac{\theta}{2k} \right)\frac{\sin(\alpha\pi/2-\alpha\theta/(2k))}{\sin(\theta)},
\end{split}
\quad\quad \theta\in \left[ \frac{(2m+1-\alpha)k\pi}{k-\alpha}, (2m+1)\pi \right).
\right.
\end{equation}

Now let's present our main results. That is, an accurate description of the numerically stable region.

\begin{theorem}
\label{thm:numericalregion}
Fix $k$ to be a positive integer. When $k=1$, the numerical stability region $\mathcal{S}_k$ in the $(a, b)$-plane lies in the region between $a+b=0$ and $a-b=2^{\alpha}$.
When $k\ge 2$, the numerical stability region $\mathcal{S}_k$ in the $(a, b)$-plane lies between $a+b=0$
and the curve $\Gamma_0$:
\begin{equation}\label{eq:main}
\begin{split}
\Gamma_0:\quad \left\{
\begin{split}
& a=2^{\alpha}k^{\alpha}\sin^{\alpha} \left( \frac{\theta}{2k} \right) \frac{\sin(\theta+\alpha(\pi/2-\theta/2k))}
{\sin(\theta)},\\
& b=-2^{\alpha}k^{\alpha}\sin^{\alpha} \left( \frac{\theta}{2k} \right) \frac{\sin(\alpha\pi/2-\alpha\theta/(2k))}{\sin(\theta)},
\end{split}\right.
\quad\quad
\theta\in \left( \frac{1-\alpha}{1-\alpha/k}\pi, \pi \right).
\end{split}
\end{equation}
\end{theorem}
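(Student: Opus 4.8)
The plan is to read off $\mathcal{S}_k$ from the location of the zeros of the characteristic function $Q(s)=s^{\alpha}-a-b(1-hs)^{k}$ relative to the closed disk $\mathrm{cl}(D(h^{-1},h^{-1}))$, exploiting the boundary locus already computed in Section \ref{sec:blt}. By Proposition \ref{pro:asympbehavior} (recall $a\neq h^{-\alpha}$, so that \eqref{eq:FBDF1} is solvable), $y_n\to 0$ whenever $Q$ has no zero in $\mathrm{cl}(D(h^{-1},h^{-1}))$, while a zero in the open disk $D(h^{-1},h^{-1})$ forces $|y_n|\to\infty$ for suitable data, provided $a+b\neq 0$ or $k\geq 2$ (the case $a+b=0,\ k=1$ being settled directly by Proposition \ref{pro:asympbehavior}(iii)). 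Since $Q$ is continuous on this compact disk and analytic in its interior (which lies in the open right half-plane and so avoids the branch cut $(-\infty,0]$), its zeros vary continuously with $(a,b)$ and can cross the circle $\partial D(h^{-1},h^{-1})$ only when $(a,b)$ lies on the boundary locus $\mathcal{C}$: the Case I line, the line $a+b=0$ of Case II, and the Case III curves \eqref{eq:abformula}. Hence the number of zeros of $Q$ inside the open disk is constant on each connected component of $\mathbb{R}^2\setminus\mathcal{C}$; on the negative $a$-axis $\{a<0,\,b=0\}$ the equation $s^{\alpha}=a$ has no root at all under the chosen branch of $w\mapsto w^{\alpha}$, so the component $\mathcal{U}$ of $\mathbb{R}^2\setminus\mathcal{C}$ containing that axis has zero count $0$ and lies in $\mathcal{S}_k$. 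The task is then to show that $\mathcal{U}$ is exactly the region claimed and that $\mathcal{S}_k$ adds nothing beyond it (up to boundary points).

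First, the upper boundary of $\mathcal{U}$ is the line $a+b=0$: by Lemma \ref{lmm:charequ}(i), $a+b\geq 0$ already forces a nonnegative real zero of $Q$ in $\mathrm{cl}(D(h^{-1},h^{-1}))$, so $\mathcal{S}_k\subset\{a+b\leq 0\}$ and no locus curve above $a+b=0$ can bound $\mathcal{U}$. This discards the Case I line when $k$ is even (it then reads $a+b=(2k)^{\alpha}>0$) as well as the Case III arcs with $\theta\in((2m-1)\pi,2m\pi)$, which were seen above to lie strictly above $a+b=0$. A sum-to-product manipulation of \eqref{eq:abformula}, in the spirit of the proof of Lemma \ref{lmm:continuouscurve}, yields the convenient forms
\[
a(\theta)+b(\theta)=2^{\alpha}h^{-\alpha}\sin^{\alpha}\!\big(\tfrac{\theta}{2k}\big)\frac{\cos\!\big(\tfrac{\theta}{2}+\alpha(\tfrac{\pi}{2}-\tfrac{\theta}{2k})\big)}{\cos(\theta/2)},\qquad a(\theta)-b(\theta)=2^{\alpha}h^{-\alpha}\sin^{\alpha}\!\big(\tfrac{\theta}{2k}\big)\frac{\sin\!\big(\tfrac{\theta}{2}+\alpha(\tfrac{\pi}{2}-\tfrac{\theta}{2k})\big)}{\sin(\theta/2)},
\]
from which one reads off that on each Case III arc the point $(a(\theta),b(\theta))$ drops below $a+b=0$ exactly for $\theta\geq\tfrac{(2m+1-\alpha)k\pi}{k-\alpha}$, fixing the left endpoints of the $\Gamma_m$. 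The remaining candidates for the lower boundary of $\mathcal{U}$ are therefore $\Gamma_0,\dots,\Gamma_{\lfloor(k-1)/2\rfloor}$, together with the line $a-b=(2k)^{\alpha}$ when $k=2p+1\geq 3$.

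The heart of the proof is to show that $\Gamma_0$ is the outermost of these candidates, so that $\mathcal{U}$ is bounded from below by $\Gamma_0$ alone. For the Case I line this follows from Lemma \ref{lmm:locuslines}(ii), which places it below $\mathcal{S}_*$, together with a short check — using the $a-b$ formula above and the monotonicity already established in Lemma \ref{lmm:continuouscurve} — that $\Gamma_0$ lies above it. For the higher arcs $\Gamma_m$ with $m\geq 1$, I would study the maps $\theta\mapsto(a(\theta),b(\theta))$ on the intervals $\big[\tfrac{(2m+1-\alpha)k\pi}{k-\alpha},(2m+1)\pi\big)$, verify that each $\Gamma_m$ is a simple arc, and prove that the arcs $\Gamma_m$ ($m\geq 1$) are nested strictly below $\Gamma_0$, so that none re-enters the region between $a+b=0$ and $\Gamma_0$. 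Granting this, the remaining boundary of $\mathcal{U}$ is precisely $\Gamma_0$: crossing $\Gamma_0$ downward, a complex-conjugate pair of zeros of $Q$, at $s=h^{-1}(1+e^{\pm i2\phi})$ with $\phi=(k\pi-\theta)/(2k)$, enters the open disk, so the portion of the plane below $\Gamma_0$ has zero count $\geq 2$ and is unstable; together with $\mathcal{S}_k\subset\{a+b\leq 0\}$ this gives $\mathcal{S}_k=\mathcal{U}$ up to boundary. Setting $h^{-\alpha}=k^{\alpha}$ and $\tfrac{(1-\alpha)k\pi}{k-\alpha}=\tfrac{1-\alpha}{1-\alpha/k}\pi$ then identifies $\mathcal{U}$ with the region in \eqref{eq:main}. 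I expect this nesting/ordering step for the arcs $\Gamma_m$, and the comparison of $\Gamma_0$ with the Case I line, to be the main obstacle: it is a purely trigonometric estimate on $a(\theta),b(\theta)$ over explicit $\theta$-intervals, not a formal consequence of the lemmas already proved.

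Finally, for $k=1$ the arc $\Gamma_0$ degenerates, since the $\theta$-interval in \eqref{eq:gammcurve} collapses to the empty set, so $\mathcal{C}=\{a+b=0\}\cup\{a-b=2^{\alpha}\}$. If $a+b<0$ and $a-b>2^{\alpha}$, then $Q(0)=-(a+b)>0$ whereas $Q(2)=2^{\alpha}-(a-b)<0$; since $(0,2)\subset D(1,1)$ the function $Q$ has a zero in the open disk and $(a,b)\notin\mathcal{S}_1$ by Proposition \ref{pro:asympbehavior}(ii). If $a+b=0$ and $a-b>2^{\alpha}$, then $a>2^{\alpha-1}$ and $y_n\equiv y_0\not\to 0$ by Proposition \ref{pro:asympbehavior}(iii). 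With $\mathcal{S}_1\subset\{a+b\leq 0\}$ from Lemma \ref{lmm:charequ}(i), this gives $\mathcal{S}_1\subset\{a+b\leq 0\}\cap\{a-b\leq 2^{\alpha}\}$, while the interior of this wedge is contained in $\mathcal{S}_1$ by the constancy of the zero count on $\mathcal{U}$; this is the stated description of $\mathcal{S}_1$.
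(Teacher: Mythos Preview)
Your outline matches the paper's approach exactly: boundary locus plus a proof that among the candidate lower boundaries (the Case~I line and the arcs $\Gamma_0,\dots,\Gamma_{\lfloor(k-1)/2\rfloor}$) it is $\Gamma_0$ that sits outermost. Your treatment of $k=1$ via the intermediate value theorem is cleaner than the paper's, and your zero-counting/continuity framing is fine. But you have correctly identified, and left open, the only substantive step: showing $\Gamma_m$ ($m\ge 1$) and the Case~I line lie below $\Gamma_0$. This is not a mere trigonometric estimate one can push through by brute force, and the paper's way of closing it is worth knowing.

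For the Case~I line $a-b=(2k)^{\alpha}$ (odd $k\ge 3$), your reference to Lemma~\ref{lmm:locuslines}(ii) is a red herring: that lemma compares the line to the \emph{continuous} region $\mathcal{S}_*$, not to $\Gamma_0$. What you actually need is the second clause of Lemma~\ref{lem:decrecruv}: using your displayed formula for $a(\theta)-b(\theta)$ along $\Gamma_0$, one checks that this quantity is decreasing and always exceeds $(2k)^{\alpha}$, so $\Gamma_0$ lies above the line. For the arcs $\Gamma_m$, the paper does \emph{not} attempt a direct monotonicity or nesting argument. Instead it introduces an auxiliary one-parameter family of curves $\gamma(t;\beta)$ (equation~\eqref{eq:gammac}) that thread through the points of $\Gamma_0,\Gamma_1,\dots$ at parameters $\theta=\beta,\beta+2\pi,\dots$; Lemma~\ref{lem:lamd} shows that along each $\gamma$-curve the quantity $a-b$ increases with $m$ (so points on $\Gamma_m$ sit lower), Lemma~\ref{lem:points} shows the vertices $A_m=\Gamma_m\cap\{a+b=0\}$ are ordered, and Lemma~\ref{lem:observation} shows distinct $\gamma$-curves do not cross inside $|a|+b<0$. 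An intersection $\Gamma_m\cap\Gamma_0$ in $|a|+b<0$ would then force two distinct $\gamma$-curves to meet there, a contradiction. This device---foliating the half-plane below $a+b=0$ by the $\gamma$-curves---is the missing idea you would need to complete your plan.
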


\begin{figure}[!ht]
\begin{center}
$\begin{array}{cc}
 \includegraphics[scale=0.5]{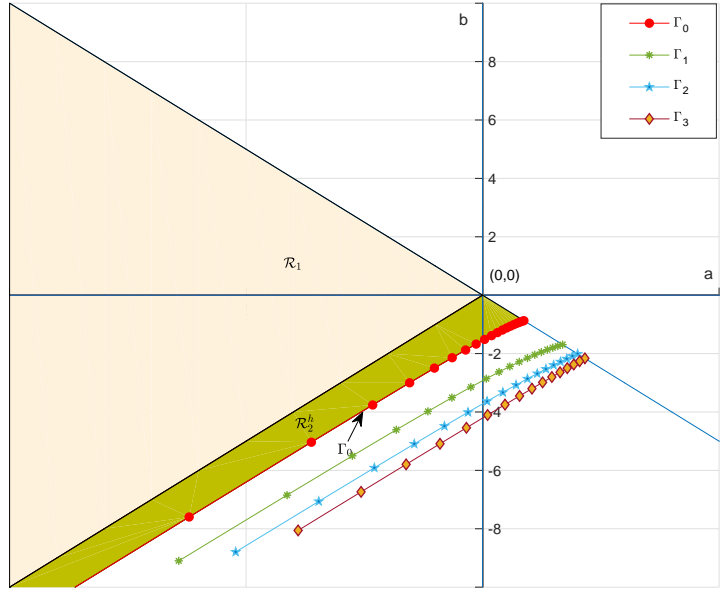}
 \end{array}$
\end{center}
\caption{The curves  $\Gamma_0, \Gamma_1, \Gamma_2$ and $\Gamma_3$ for $\tau=1$, $h=0.1, k=10$ and $\alpha=0.5$.}
\label{Numsta}
\end{figure}

The curves $\Gamma_0, \Gamma_1, \Gamma_2$ and $\Gamma_3$ for $\tau=1$, $h=0.1, k=10$ and $\alpha=0.8$ are plotted in Fig. \ref{Numsta}. It shows that 
the curve $\Gamma_0$ is above other curves $\Gamma_m$ for $m\geq 1$.

To prove Theorem \ref{thm:numericalregion}, we need some auxiliary lemmas.   

\begin{lemma}\label{lem:points}
The curves $\Gamma_m$ defined in \eqref{eq:gammcurve} intersects the line $a+b=0$
at the points with
\[
a_m=\frac{2^{\alpha}k^{\alpha}\cos^{\alpha} \left( \frac{(k-2m-1)\pi}{2(k-\alpha)} \right)}
{2\cos \left( \frac{\alpha\pi(k-2m-1)}{2(k-\alpha)} \right)}.
\]
They are increasing with respect to $m$ for $\alpha\in (0, 1)$ and for $\alpha=1$, $a_0<a_1=a_2=\cdots$.
\end{lemma}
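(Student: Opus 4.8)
The plan is to obtain the closed-form expression for $a_m$ by substituting the intersection value $\theta = \theta_m := \frac{(2m+1-\alpha)k\pi}{k-\alpha}$ into the parametrization \eqref{eq:gammcurve}, and then to analyze monotonicity in $m$ by treating $m$ as a continuous parameter. First I would record that at $\theta = \theta_m$ one has $\theta_m/(2k) = \frac{(2m+1-\alpha)\pi}{2(k-\alpha)}$, hence $\pi/2 - \theta_m/(2k) = \frac{\pi}{2}\cdot\frac{(k-\alpha) - (2m+1-\alpha)}{k-\alpha} = \frac{(k-2m-1)\pi}{2(k-\alpha)}$. Writing $\psi_m := \frac{(k-2m-1)\pi}{2(k-\alpha)}$, the half-angle $\theta_m/(2k) = \pi/2 - \psi_m$, so $\sin(\theta_m/(2k)) = \cos(\psi_m)$. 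Since on $\Gamma_m$ the intersection with $a+b=0$ forces $a_m = -b_m$, and $a - b = 2a$ there, I would use the simplification already employed in Lemma \ref{lmm:continuouscurve}: along these curves $a(\theta) - b(\theta) = \frac{2^\alpha h^{-\alpha}\sin^\alpha(\theta/2k)\sin(\theta/2 + \alpha(\pi/2 - \theta/2k))}{\sin(\theta/2)}$ up to the appropriate $k$-dependent bookkeeping. The cleanest route is simply to evaluate $a(\theta_m)$ directly: $\sin(\theta_m) = \sin\big(k\pi - 2k\phi\big)$ and $\theta_m + \alpha(\pi/2 - \theta_m/2k) = \theta_m + \alpha\psi_m$; after a product-to-sum manipulation of $\sin(\theta_m + \alpha\psi_m)/\sin(\theta_m)$ one extracts the factor $\frac{1}{2\cos(\alpha\psi_m)}$, and combined with $\sin^\alpha(\theta_m/2k) = \cos^\alpha(\psi_m)$ and $h^{-\alpha} = k^\alpha$ this yields exactly $a_m = \frac{2^\alpha k^\alpha \cos^\alpha(\psi_m)}{2\cos(\alpha\psi_m)}$. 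I expect the trigonometric bookkeeping here — keeping careful track of which multiple of $\pi$ appears in $\sin(\theta_m)$ so the ratio reduces correctly — to be the fiddly (though routine) part.

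For the monotonicity claim, I would define $g(x) := \frac{2^\alpha k^\alpha \cos^\alpha(x)}{2\cos(\alpha x)}$ for $x$ ranging over the relevant interval. Note that as $m$ increases from $0$ to $\lfloor (k-1)/2\rfloor$, the quantity $\psi_m = \frac{(k-2m-1)\pi}{2(k-\alpha)}$ \emph{decreases} (it is affine-decreasing in $m$), and $\psi_m \in [0, \psi_0]$ with $\psi_0 = \frac{(k-1)\pi}{2(k-\alpha)} < \pi/2$ for $\alpha \in (0,1)$, so both $\cos(x)$ and $\cos(\alpha x)$ are positive on this range and $g$ is well defined and positive. It then suffices to show $g$ is \emph{decreasing} in $x$ on $[0, \pi/2)$, because then $m \mapsto a_m = g(\psi_m)$ is increasing (decreasing function of a decreasing function). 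Taking the logarithmic derivative, $\frac{d}{dx}\ln g(x) = -\alpha\tan(x) + \alpha\tan(\alpha x) = \alpha\big(\tan(\alpha x) - \tan(x)\big)$, which is $\le 0$ on $[0,\pi/2)$ since $\tan$ is increasing there and $\alpha x \le x$; it is strictly negative for $x > 0$. Hence $g$ is strictly decreasing on $(0, \pi/2)$, giving $a_0 < a_1 < \cdots$ for $\alpha \in (0,1)$.

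For the boundary case $\alpha = 1$, I would observe that $g(x)$ degenerates to $\frac{2k\cos(x)}{2\cos(x)} = k$ for all $x$, so $a_m = k$ independently of $m$ for $m \ge 1$; but one must handle $m=0$ separately because when $\alpha=1$ the interval of definition in \eqref{eq:gammcurve} for $\Gamma_0$ collapses ($\frac{1-\alpha}{1-\alpha/k}\pi \to 0$) and the relevant endpoint differs, yielding $a_0 < a_1 = a_2 = \cdots = k$; this matches the stated conclusion $a_0 < a_1 = a_2 = \cdots$. Concretely I would plug $\alpha = 1$, $\theta_0 = 0^+$ into the limiting form of the parametrization and check $\lim_{\theta\to 0^+} a(\theta) = \lim_{\theta\to 0}\frac{2k\sin(\theta/2k)\sin(\theta/2)}{\sin(\theta)} \cdot(\text{correction}) < k$, where the strict inequality comes from the fact that $\Gamma_0$ in the classical case meets $a+b=0$ at the known vertex $a_0 = (2k)\cdot\frac{1}{2} \cdot (\text{something} < 1)$ rather than at $k$. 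The main obstacle I anticipate is not any single estimate but rather getting the trigonometric identity in the first step exactly right — in particular verifying that the ratio $\sin(\theta_m + \alpha\psi_m)/\sin(\theta_m)$ collapses to $\pm\frac{1}{2\cos(\alpha\psi_m)}$ with the correct sign so that $a_m > 0$ — and correctly isolating the $\alpha = 1$ endpoint behavior for $\Gamma_0$, since the interval of validity is $\alpha$-dependent and shrinks in that limit.
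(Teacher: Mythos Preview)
Your approach is essentially the same as the paper's: both introduce the angle $\psi_m=\dfrac{(k-2m-1)\pi}{2(k-\alpha)}$ (the paper calls it $\phi$), substitute the intersection parameter into the curve to obtain $a_m=\dfrac{2^{\alpha}k^{\alpha}\cos^{\alpha}(\psi_m)}{2\cos(\alpha\psi_m)}$, and then show that $g(x)=\cos^{\alpha}(x)/\cos(\alpha x)$ is decreasing on $[0,\pi/2)$ via the logarithmic derivative, which together with the fact that $\psi_m$ decreases in $m$ gives the monotonicity. Your computation $\frac{d}{dx}\ln g(x)=\alpha(\tan(\alpha x)-\tan(x))\le 0$ is exactly what the paper has in mind when it says ``taking the derivative on $\ln(\cos^{\alpha}(\phi)/\cos(\alpha\phi))$, we find that this is decreasing.''

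One small correction on the $\alpha=1$ discussion: the interval for $\Gamma_0$ does not collapse when $\alpha=1$; it becomes $[0,\pi)$. The real reason $m=0$ is special is that $\psi_0=\dfrac{(k-1)\pi}{2(k-\alpha)}\to\pi/2$ as $\alpha\to 1$, so the closed-form $a_0=\dfrac{2^{\alpha}k^{\alpha}\cos^{\alpha}(\psi_0)}{2\cos(\alpha\psi_0)}$ becomes the indeterminate $0/0$ and must be evaluated as a limit (which gives $a_0=1<k=a_1=a_2=\cdots$). For $m\ge 1$ one has $\psi_m<\pi/2$ strictly even at $\alpha=1$, and the formula directly yields $a_m=k$. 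With that clarification your sketch is complete and matches the paper.
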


\begin{proof}
Let $\phi:=\frac{(k-2m-1)\pi}{2(k-\alpha)}$. In view of the expression in \eqref{eq:gammcurve}, we find
$
a=-b=\frac{2^{\alpha}k^{\alpha}\cos^{\alpha}(\phi)}
{2\cos(\alpha\phi)}.
$
Taking  the derivative on $\ln (\cos^{\alpha}(\phi)/\cos(\alpha \phi))$, we find that this is decreasing on $[0, \pi/2]$.
Clearly, as $m$ increases, $\phi/2$ decreases on $(0, \pi/2)$.
Hence, $a$ increases. 
This means $m=0$ corresponds to the largest $a$ or highest intersection point.

The claim for $\alpha=1$ is obvious, which can be checked directly.
\end{proof}

For the family of curves $\Gamma_m$ defined in \eqref{eq:gammcurve}, $\Gamma_0$ plays an important role. We study the important properties 
for  $\Gamma_0$.

\begin{lemma}\label{lem:decrecruv}
For the curve $\Gamma_0$, the quantity
\[
\lambda_k(\theta):=a(\theta)-b(\theta)=2^{\alpha}k^{\alpha}\sin^{\alpha} \left( \frac{\theta}{2k} \right)
\frac{\sin \left(\theta/2+\alpha(\frac{\pi}{2}-\frac{\theta}{2k}) \right)}{\sin(\theta/2)},
\quad \theta\in \left( \frac{(1-\alpha)k\pi}{k-\alpha}, \pi \right),
\]
is decreasing for $k\ge 2$.
Moreover, for any $k\ge 2$, the straight line determined by $a-b=2^{\alpha}k^{\alpha}$ is below
the curve $\Gamma_0$.   
\end{lemma}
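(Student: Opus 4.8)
The plan is to mimic the proof of Lemma \ref{lmm:continuouscurve}, which handled the continuous curve $\Gamma$, and then to extract the monotonicity consequence for the line $a-b=2^\alpha k^\alpha$. First I would use elementary trigonometric identities (the same sum-to-product manipulation that turns \eqref{eq:truestab} into the expression for $\lambda(\theta)$ in Lemma \ref{lmm:continuouscurve}) to rewrite $\lambda_k(\theta)=a(\theta)-b(\theta)$ in the compact form displayed in the statement, namely with $\sin(\theta/2)$ in the denominator and $\sin(\theta/2+\alpha(\pi/2-\theta/(2k)))$ in the numerator. Since on the relevant interval $\theta\in\left(\tfrac{(1-\alpha)k\pi}{k-\alpha},\pi\right)$ we have $\theta/2\in(0,\pi/2)$ and the argument $\theta/2+\alpha(\pi/2-\theta/(2k))$ lies strictly between $0$ and $\pi$, the quantity $\lambda_k(\theta)$ is positive, so it suffices to show $\frac{d}{d\theta}\ln\lambda_k(\theta)<0$.

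Next I would differentiate the logarithm:
\[
\frac{d}{d\theta}\ln\lambda_k(\theta)=\frac{\alpha}{\theta}
+\tfrac12\left(1-\tfrac{\alpha}{k}\right)\cot\!\left(\tfrac{\theta}{2}+\alpha\!\left(\tfrac{\pi}{2}-\tfrac{\theta}{2k}\right)\right)
-\tfrac12\cot\!\left(\tfrac{\theta}{2}\right).
\]
Combining the two cotangent terms over a common denominator and using the product-to-sum formula, the bracketed expression becomes a single fraction whose numerator is a combination of $\sin$ of the difference of the two angles (which is a multiple of $\theta$) and a term proportional to $\alpha/k$; the denominator is $2\sin(\theta/2)\sin\!\left(\theta/2+\alpha(\pi/2-\theta/(2k))\right)>0$. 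Clearing the positive denominator reduces the claim to a single scalar inequality of the form $m_k(\theta)>0$ for $\theta\in\left(\tfrac{(1-\alpha)k\pi}{k-\alpha},\pi\right)$, where $m_k$ plays the role of the auxiliary function $m(\theta)$ in Lemma \ref{lmm:continuouscurve}. I would then check $m_k$ at (or near) the left endpoint and show $m_k'(\theta)$ has a sign that keeps $m_k$ positive; as in Lemma \ref{lmm:continuouscurve} one expects to bound a term like $\alpha\sin(\cdot)/\sin(\alpha\pi/2)$ from above by $\alpha/\sin(\alpha\pi/2)<1$, forcing the derivative to be of one sign. Note that the continuous case $k=\infty$ is recovered by letting $k\to\infty$, and for finite $k\ge 2$ the extra factor $1-\alpha/k\in(1/2,1)$ only helps, which is why the lemma excludes $k=1$; I would keep track of exactly where $k\ge 2$ is used, since at $k=1$ the interval and the bound degenerate.

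For the second assertion, I would argue as follows. The right endpoint $\theta\to\pi^-$ of $\Gamma_0$ is the point where $\Gamma_0$ meets the straight line of Case (I) for $k$ odd: plugging $\theta=\pi$ into $\lambda_k$ gives $\lambda_k(\pi)=2^\alpha k^\alpha\sin^\alpha(\pi/(2k))\cdot\frac{\sin(\pi/2+\alpha(\pi/2-\pi/(2k)))}{\sin(\pi/2)}$, and one checks this equals exactly $2^\alpha k^\alpha$ when... actually more cleanly, I would identify $\theta=\pi$ with $\phi=0$ in Case (I), so the line $a-b=(2/h)^\alpha=2^\alpha k^\alpha$ is precisely the tangent-type line $a-b=\lambda_k(\theta)$ at the endpoint $\theta=\pi$ (for $k$ even it is $a+b$, which is irrelevant here). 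Since $\lambda_k$ is decreasing, $\lambda_k(\theta)>\lambda_k(\pi)=2^\alpha k^\alpha$ for all $\theta$ in the open interval, i.e. the line through each point $(a(\theta),b(\theta))$ of slope one has intercept strictly larger than that of the line $a-b=2^\alpha k^\alpha$; geometrically (exactly as in Lemma \ref{lmm:continuouscurve}) this means every point of $\Gamma_0$ lies strictly above the line $a-b=2^\alpha k^\alpha$, which is the claim. The main obstacle I anticipate is the first assertion — specifically, verifying that the reduced scalar inequality $m_k(\theta)>0$ holds on the full interval $\left(\tfrac{(1-\alpha)k\pi}{k-\alpha},\pi\right)$ uniformly in $k\ge 2$ and $\alpha\in(0,1)$; the endpoint analysis and the sign of $m_k'$ require the same kind of careful but elementary estimates as in Lemma \ref{lmm:continuouscurve}, and I would relegate the detailed computation to an appendix.
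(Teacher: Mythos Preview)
Your overall strategy for the first assertion --- take the logarithmic derivative and reduce to a scalar inequality --- is the same as the paper's, but your execution has a concrete error and the subsequent plan does not quite match what is needed. The first term of $\frac{d}{d\theta}\ln\lambda_k(\theta)$ is not $\alpha/\theta$: the factor in $\lambda_k$ is $\sin^{\alpha}(\theta/(2k))$, not $\theta^{\alpha}$, so the correct first term is $\tfrac{\alpha}{2k}\cot(\theta/(2k))$. This matters because, after the correct computation (with $\phi=\theta/2$), the paper splits the middle cotangent term as $(1-\alpha/k)\cot(A)=\cot(A)-\tfrac{\alpha}{k}\cot(A)$ and uses $\cot x-\cot y=\sin(y-x)/(\sin x\sin y)$ to reduce the sign question to
\[
\frac{\alpha\,\sin\!\bigl(\phi+\tfrac{\alpha\pi}{2}-\tfrac{\alpha\phi}{k}-\tfrac{\phi}{k}\bigr)}{\sin\!\bigl(\tfrac{\alpha\pi}{2}-\tfrac{\alpha\phi}{k}\bigr)}
\;\le\;
\frac{k\sin(\phi/k)}{\sin\phi}.
\]
The left side is bounded above by $1/\cos(\phi/k)$ (take $\alpha\to 1$ in $\alpha/\sin(\alpha(\pi/2-\phi/k))$), and the decisive step for the right side is the concavity inequality $\sin\phi\le \tfrac{k}{2}\sin(2\phi/k)$, valid precisely for $k\ge 2$; this is where $k\ge 2$ enters, not through a factor $1-\alpha/k\in(1/2,1)$ as you suggest. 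Your proposed route via an auxiliary function $m_k$ with $m_k(0)=0$ and $m_k'>0$ (mimicking Lemma~\ref{lmm:continuouscurve}) does not obviously go through once the correct derivative is in place, because the $\alpha/\theta$ term that made that trick work is absent.

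Your argument for the second assertion has a genuine gap. The identification ``$\theta=\pi$ corresponds to $\phi=0$ in Case~(I)'' is wrong: from $\theta=k\pi-2k\phi$, the value $\phi=0$ corresponds to $\theta=k\pi$, not $\theta=\pi$. In fact $\lambda_k(\pi)=2^{\alpha}k^{\alpha}\sin^{\alpha}(\pi/(2k))\cos\!\bigl(\alpha(\pi/2-\pi/(2k))\bigr)<2^{\alpha}k^{\alpha}$, so your endpoint claim is false. More importantly, your inequality points the wrong way: ``the line $a-b=c$ is below $\Gamma_0$'' means every point of $\Gamma_0$ satisfies $a-b<c$, i.e.\ $\lambda_k(\theta)<2^{\alpha}k^{\alpha}$, not $>$. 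Since $\lambda_k$ is decreasing, its maximum is at the \emph{left} endpoint (the vertex $(a_0,-a_0)$ on $a+b=0$), and the paper finishes by checking $2a_0=2^{\alpha}k^{\alpha}\cos^{\alpha}(\phi_1)/\cos(\alpha\phi_1)\le 2^{\alpha}k^{\alpha}$ there.
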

The proof of this lemma involves some nontrivial elementary calculation and is not central to the main result, so we defer it to Appendix \ref{app:Gamma0}.

The following lemma lays the foundation to studying the relative positions for the curves $\Gamma_m$.
The basic idea is to consider how the line $a-b=\mathrm{const}$ that goes through a specified point on $\Gamma_m$ corresponding to  $\theta$ changes when $\theta$ is increased by $2\pi$. Note that increasing $\theta$ by $2\pi$ corresponds to jumping from one point on $\Gamma_m$ to a point on  $\Gamma_{m+1}$ if $\theta+2\pi$ is a legal parameter for $\Gamma_{m+1}$.
\begin{lemma}\label{lem:lamd}
Consider again
$\lambda_k(\theta):=a(\theta)-b(\theta)$
for $(a, b)$ on the parametric curves $\Gamma_m$ defined in \eqref{eq:gammcurve}. Then, it holds that
\[
\lambda_k(\beta+2m\pi) >\lambda_k(\beta)
\]
 for $\beta\in \left[ \frac{(1-\alpha)k\pi}{k-\alpha}, \pi \right)$ and $\beta+2m\pi \in \left[ \frac{(2m+1-\alpha)k\pi}{k-\alpha}, (2m+1)\pi \right)$.
\end{lemma}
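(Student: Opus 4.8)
The plan is to compare $\lambda_k(\beta+2m\pi)$ with $\lambda_k(\beta)$ by splitting off the factor $\sin^\alpha(\cdot/(2k))$ and the trigonometric quotient, and showing that each contributes in the right direction when $\theta$ is increased by $2\pi$. Recall from Lemma \ref{lem:decrecruv} that
\[
\lambda_k(\theta)=2^{\alpha}k^{\alpha}\sin^{\alpha}\!\left(\frac{\theta}{2k}\right)\frac{\sin\!\left(\theta/2+\alpha(\pi/2-\theta/(2k))\right)}{\sin(\theta/2)}.
\]
First I would write $\theta=\beta+2m\pi$ and observe that $\sin(\theta/2)=\sin(\beta/2+m\pi)=(-1)^m\sin(\beta/2)$, while $\sin(\theta/2+\alpha(\pi/2-\theta/(2k)))$ picks up the same sign $(-1)^m$ shift in its argument only partially, because $\alpha\theta/(2k)$ is not an integer multiple of $\pi$. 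So I would rewrite $\lambda_k(\beta+2m\pi)$ as
\[
\lambda_k(\beta+2m\pi)=2^{\alpha}k^{\alpha}\sin^{\alpha}\!\left(\frac{\beta+2m\pi}{2k}\right)\frac{\sin\!\left(\beta/2+\alpha\pi/2-\alpha\beta/(2k)-\alpha m\pi/k+m\pi\right)}{(-1)^m\sin(\beta/2)},
\]
and then compare term-by-term with $\lambda_k(\beta)$.

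The second step is to handle the amplitude factor: since $\beta\ge 0$ and $(\beta+2m\pi)/(2k)\le ((2m+1)\pi)/(2k)\le \pi/2$ on the stated parameter range (using $2m+1\le k$), the sine is increasing there, so $\sin^\alpha((\beta+2m\pi)/(2k))\ge\sin^\alpha(\beta/(2k))$; this gives one factor $\ge 1$. The third step is the trigonometric quotient: I would set $\psi:=\beta/2+\alpha\pi/2-\alpha\beta/(2k)$, so that $\lambda_k(\beta)=2^\alpha k^\alpha\sin^\alpha(\beta/(2k))\sin(\psi)/\sin(\beta/2)$, and I need
\[
\frac{\sin(\psi-\alpha m\pi/k+m\pi)}{(-1)^m\sin(\beta/2)}\;\ge\;\frac{\sin(\psi)}{\sin(\beta/2)},
\]
i.e. $(-1)^m\sin(\psi-\alpha m\pi/k+m\pi)\ge\sin(\psi)$, i.e. $\sin(\psi-\alpha m\pi/k)\ge\sin(\psi)$ after using $(-1)^m\sin(x+m\pi)=\sin(x)$. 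Since $\psi>\alpha\pi/2\ge 0$ and, using $\beta+2m\pi<(2m+1)\pi$ and $2m+1\le k$, one can check $\psi-\alpha m\pi/k$ and $\psi$ both lie in an interval where $\sin$ is monotone decreasing (or where the inequality can be read off directly); decreasing the argument by $\alpha m\pi/k>0$ then increases the sine. Combining the amplitude factor $\ge 1$ with $\sin(\psi)>0$ on this range and the quotient inequality yields $\lambda_k(\beta+2m\pi)\ge\lambda_k(\beta)$, and strictness follows since at least one of the two inequalities is strict for $m\ge 1$.

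The main obstacle will be pinning down the monotonicity interval for $\sin$ in the third step: I must verify that $\psi=\beta/2+\alpha(\pi/2-\beta/(2k))$ stays in $(\pi/2,\pi)$ (or in some interval on which $\sin$ is decreasing and $\sin(\psi-\alpha m\pi/k)\ge\sin(\psi)$ is clean) for all admissible $\beta$ and $m$, using precisely the constraint $\beta\ge(1-\alpha)k\pi/(k-\alpha)$ which makes $\psi\ge\pi/2$, and the constraint $\beta+2m\pi<(2m+1)\pi$ together with $2m+1\le k$ which bounds $\psi$ from above. This is where the lower endpoint $\frac{(1-\alpha)k\pi}{k-\alpha}$ in the hypothesis is doing real work, so I expect the bookkeeping there — rather than any deep idea — to be the delicate part; the rest is a sign-tracking argument of the same flavor as the proof of Lemma \ref{lmm:continuouscurve}.
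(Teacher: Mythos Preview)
Your splitting strategy is different from the paper's and can be made to work, but there is a real gap in the monotonicity step as you have outlined it. You correctly reduce to showing $\sin(\psi-\alpha m\pi/k)\ge\sin(\psi)$ with $\psi=\beta/2+\alpha(\pi/2-\beta/(2k))$, and you correctly note that the lower bound $\beta\ge\frac{(1-\alpha)k\pi}{k-\alpha}$ gives $\psi\ge\pi/2$. But that alone is \emph{not} enough: at $\beta=\frac{(1-\alpha)k\pi}{k-\alpha}$ one has $\psi=\pi/2$ exactly, and then $\sin(\psi-\alpha m\pi/k)=\cos(\alpha m\pi/k)<1=\sin(\psi)$, so the quotient inequality fails. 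What you actually need is $\psi-\alpha m\pi/k\ge\pi/2$, and this comes from the \emph{lower} bound on $\beta+2m\pi$, not from the lower bound on $\beta$ or from the upper bound you mention. A direct computation shows that $\beta+2m\pi\ge\frac{(2m+1-\alpha)k\pi}{k-\alpha}$ is \emph{equivalent} to $\psi-\alpha m\pi/k\ge\pi/2$; once you use this, both $\psi$ and $\psi-\alpha m\pi/k$ lie in $[\pi/2,\pi)$ and your sine-monotonicity argument goes through.

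For comparison, the paper does not split the two factors. It sets $g(t)=\sin^{\alpha}(t)\,\sin(\beta/2+\alpha\pi/2-\alpha t)/\sin(\beta/2)$ and shows $g$ is increasing on $[0,(\beta+2m\pi)/(2k)]$ by computing $\frac{d}{dt}\ln g=\alpha[\cot t-\cot(\beta/2+\alpha\pi/2-\alpha t)]$, which is positive whenever $t<\beta/2+\alpha\pi/2-\alpha t$; it then verifies $(\beta+2m\pi)/(2k)\le\beta/2\le\frac{\beta/2+\alpha\pi/2}{1+\alpha}$ using the same lower constraint on $\beta+2m\pi$. The paper's route is more forgiving because it does not require the second factor $\sin(\beta/2+\alpha\pi/2-\alpha t)$ to be monotone on its own; your route is more elementary (no derivatives) but needs the sharper range estimate $\psi-\alpha m\pi/k\ge\pi/2$, which, happily, the hypothesis supplies exactly.
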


\begin{proof}
As in Lemma \ref{lem:decrecruv}, we find 
\[
\lambda_k(\beta+2m\pi)
=2^{\alpha}k^{\alpha}\sin^{\alpha} \left( \frac{\beta+2m\pi}{2k} \right)
\frac{\sin \left(\beta/2+m\pi+\alpha(\frac{\pi}{2}-\frac{\beta+2m\pi}{2k}) \right)}{\sin(\beta/2+m\pi)}
\]
Since $\frac{\sin \left( \beta/2+m\pi+\alpha \left( \frac{\pi}{2}-\frac{\beta+2m\pi}{2k} \right) \right)}{\sin(\beta/2+m\pi)}
=\frac{\sin\left( \beta/2+\alpha \left( \frac{\pi}{2}-\frac{\beta+2m\pi}{2k} \right) \right)} {\sin(\beta/2)}$, it suffices to show the following function is increasing:
\[
g(t)=\sin^{\alpha}(t)\frac{\sin \left( \beta/2+\alpha\frac{\pi}{2}-\alpha t \right)}{\sin(\beta/2)},
\quad t\in \left[ 0, \frac{\beta+2m\pi}{2k} \right].
\]
We find
$\frac{d}{dt}\ln g=\alpha[ \cot(t)-\cot(\beta/2+\alpha\pi/2-\alpha t)]$.
Note that $\beta/2<\pi/2$ and $t< \pi/2$, for this derivative to be positive, we need
$t<\beta/2+\alpha\pi/2-\alpha t$, or equivalently $t\le \frac{\beta/2+\alpha\pi/2}{1+\alpha}$. Hence, it suffices to show that  $\frac{\beta+2m\pi}{2k}\le \frac{\beta/2+\alpha\pi/2}{1+\alpha}$.

Noting that $\beta+2m\pi \ge \frac{(2m+1-\alpha)}{1-\alpha/k}\pi$, one then has
 $\beta \ge \frac{(1-\alpha)\pi+2m\alpha\pi/k}{1-\alpha/k}\ge \frac{2m\pi}{k-1}$.
In fact, this holds for both $\alpha=0$ and $\alpha=1$ ($2m+1\le k$). Because the expression here is monotone in $\alpha$, it holds for all $\alpha\in (0, 1)$.
Consequently, $\frac{\beta+2m\pi}{2k}\le \beta/2\le \frac{\beta/2+\alpha\pi/2}{1+\alpha}$ since $\beta< \pi/2$. Hence, the claim is verified.
\end{proof}

Lemma \ref{lem:lamd} tells us that the straight line $a-b=\lambda_m$ becomes lower if we move from $\Gamma_0$ to $\Gamma_m$ for the corresponding parameters.  Consider the points on $\Gamma_m$ corresponding to $\theta+2m\pi$ as $m$ changes. 
Making them continuous, we shall have a family of parametrized curves with fixed $k$:
\begin{equation}\label{eq:gammac}
\begin{split}
\gamma(t; \beta):=\left( \frac{2^{\alpha}k^{\alpha}\sin^{\alpha}(t)\sin(\beta+\alpha\pi/2-\alpha t)}{\sin(\beta)},~
\frac{-2^{\alpha}k^{\alpha}\sin^{\alpha}(t)\sin(\alpha\pi/2-\alpha t)} {\sin(\beta)} \right),
%\beta\in  \left( \frac{1-\alpha}{1-\alpha/k}\pi ,\pi\right).
\end{split}
\end{equation}
where $\beta\in  \left( \frac{1-\alpha}{1-\alpha/k}\pi ,\pi\right)$.
Along these curves, one sees the points on $\Gamma_0$, $\Gamma_1, \cdots, \Gamma_m$ 
consecutively, and they become lower and lower. The intersections of family $\Gamma_m$ ($0\leq m\leq 3$) and some typical $\gamma$ curves are shown in Fig. \ref{gamma}. Note that $\gamma$ curve may only intersect some $\Gamma_m$'s. This means that
if we increase the angle by $2m\pi$, the parameter may fall out of $\left[ \frac{(2m+1-\alpha)k\pi}{k-\alpha}, (2m+1)\pi \right)$.

Lemma \ref{lem:lamd} is not enough to imply that $\Gamma_0$ is above $\Gamma_m$. We must need these curves to be well-ordered so that they may be used to show that $\Gamma_0$ is above $\Gamma_m$. The following observation fulfills this task.

\begin{figure}[!ht]
\begin{center}
$\begin{array}{cc}
 \includegraphics[scale=0.6]{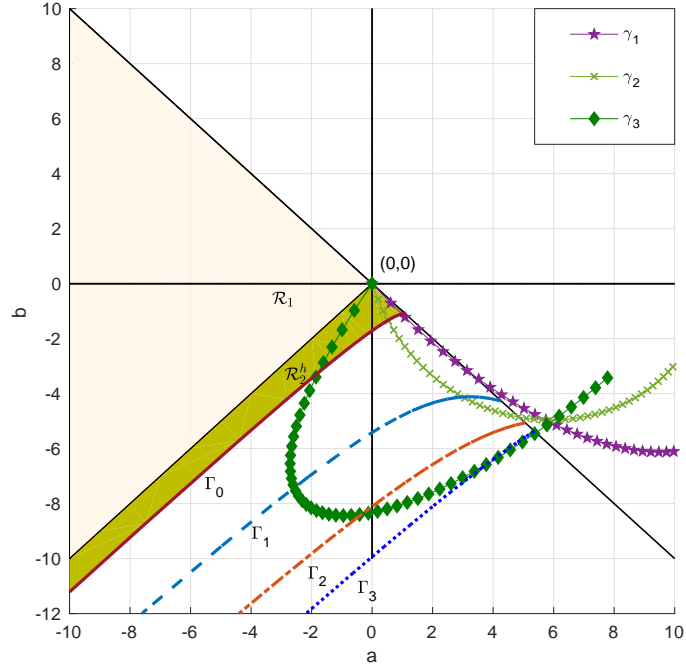}
 \end{array}$
\end{center}
\caption{Illustration of the curves $\gamma$. Here, $\gamma_1=\gamma(t; 0.3\pi), \gamma_2=\gamma(t; 0.5\pi) , \gamma_3=\gamma(t; 0.8\pi)$ for $\tau=1$, $h=0.1, k=10$ and $\alpha=0.8$.}
\label{gamma}
\end{figure}

\begin{lemma}\label{lem:observation}
For any $\beta_1\neq \beta_2$, $\gamma(t; \beta_1)$ does not intersect
$\gamma(t; \beta_2)$ in the region $|a|+b<0$.
\end{lemma}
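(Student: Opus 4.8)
The plan is to show that a given point $(a_0,b_0)$ in the region $\Omega:=\{|a|+b<0\}$ can lie on the curve $\gamma(\cdot;\beta)$ for at most one value of $\beta$; equivalently, two distinct curves of the family are disjoint inside $\Omega$. First I would introduce convenient coordinates. For a point of $\gamma(\cdot;\beta)$ set $s=\beta/2$, $\mu=\alpha(\pi/2-t)$ and $\psi=s+\mu=\beta/2+\alpha\pi/2-\alpha t$. Applying sum-to-product to \eqref{eq:gammac} turns the two components into
\[
a+b=\frac{2^{\alpha}k^{\alpha}\sin^{\alpha}(t)\cos\psi}{\cos s},\qquad
a-b=\frac{2^{\alpha}k^{\alpha}\sin^{\alpha}(t)\sin\psi}{\sin s}.
\]
One checks that $\Omega$ is exactly $\{a-b>0\}\cap\{a+b<0\}$, and that on $\Omega$ one necessarily has $\beta\in(0,\pi)$ (hence $s\in(0,\pi/2)$), $t\in(0,\pi/2)$, $\mu\in(0,\alpha\pi/2)$, and, from the signs of the two identities, $\sin\psi>0$ and $\cos\psi<0$, i.e. $\psi\in(\pi/2,\pi)$.

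Next I would reduce to the injectivity of one scalar function. Dividing the two identities gives $\tan\psi=\tan(s)/R$ with $R:=\tfrac{a+b}{a-b}<0$. For a fixed $R<0$ and each $s\in(0,\pi/2)$ this determines a unique $\psi=\Psi_R(s)\in(\pi/2,\pi)$, hence $t=\pi/2-(\Psi_R(s)-s)/\alpha=:T_R(s)$, and the second identity reads $a-b=2^{\alpha}k^{\alpha}G_R(s)$ where $G_R(s):=\sin^{\alpha}(T_R(s))\sin(\Psi_R(s))/\sin s$. Thus it suffices to show that $G_R$ is strictly monotone on $D_R:=\{s\in(0,\pi/2):T_R(s)\in(0,\pi/2)\}$, the set of $s$ for which the associated point lies in $\Omega$. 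Differentiating $\tan\psi=\tan(s)/R$ and simplifying with $R=\tan s/\tan\psi$ gives the clean identity $\Psi_R'(s)=\sin(2\psi)/\sin(2s)$, which is negative on $D_R$ (since $2\psi\in(\pi,2\pi)$ and $2s\in(0,\pi)$); hence $T_R'(s)=(1-\Psi_R'(s))/\alpha>0$, so $T_R$ is strictly increasing, $D_R$ is an open interval, and every $s\in D_R$ indeed corresponds to a point of $\Omega$. Then, if $\gamma(\cdot;\beta_1)$ and $\gamma(\cdot;\beta_2)$ both pass through $(a_0,b_0)\in\Omega$, both $s_i=\beta_i/2$ lie in $D_{R_0}$ and solve $G_{R_0}(s)=(a_0-b_0)/(2^{\alpha}k^{\alpha})$, whence $s_1=s_2$ by strict monotonicity, i.e. $\beta_1=\beta_2$.

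The heart of the argument is therefore the monotonicity of $G_R$. Using $\cot t=\tan(\mu/\alpha)$, the identity $\Psi_R'=\sin2\psi/\sin2s$, and the usual sum-to-product formulas, a direct computation of $\frac{d}{ds}\ln G_R$ collapses to
\[
\frac{d}{ds}\ln G_R(s)=\frac{-2\sin\mu}{\sin(2s)\,\cos(\mu/\alpha)}\,\sin\!\Big(\beta+(1+\alpha)\big(\tfrac{\pi}{2}-t\big)\Big),
\]
with $\mu=\alpha(\pi/2-t)$ evaluated at the given $s$. On $D_R$ we have $\sin\mu>0$, $\sin(2s)>0$ and $\cos(\mu/\alpha)=\sin t>0$, so only the sign of $\sin\big(\beta+(1+\alpha)(\pi/2-t)\big)$ remains. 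Rewriting its argument as $2\psi+(1-\alpha)(\pi/2-t)$, it exceeds $\pi$ because $\psi>\pi/2$ and $(1-\alpha)(\pi/2-t)>0$; and it stays below $2\pi$ because $\beta<\pi$ and $(1+\alpha)(\pi/2-t)<(1+\alpha)\pi/2<\pi$, using $\alpha<1$ strictly. Hence the argument lies in $(\pi,2\pi)$, the sine is negative, so $\frac{d}{ds}\ln G_R>0$ and $G_R$ is strictly increasing on $D_R$, which closes the proof.

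I expect the main obstacle to be the derivative computation together with the accompanying sign analysis: carrying the simplification of $\frac{d}{ds}\ln G_R$ down to the single-sine form above, and verifying that the auxiliary range conditions $s\in(0,\pi/2)$, $t\in(0,\pi/2)$, $\mu\in(0,\alpha\pi/2)$, $\psi\in(\pi/2,\pi)$ really do persist along all of $D_R$ so that the sign is constant there. This last sign estimate is also the unique point at which $\alpha<1$ is used in an essential way, consistent with the fact (noted in Lemma \ref{lem:points}) that the intersection points coalesce when $\alpha=1$.
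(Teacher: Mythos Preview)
Your proposal is correct, and the overall strategy coincides with the paper's: both foliate the region $|a|+b<0$ by half-lines through the origin and show that along each such line the intersection with $\gamma(\cdot;\beta)$ moves strictly monotonically in $\beta$. The difference lies purely in the bookkeeping. The paper parametrises the lines by $\lambda=-a/b\in(-1,1)$, works directly in the variable $\beta$, and proves monotonicity of $-b$ along the line; the key step there is the inequality $\alpha\,\dfrac{dt}{d\beta}>\tfrac12$, obtained by differentiating $\cot(\alpha\pi/2-\alpha t)=(\lambda-\cos\beta)/\sin\beta$ and reducing to $(1-\lambda)^2>0$. You instead pass to $s=\beta/2$, $\psi=s+\alpha(\pi/2-t)$ and the ratio $R=(a+b)/(a-b)$ (the same foliation, reparametrised), and prove monotonicity of $a-b$; your derivative collapses to a single sine factor $\sin\!\big(2\psi+(1-\alpha)(\pi/2-t)\big)$ whose argument is pinned in $(\pi,2\pi)$ by easy range estimates. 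Your computation is slicker at the endpoint, at the cost of a bit more notational setup; the paper's is more direct but hides the geometry slightly behind the $\alpha\,dt/d\beta>\tfrac12$ estimate. Both use $\alpha<1$ in exactly one place, consistent with the degeneration at $\alpha=1$ noted in Lemma~\ref{lem:points}.
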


\begin{proof}
Consider the line in the $(a, b)$-plane determined by
$
a/(-b)=\lambda\in (-1, 1).
$
We first observe that $a=-\lambda b$ intersects $\gamma(t; \beta)$
at at most one point.  In fact, at the intersection point,
\[
\frac{\sin(\beta+\alpha\pi/2-\alpha t)}{\sin(\alpha\pi/2-\alpha t)}
=\cos\beta+\sin(\beta)\cot(\alpha\pi/2-\alpha t)=\lambda.
\]
This implicitly defines a function $t=t(\beta)$ and we find
$\frac{\lambda-\cos\beta}{\sin\beta}=\frac{\cos(\alpha\pi/2-\alpha t)}{\sin(\alpha\pi/2-\alpha t)}$
which implies that 
$\alpha\frac{dt}{d\beta}=\sin^2 \left( \frac{\alpha \pi}{2}-\alpha t\right)
\frac{1-\lambda\cos\beta}{\sin^2\beta}>0.$
Hence, there is at most one $t$ that makes the equation hold so that there is at most one intersection. 

To show the curves $\gamma(t; \beta)$ for different $\beta$ values do not intersect in the region $|a|+b<0$,
it suffices to show that the intersections with $a=-\lambda b$ are monotone in $\beta$ for any $\lambda$ (the lines $a=-\lambda b$ can cover all the region $|a|+b<0$ as $\lambda$ varies).
For fixed $\lambda$, to show the intersections are monotone, we show $b$ is monotone in $\beta$, or
\[
h(\beta)=-b/(2^{\alpha}k^{\alpha})=\frac{\sin^{\alpha}(t)\cos(\alpha\pi/2-\alpha t)}{\lambda-\cos\beta}
\]
is strictly monotone.

The derivative of the function $\ln (h(\beta))$ reduces to check the sign of 
\[
\alpha\frac{dt}{d\beta} \cos\left(\frac{\alpha \pi}{2}-\alpha t-t\right)-\sin (t) \sin\left( \frac{\alpha \pi}{2}-\alpha t\right)
=\left( \alpha \frac{dt}{d\beta}-\frac{1}{2} \right) \cos \left( \frac{\alpha \pi}{2}-\alpha t-t\right)
+\frac{1}{2}\cos\left( \frac{\alpha \pi}{2}+(1-\alpha) t\right).
\]
Note that $\frac{\alpha \pi}{2}+t(1-\alpha)\in (0, \pi/2]$, so the second term is nonnegative.
If we can show $\alpha \frac{dt}{d\beta}> \frac{1}{2}$, then it is done.
By the relation above,
$\frac{\alpha\pi}{2}-\alpha t=\cot^{-1} \left( \frac{\lambda-\cos\beta}{\sin \beta} \right)$.
Direct computation shows
\[
-\alpha \frac{d t}{d\beta}=-\frac{1}{1+[(\lambda-\cos\beta)/\sin\beta]^2} \left( \frac{\lambda-\cos\beta}{\sin\beta}\right)'
=-\frac{1-\lambda\cos\beta}{1+\lambda^2-2\lambda\cos\beta}<-\frac{1}{2}.
\]
The last inequality is equivalent to
$2(1-\lambda\cos\beta)>1+\lambda^2-2\lambda\cos\beta$, which is clearly true.
\end{proof}

As shown in Fig. \ref{gamma}, the curves can intersection outside the region $|a|+b<0$.
However, since we only care about $\Gamma_m$ in the region $|a|+b<0$, the result above suffces. We can now prove the theorem.
\begin{proof}[Proof of Theorem \ref{thm:numericalregion}]
If $k=1$, there is only $\Gamma_0$ curve. 
Since $\theta+\alpha \left( \frac{\pi}{2}-\frac{\theta}{2k} \right)\in (0, \pi)$, $a>|b|>0$
The curve never intersects $a+b=0$. Hence, the curve $\Gamma_0$ cannot 
be the boundary of the numerical stability region. Only the straight line $a-b=2^{\alpha}$ is the lower boundary.

Now, assume $k\ge 2$. According to Lemma \ref{lem:decrecruv}, we only need to show that
$\Gamma_0$ is above $\Gamma_m$ for $m=2,\cdots, \lfloor\frac{k-1}{2}\rfloor$.
By Lemma \ref{lem:points}, the starting point is monotone, it reduces to checking that 
$\Gamma_m$ never intersects $\Gamma_0$ in $|a|+b<0$.

Suppose otherwise $\Gamma_m$ intersects $\Gamma_0$
at some point $(a_*, b_*)$ satisfying that $|a_*|+b_*<0$.
Then, this point corresponds to some $\theta_m\in \left( \frac{(2m+1-\alpha)\pi}{1-\alpha/k}, (2m+1)\pi \right)$ for $\Gamma_m$ and 
some $\beta_1\in [ \frac{(1-\alpha)k\pi}{k-\alpha}, \pi )$ for $\Gamma_0$.
It can be seen easily that $\beta_2:=\theta_m-2m\pi\in \left( \frac{1-\alpha}{1-\alpha/k}\pi, \pi \right)$.
By Lemma \ref{lem:lamd}, we have $\lambda(\theta_m)>\lambda(\beta_2)$.
This means that the following point
\[
\left( \bar{a}, \bar{b} \right)=
2^{\alpha}k^{\alpha} \left( \frac{\sin^{\alpha} \left( \frac{\beta_2}{2k}\right)\sin(\beta_2+\alpha(\pi/2-\beta_2/2k))}
{\sin(\beta_2)},~ -\sin^{\alpha} \left( \frac{\beta_2}{2k}\right)\frac{\sin(\alpha\pi/2-\alpha\beta_2/(2k))}{\sin(\beta_2)} \right)
\]
 on $\Gamma_0$ is different from $(a_*, b_*)$. Hence, we have
 $ \beta_2\neq \beta_1.$
However, as we have seen,
 $\gamma \left( \frac{\beta_2+2m\pi}{2k}; \beta_2\right)=(a_*, b_*)=\gamma(\beta_1/2k; \beta_1).$
This means $\gamma(t; \beta_1)$ and $\gamma(t; \beta_2)$ intersects 
in $|a|+b<0$. This contradicts to Lemma \ref{lem:observation}.
Hence, the claim is true.
\end{proof}

\section{No $\tau(0)$-stability}
\label{sec:Notau}

As we have seen, when $k=1$ and $\alpha<1$, the straight line intersects
the continuous boundary curve. This means the numerical solution can be unstable for some parameters $(a, b)\in\mathcal{S}_{*}$. However, the region $\mathcal{R}_1$ is fine, or $|b|+a<0$. 
The trouble happens for some parameters in $\mathcal{R}_{2}$. See the definition for $\mathcal{R}_{1}$ and $\mathcal{R}_{2}$ in \eqref{eq:D1D2}. 
We define the lower part of the numerically stable region as $\mathcal{R}_{2}^h$ with lower boundary curve as $\Gamma_0$, that is, the original lower boundary curve $\Gamma$ is replaced by $\Gamma_0$ in $\mathcal{R}_{2}$; See Figure \ref{Numsta}.
Examples for that $(a, b)\in\mathcal{S}_{*}$ but results unstable numerical solutions includes $(0, b)$ with $-\frac{[(1-\alpha)\pi]^{\alpha}}{2\sin(\pi(1-\alpha)/2)}<b<-2^{\alpha-1}$.
Hence, the analysis for $k=1$ already indicates that the method is not $\tau(0)$-stable. 
As we know, $k=1$ is special since the data for $t<0$ is never used.

A natural question is whether the solution can be absolutely stable for $k$ large enough. Or, will the numerical stability region $\mathcal{S}_k$ contain the stability region for the continuous case? In other words, will the method be $\tau(0)$-stable if we restrict that $k\geq 2$ ?

Unfortunately, the following result, our second main result, gives a negative answer.

\begin{theorem}\label{thm:noabsstability}
Assume that $\alpha\in(0, 1)$, $\tau=1$ and $h=1/k$. For any $k\ge 1$, $\mathcal{R}_1$ always stays in the numerical stability region and there is a portion of $\mathcal{R}_2$ that is outside the numerical stability region $\mathcal{R}_2^{h}$. Consequently, the numerical method is never absolutely stable for $\alpha\in (0, 1)$.
\end{theorem}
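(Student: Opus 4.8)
The plan is to compare the numerical stability region $\mathcal{S}_k$ with the continuous region $\mathcal{S}_*=\mathcal{R}_1\cup\mathcal{R}_2$ by examining the lower boundary curve $\Gamma_0$ from Theorem \ref{thm:numericalregion} against the continuous lower boundary $\Gamma$ from \eqref{eq:truestab}. The containment $\mathcal{R}_1\subset\mathcal{S}_k$ is already established in the Remark following Lemma \ref{lmm:charequ} (via the elementary estimate $|b|\ge -a$ when $a\le 0$), so the whole issue reduces to showing that $\Gamma_0$ is not entirely below $\Gamma$ in the region $|a|+b<0$, i.e.\ that some parameter pair in $\mathcal{R}_2$ lies above $\Gamma_0$ and hence outside $\mathcal{S}_k$.

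First I would dispose of the case $k=1$, which is already in hand: by Theorem \ref{thm:numericalregion} the lower boundary is the straight line $a-b=2^{\alpha}$, and Lemma \ref{lmm:locuslines}(i) together with Lemma \ref{lmm:continuouscurve} shows this line passes above part of $\mathcal{S}_*$; the explicit example $(0,b)$ with $-\tfrac{[(1-\alpha)\pi]^{\alpha}}{2\sin(\pi(1-\alpha)/2)}<b<-2^{\alpha-1}$ exhibits a point of $\mathcal{R}_2\cap\mathcal{S}_*$ outside $\mathcal{S}_k$. For $k\ge 2$, the natural strategy is to look at the endpoint behavior of $\Gamma_0$ as $\theta\to\frac{1-\alpha}{1-\alpha/k}\pi^{+}$, where by construction $\Gamma_0$ meets the line $a+b=0$ at the vertex $a_0=-b_0=\tfrac{2^{\alpha}k^{\alpha}\cos^{\alpha}(\phi_0)}{2\cos(\alpha\phi_0)}$ with $\phi_0=\tfrac{(k-1)\pi}{2(k-\alpha)}$ (Lemma \ref{lem:points} with $m=0$), and compare this with the vertex $P$ of $\mathcal{S}_*$, which sits at $a=-b=\tfrac{[(1-\alpha)\pi]^{\alpha}\sin(\alpha\pi/2)}{\sin(\alpha\pi)}$. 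If I can show $a_0$ exceeds the value of $a$ at the point where the continuous curve $\Gamma$ crosses the diagonal $a=-b$ — more precisely, that near the common line $a+b=0$ the curve $\Gamma_0$ lies strictly to the right of (hence below, in the relevant sense) where $\mathcal{S}_*$ allows — then a short neighborhood argument produces points of $\mathcal{R}_2\cap\mathcal{S}_*$ that are outside $\mathcal{R}_2^h$. Concretely, one compares $a_0$ with $a_P:=\tfrac{[(1-\alpha)\pi]^{\alpha}\sin(\alpha\pi/2)}{\sin(\alpha\pi)}$ and checks, by the same kind of monotonicity-in-$\alpha$ elementary calculus used in Appendix \ref{app:lines} and Appendix \ref{app:Gamma0}, that $a_0>a_P$ for all $\alpha\in(0,1)$ and all $k\ge 2$; equivalently, using $\lambda_k(\theta)=a-b$ decreasing along $\Gamma_0$ (Lemma \ref{lem:decrecruv}) and $\lambda(\theta)=a-b$ decreasing along $\Gamma$ (Lemma \ref{lmm:continuouscurve}), one compares the limiting values $\lim\lambda_k=2a_0$ and the corresponding quantity on $\Gamma$, and a gap there localizes a piece of $\mathcal{R}_2$ between the two curves.

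I expect the main obstacle to be the uniform-in-$k$ comparison of the two vertices: showing $\tfrac{2^{\alpha}k^{\alpha}\cos^{\alpha}(\phi_0)}{2\cos(\alpha\phi_0)}>\tfrac{[(1-\alpha)\pi]^{\alpha}\sin(\alpha\pi/2)}{\sin(\alpha\pi)}$ for every $k\ge 2$ and $\alpha\in(0,1)$ requires care because as $k\to\infty$ one has $k^{\alpha}\to\infty$ while $\cos^{\alpha}(\phi_0)\to \cos^{\alpha}(\pi/2)=0$, so the left side is an indeterminate product that must be resolved by a precise asymptotic expansion of $\cos\!\big(\tfrac{(k-1)\pi}{2(k-\alpha)}\big)=\cos\!\big(\tfrac{\pi}{2}-\tfrac{(1-\alpha)\pi}{2(k-\alpha)}\big)=\sin\!\big(\tfrac{(1-\alpha)\pi}{2(k-\alpha)}\big)$; plugging this in, the left side tends to $\tfrac{[(1-\alpha)\pi]^{\alpha}}{2\cos(\alpha\pi/2)}$ as $k\to\infty$, which one then checks strictly exceeds $a_P$ for all $\alpha\in(0,1)$, while for small $k$ one argues monotonicity of $a_0$ in $k$ (as in Lemma \ref{lem:points}, $a_0$ increases as the relevant angle decreases, hence as $k$ increases) to reduce to the finitely-checkable boundary case $k=2$. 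Once the strict vertex gap is established, the conclusion — there is always a nonempty portion of $\mathcal{R}_2\subset\mathcal{S}_*$ lying outside $\mathcal{S}_k$, hence $\mathcal{S}_*\not\subset\mathcal{S}_{\tau(0)}=\bigcap_k\mathcal{S}_k$ and the method is never $\tau(0)$-stable — follows immediately, and the $\mathcal{R}_1\subset\mathcal{S}_k$ half is just a restatement of the earlier Remark.
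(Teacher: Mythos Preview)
Your overall strategy---compare the vertex where $\Gamma_0$ meets $a+b=0$ with the vertex $P$ of $\mathcal{S}_*$---is exactly the paper's approach via Proposition~\ref{pro:Gamma0versusGamma}(i). However, you have the key inequality \emph{backwards}. You try to show
\[
a_0=\frac{2^{\alpha}k^{\alpha}\cos^{\alpha}(\phi_0)}{2\cos(\alpha\phi_0)}
\;>\;
a_P=\frac{[(1-\alpha)\pi]^{\alpha}\sin(\alpha\pi/2)}{\sin(\alpha\pi)}
=\frac{[(1-\alpha)\pi]^{\alpha}}{2\cos(\alpha\pi/2)},
\]
but your own computation shows the opposite: you correctly find that $\lim_{k\to\infty}a_0=a_P$ and that $a_0$ is increasing in $k$, which together give $a_0<a_P$ for every finite $k\ge2$. (In particular, your claim that the limit ``strictly exceeds $a_P$'' is false---it equals $a_P$; and since $a_0$ is increasing, reducing to $k=2$ would give the smallest value, not the largest.)

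The geometric picture is therefore the reverse of what you describe: the numerical vertex $(a_0,-a_0)$ lies \emph{above} (further up-left on the line $a+b=0$) the continuous vertex $P=(a_P,-a_P)$. This is what actually delivers the conclusion: since $\Gamma_0$ meets $a+b=0$ already at $a=a_0<a_P$, the tip of $\mathcal{R}_2$ near $P$---where $a$ is close to $a_P>a_0$---lies outside $\mathcal{S}_k$ while remaining inside $\mathcal{S}_*$. Once you correct the sign, your monotonicity-plus-limit argument is precisely the content of the paper's proof of Proposition~\ref{pro:Gamma0versusGamma}(i); the paper carries it out by writing $2a^{(k)}=e^{g(1/k)}$ and showing $g$ is decreasing via an elementary (but somewhat delicate) derivative estimate, rather than by invoking Lemma~\ref{lem:points} (which concerns monotonicity in $m$, not in $k$).
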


The claim regarding $\mathcal{R}_1$ and $k=1$ has already been proved in section \ref{sec:caseI}. We focus on the second claim, which can be proved by the following proposition.

\begin{proposition}\label{pro:Gamma0versusGamma}
For $k\ge 2$, consider the curve $\Gamma_0$. Then, we have the following geometric properties
\begin{enumerate}[(i)]
\item $\Gamma_0$ intersects $a+b=0$ at $A_k:=(a^{(k)}, b^{(k)})$ and we always have $a^{(k)}<\frac{[(1-\alpha)\pi]^{\alpha}\sin(\frac{\alpha\pi}{2})}{\sin(\alpha\pi)}$. Geometrically, this means the vertex of the numerical stability region is above the vertex of $\mathcal{S}_*$.

\item $\Gamma_0$ has an asymptotic line given by
\begin{gather}\label{eq:gamma0asympline}
a-b=2^{\alpha}k^{\alpha}\sin^{\alpha} \left(\frac{\pi}{2k} \right)
\cos\left(\alpha \left(\frac{\pi}{2}-\frac{\pi}{2k} \right) \right).
\end{gather}
 For $\alpha > \alpha_*\approx 0.113$ or $k$ is sufficiently large, this asymptotic line is below the asymptotic line for the curve $\Gamma$, in which cases $\Gamma_0$ intersects with $\Gamma$.
\end{enumerate}
\end{proposition}

\begin{figure}[!ht]
\begin{center}
$\begin{array}{cc}
 \includegraphics[scale=0.65]{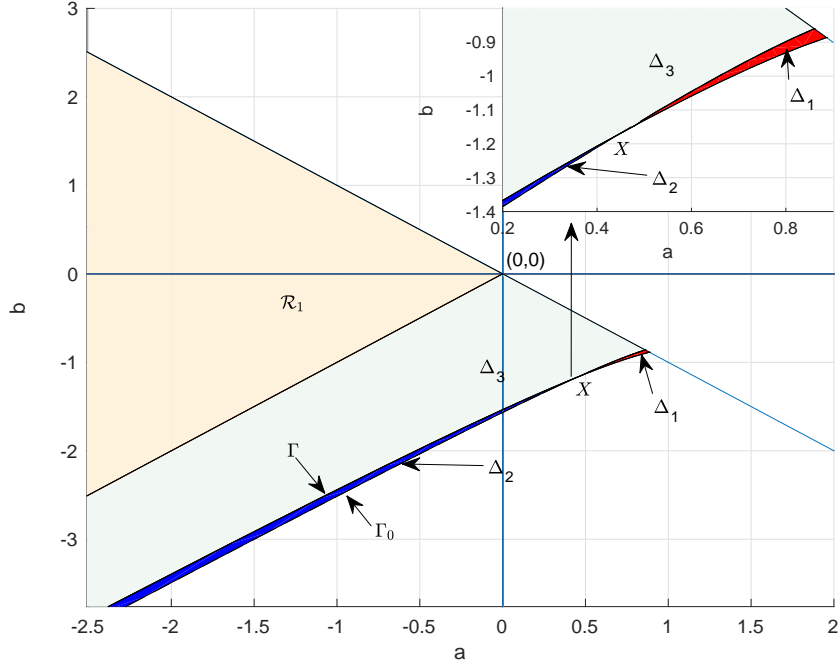}
 \end{array}$
\end{center}
\caption{Comparison of numerical stability region $\mathcal{S}_{h}$ and continuous stability region $\mathcal{S}_*$ for $\tau=1$, $h=0.2, k=5$ and $\alpha=0.5$.}
\label{stacomp}
\end{figure}

We compare the numerical stability region $\mathcal{S}_{h}$ and continuous stability region $\mathcal{S}_*$ with parameter $\tau=1$, $h=0.1, k=10$ and $\alpha=0.5$ in Fig. \ref{stacomp}. We see from this figure that the curves $\Gamma_0$ and $\Gamma$ have an intersection $X=X(a, b)$ with $a>0$ and $b<0$. 
Let's do the following simple division as $\mathcal{R}_2=\Delta_1\cup \Delta_3$ and $\mathcal{R}_2^h=\Delta_2\cup \Delta_3$. $\Delta_1$ is shown in red and $\Delta_2$ is shown in blue. It is found that in $\Delta_1$ the curve $\Gamma_0$ is over $\Gamma$ but in $\Delta_2$ the curve $\Gamma_0$ is blow $\Gamma$. 
See a larger sub-figure in Fig. \ref{stacomp}. The intersection of these two curves $\Gamma_0$ and $\Gamma$ destroys the $\tau(0)$-stability of the numerical method.

\begin{proof}[Proof of Proposition \ref{pro:Gamma0versusGamma} ]

(i).  Consider the intersection between $\Gamma_0$ and $a+b=0$.
We have $2a^{(k)}=2^{\alpha}k^{\alpha}\frac{\cos^{\alpha}(\phi_1)}{\cos(\alpha\phi_1)}$, where
$
\phi_1=\frac{\pi(k-1)}{2(k-\alpha)}.
$
It is clear that
$
\lim_{k\to\infty} a^{(k)}=\frac{[(1-\alpha)\pi]^{\alpha}\sin(\frac{\alpha\pi}{2})}{\sin(\alpha\pi)}.
$
Hence, (i) will follow if we can show that $2a$ is increasing as $k\to\infty$.
For this purpose, we introduce for $\epsilon\in (0, 1/2)$ that
\[
g(\epsilon)=\alpha \left[ \ln \left(\frac{2}{\epsilon}\right)+\ln\sin \left( \frac{(1-\alpha)\pi}{2}
\frac{\epsilon}{1-\epsilon\alpha} \right) \right]
-\ln\cos \left( \frac{\alpha\pi}{2}\frac{1-\epsilon}{1-\epsilon\alpha} \right).
\]
Clearly,  $2a^{(k)}=e^{g(1/k)}$. Hence, we show $g(\e)$ is a decreasing function. Take the derivative
\[
g'(\epsilon)=-\frac{\alpha}{\epsilon}
+\alpha\cot \left( \frac{(1-\alpha)\pi}{2}\frac{\epsilon}{1-\epsilon\alpha} \right) 
\frac{(1-\alpha)\pi}{2}\frac{1}{(1-\epsilon\alpha)^2}
-\cot \left( \frac{\pi}{2}\frac{1-\alpha}{1-\epsilon\alpha} \right) \frac{(1-\alpha)\pi}{2}\frac{\alpha}{(1-\epsilon\alpha)^2}.
\]
Hence, letting
$y=\frac{\pi}{2}\frac{1-\alpha}{1-\epsilon\alpha},$
one has
$g'(\epsilon)=\frac{\alpha y}{(1-\epsilon\alpha)\sin(\epsilon y)}G,$
where
$
G=-\frac{(1-\epsilon \alpha)\sin(\epsilon y)}{\epsilon y}
+\frac{\sin[(1-\epsilon)y]}{\sin y}.
$
We now show that $G<0$. By the inequality $\frac{\sin x}{x}\ge \cos x$, one has
$
G\le \frac{\sin((1-\epsilon)y)}{\sin y}-(1-\epsilon \alpha)\cos\epsilon y
=\frac{\cos y\cos(\epsilon y)}{\sin y}[-\tan(\epsilon y)+\epsilon\alpha \tan y].
$
Now, we set
$z=\alpha \epsilon$ and consider
\[
H(z, \alpha)=-\tan (\epsilon y)+\epsilon\alpha \tan(y)
=-\tan \left( \frac{\pi(z/\alpha-z)}{2(1-z)} \right)+z\tan \left( \frac{\pi(1-\alpha)}{2(1-z)} \right),
\]
where $0<2z\le \alpha\le 1$.
Clearly, $H(z, 1)=0$ and for each $z< \frac{1}{2}$, we can compute
\[
\frac{\partial H}{\partial \alpha}
=\sec^2 \left( \frac{\pi(z/\alpha-z)}{2(1-z)} \right)
\frac{\pi z/\alpha^2}{2(1-z)}+\sec^2 \left( \frac{\pi(1-\alpha)}{2(1-z)} \right)\frac{-z\pi}{2(1-z)}>0.
\]
The last inequality is equivalent to
\[
\cos \left(\frac{\pi(1-\alpha)}{2(1-z)} \right)>\alpha\cos \left(\frac{\pi(z/\alpha-z)}{2(1-z)}\right)
\quad
\text{or}
\quad
\sin \left( \frac{\pi(\alpha-z)}{2(1-z)} \right)>\alpha\sin \left( \frac{\pi(1-z/\alpha)}{2(1-z)} \right).
\]
Since $\beta_*:=\frac{\pi(1-z/\alpha)}{2(1-z)}\in(0, \pi/2)$ and $x\mapsto \sin x$ is concave on $[0, \pi/2]$,
$\sin(\alpha \beta_*)>\alpha\sin(\beta_*)$ and thus the last inequality holds.
Hence, $H<0$ in the region considered and therefore, $G<0$. The proof of (i) is thus complete.

(ii).  For $\Gamma_0$, the value $a-b$ as a function of $\theta$ is decreasing. Moreover, as $\theta\to\pi$, $a\to-\infty$, $b\to-\infty$, $a/b\to 1$ and 
$a(\theta)-b(\theta)\to 2^{\alpha}k^{\alpha}\sin^{\alpha} \left(\frac{\pi}{2k} \right)
\cos\left(\alpha \left(\frac{\pi}{2}-\frac{\pi}{2k} \right) \right)$.
These imply that the asymptotic line of $\Gamma_0$ is given by
\eqref{eq:gamma0asympline}.

Clearly, as $k\to \infty$, this asymptotic line will tend to the asymptotic line of $\Gamma$. 
Consider the function $g(x)=[\sin(x)/x]^{\alpha}\cos(\alpha(\frac{\pi}{2}-x))$, $x\in [0, \pi/4]$.
It can be verified directly that $\frac{d}{dx}\ln g(x)>0$ for $x$ close to $0$, and that $\frac{d^2}{dx^2}\log (g(x))<0$ for all $x\le \frac{\pi}{4}$. This means $\log (g)$ is a concave function.
Consider the value $\alpha^*$ such that $g(\frac{\pi}{4})=g(\frac{\pi}{6})$. This value can be found to be 
 $\alpha^*\approx 0.241$. Then, for $\alpha>\alpha^*$, $2^{\alpha}k^{\alpha}\sin^{\alpha} \left( \frac{\pi}{2k} \right)\cos\left(\alpha \left(\frac{\pi}{2}-\frac{\pi}{2k} \right)\right)$ is decreasing for $k=2,3,\cdots$. This means that the asymptotic line of $\Gamma_0$ is montonely becoming higher and higher as $k$ increases. Hence, they are below $\Gamma$.

There is a further critical value $\alpha_*\in (0, \alpha^*)$ such that when $\alpha> \alpha_*$, the asymptotic lines of $\Gamma_0$ for all $k$ are below the one for $\Gamma$. In fact, due to the concave property of $\ln g$, the possible highest asymptotic line is $k=2$ and $k=\infty$. Hence, $\alpha_*$ can be found easily by solving
\[
4^{\alpha}\sin^{\alpha} \left( \frac{\pi}{4} \right)
\cos\left(\alpha \left( \frac{\pi}{2}-\frac{\pi}{4} \right)\right)=\pi^{\alpha}\cos(\alpha\pi/2).
\]
This gives $\alpha_* \approx  0.113$.

For $\alpha\in (0, \alpha_*)$, when $k$ is small (for example $k=2$), the asymptotic line of $\Gamma_0$ is above that for $\Gamma$. However, since $\frac{d}{dx}\ln g(x)>0$ for $x$ close to $0$, the asymptotic line of $\Gamma_0$ will eventually falls below that for $\Gamma$ when $k$ is large enough. This then verifies the second claim.
\end{proof}

As soon as Proposition \ref{pro:Gamma0versusGamma} is proved, Theorem \ref{thm:noabsstability} is a straightforward corollary, and we omit the proof.

\begin{remark}
According to the proof above, it is also not hard to see that if $\alpha\to 1^-$, $\Gamma_0$ will then fall below $\Gamma$ totally. This implies that the backward Euler scheme is $\tau(0)$-stable for integer DDEs and it is  agrees with the result in \cite{guglielmi1998delay, huang2009delay}.
\end{remark}

\section{Mittag-Leffler numerical stability}
\label{sec:mlstability}

As we have seen from Section \ref{sec:blt} that boundary locus technique can only determine the boundary of the stability region, but cannot determine the asymptotic behavior of the numerical solutions accurately when the numerical solutions is stable. In order to distinguish the integer order DDEs from the F-DDEs, it is very important to describe the long time decay rate of the solutions accurately. To do that, we introduce the technique of the singularity analysis of generating functions.

\subsection{Singularity analysis of generating functions}

The generating functions of $\mu$ and $\omega$ are related by
$\mathscr{F}_{\mu}(z)=\frac{1}{\mathscr{F}_{\omega}(z)}$ if $\omega=\mu^{(-1)}$. 
As usually, we shall often write that $F_{v}(z)\sim f(z)$ as $z\to z_{0}$ that are equivalent in the sense $\lim_{z\to z_{0}}\frac{F_{v}(z)}{f(z)}=1$.

\begin{lemma}\emph{(\cite[Corollary VI.I]{FS09})} 
\label{lem:generating}
 Assume $\mathscr{F}_v(z)$ is analytic on $\Delta(R,\theta) :=\{z: |z|<R, z\neq 1, |\mathrm{arg}(z-1)|>\theta\}$ for some $R>1$ and $\theta\in (0, \frac{\pi}{2})$. If
 $\mathscr{F}_v(z)\sim (1-z)^{-\beta}$ as $z\to 1,  z\in \Delta(R,\theta)$ for $\beta\neq \{0, -1, -2, -3, \cdots\}$, then $v_n \sim \frac{1}{\Gamma(\beta)}n^{\beta-1} $ as $n\to \infty$.
\end{lemma}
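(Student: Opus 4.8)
The plan is to follow the classical \emph{singularity analysis} argument of Flajolet and Odlyzko (this lemma is exactly the transfer theorem quoted as \cite[Corollary VI.I]{FS09}), which rests on Cauchy's coefficient integral together with a contour that hugs the boundary of the $\Delta$-domain $\Delta(R,\theta)$. The proof decomposes into two independent pieces: first, the exact coefficient asymptotics of the benchmark function $(1-z)^{-\beta}$; second, a \emph{transfer} estimate showing that the difference $\mathscr{F}_v(z)-(1-z)^{-\beta}$ contributes only a term of strictly smaller order to $v_n$. Adding the two pieces then gives the claim.

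For the first piece, Newton's binomial expansion yields $[z^n](1-z)^{-\beta}=\binom{n+\beta-1}{n}=\Gamma(n+\beta)/\big(\Gamma(\beta)\,\Gamma(n+1)\big)$, and Stirling's formula in the form $\Gamma(n+\beta)/\Gamma(n+1)=n^{\beta-1}\big(1+O(1/n)\big)$ gives $[z^n](1-z)^{-\beta}\sim n^{\beta-1}/\Gamma(\beta)$. It is precisely here that the hypothesis $\beta\notin\{0,-1,-2,\dots\}$ is used: at a nonpositive integer $\beta$ one has $1/\Gamma(\beta)=0$, the leading term degenerates, and the clean asymptotic could fail.

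For the second piece, I would write $\mathscr{F}_v(z)=(1-z)^{-\beta}\big(1+\varepsilon(z)\big)$, where $\varepsilon$ is analytic on the same $\Delta(R,\theta)$ and, by hypothesis, $\varepsilon(z)\to 0$ as $z\to 1$ within the domain (so $\varepsilon$ is bounded on each truncated subdomain $\{|z|\le R',\,z\neq 1,\,|\arg(z-1)|\ge\theta\}$ with $R'\in(1,R)$). Starting from $v_n=\frac{1}{2\pi i}\oint \mathscr{F}_v(z)\,z^{-n-1}\,dz$, deform the circle of integration, staying inside $\Delta(R,\theta)$, to a contour $\gamma=\gamma_1\cup\gamma_2\cup\gamma_3$: a circle $\gamma_1$ of radius $1/n$ about $z=1$, the two boundary segments $\gamma_2$ running from $\gamma_1$ out to radius $R'$, and the complementary arc $\gamma_3$ of $\{|z|=R'\}$. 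On $\gamma_3$ and on the far portion of $\gamma_2$ one has $|z|\ge 1+\delta$, so $|z|^{-n}$ is exponentially small and these pieces contribute $O(\lambda^n)$ with $\lambda<1$. On $\gamma_1$ and the near portion of $\gamma_2$ one bounds $|(1-z)^{-\beta}|$ by $|1-z|^{-\beta}$, uses $|z|^{-n}=O(1)$ on $\gamma_1$ and $|z|^{-n}\le e^{-c|1-z|\,n}$ on $\gamma_2$, and substitutes $u=n|1-z|$; the ray integrals then reduce to $n^{\beta-1}\int_1^{\infty}u^{-\beta}e^{-cu}\,du=O(n^{\beta-1})$. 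Since $|\varepsilon(z)|<\eta$ once $|1-z|$ is small, the contribution of $(1-z)^{-\beta}\varepsilon(z)$ over these pieces is at most $\eta\cdot O(n^{\beta-1})+O(\lambda^n)$; letting $\eta\downarrow 0$ gives $[z^n]\big(\mathscr{F}_v(z)-(1-z)^{-\beta}\big)=o(n^{\beta-1})$, whence $v_n=n^{\beta-1}/\Gamma(\beta)+o(n^{\beta-1})\sim n^{\beta-1}/\Gamma(\beta)$.

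The main obstacle is the bookkeeping in the second piece: one must check that the contour deformation is legitimate (analyticity of the integrand in the region swept, and vanishing of the connecting arcs in the limit), and---more delicately---that the ray integrals are uniformly $O(n^{\beta-1})$ for \emph{arbitrary real} $\beta$ after the rescaling $u=n|1-z|$, which is the quantitative heart of the Flajolet--Odlyzko transfer theorem. Everything else is Stirling's formula and a routine $\varepsilon$/$\eta$ argument, so in the write-up I would simply invoke \cite[Corollary VI.I]{FS09} rather than reproduce these estimates.
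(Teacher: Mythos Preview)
Your proposal is correct and is precisely the standard Flajolet--Odlyzko transfer argument; however, the paper does not prove this lemma at all---it is simply quoted as \cite[Corollary VI.I]{FS09} and used as a black box, exactly as you yourself suggest in your final sentence. So there is nothing to compare: your sketch is the proof one would find in the cited reference, and the paper's ``proof'' is the citation.
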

Note that this singularity analysis is in fact some Tauberian type results \cite{widder41,FS90}.  The results are further extended to various typical functions in the monograph \cite{FS09}. These results can also be stated equivalently using the discrete Laplace transform. We chose to use the generating function because simply because most of such results are stated with the generating functions, as in \cite{FS90}.
This fundamental lemma allows us to derive the the asymptotic behavior of the function's coefficients  
by studying the function's dominant singularities, which perfectly fits the numerical analysis of the time fractional differential equations. Through the generating functions, one can explicitly derive the algebraic decay rate of numerical solutions, which is a major feature of time fractional order equations. In \cite{wang2020ML}, combined with perturbation analysis, this method was used to establish the numerical Mittag-Leffler stability of fractional LMMs for F-ODEs; i.e., not only the stability region, but also the optimal algebraic decay rate estimate for the numerical solutions. In this section, we make use of this lemma again to establish the Mittag-Leffler numerical stability for F-DDEs.

\subsection{Mittag-Leffler numerical stability}
In order to apply the singularity analysis of generating functions, we first drive the expression for the numerical solutions of F-DDEs by generating functions.
Consider the generating function \eqref{eq:Fy}
\begin{equation}
\mathscr{F}_y(z) =\left( (1-z)^{\alpha}-h^{\alpha} \left( a +bz^{k} \right)  \right)^{-1} \cdot \left( y_{0}(1-z)^{\alpha-1}+h^{\alpha} \left( bg(z)-ay_0 \right)\right),
\end{equation}
where $g(z)=\sum_{\ell=1}^{k-1}y_{\ell-k}z^{\ell}$. We define
\[
P(z)=(1-z)^{\alpha}-h^{\alpha} \left( a +bz^{k} \right).
\]
Clearly, both the numerator and the denominator $P(z)$ are analytical in any $\Delta(R,\theta)$ with $R>1$ and $\theta\in (0, \pi/2)$. To perform the singularity analysis, we need $P(z)$ to have no zeros in some $\Delta(R,\theta)$ region.
Recall the definition of $Q(s)$, we easily see that
\[
P(1-hs)=h^{\alpha}Q(s).
\]

With this relation, we show the following result.
\begin{proposition}
If $(a, b)\in \mathcal{S}_k$ (the open region), then there exists some $R>1$ and $\theta\in (0, \pi/2)$ such that $P(z)$ is nonzero in $\Delta(R,\theta)$. 
\end{proposition}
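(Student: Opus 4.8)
The plan is to transfer the zero-free information from the disk $D(h^{-1},h^{-1})$ (where we know $Q$ has no roots because $(a,b)$ lies in the open stability region $\mathcal{S}_k$) to a Camembert region $\Delta(R,\theta)$ around $z=1$, using the affine change of variables $z=1-hs$, i.e. $s=(1-z)/h$, under which $P(z)=h^{\alpha}Q(s)$. Since this map is a similarity of the plane, the closed disk $\mathrm{cl}(D(h^{-1},h^{-1}))$ in the $s$-plane corresponds exactly to the closed unit disk $\mathrm{cl}(D(0,1))$ in the $z$-plane. So the first step is: by Proposition \ref{pro:asympbehavior} and the boundary locus analysis of Section \ref{sec:blt}, if $(a,b)$ is an interior point of $\mathcal{S}_k$ then $Q(s)$ has no zero in $\mathrm{cl}(D(h^{-1},h^{-1}))$; equivalently $P(z)$ has no zero in $\mathrm{cl}(D(0,1))$, and in particular $P(1)=-h^{\alpha}(a+b)\neq 0$ (note $a+b<0$ strictly in the interior, consistent with the upper boundary $a+b=0$).

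Next I would observe that $P$ is continuous (indeed analytic) on the slit plane $\mathbb{C}\setminus(1,\infty)$ — using the chosen branch cut of $w\mapsto w^{\alpha}$ along $(-\infty,0)$, so that $(1-z)^{\alpha}$ is analytic for $z\notin[1,\infty)$ — and that $\mathrm{cl}(D(0,1))$ is compact with $P$ nonvanishing on it. By continuity and compactness, $|P|$ has a positive minimum on $\mathrm{cl}(D(0,1))$, and by continuity of $P$ this zero-free property persists on a slightly larger compact neighborhood. Concretely, I would take a Camembert region $\Delta(R,\theta)=\{z:|z|<R,\ z\neq 1,\ |\arg(z-1)|>\theta\}$ with $R$ slightly bigger than $1$ and $\theta$ slightly less than $\pi/2$; for $R$ close enough to $1$ and $\theta$ close enough to $\pi/2$, the set $\mathrm{cl}(\Delta(R,\theta))\setminus\{1\}$ is contained in an arbitrarily small neighborhood of $\mathrm{cl}(D(0,1))$ (this is an elementary point-set fact: as $\theta\uparrow\pi/2$ the wedge $\{|\arg(z-1)|>\theta\}$ shrinks toward the ray $\{z<1\}$, and as $R\downarrow 1$ the disk shrinks to the unit disk). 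Hence $P$ stays zero-free there. The behavior at the excluded point $z=1$ is harmless since $z=1\notin\Delta(R,\theta)$ by definition, and anyway $P(1)\neq 0$.

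The one point that needs a little care — and which I expect to be the main (though minor) obstacle — is the boundary circle $\partial D(0,1)$: the open-region hypothesis gives zero-freeness only on the open disk a priori, so I must argue that $(a,b)$ being interior to $\mathcal{S}_k$ forces $Q$ to have no zero on the boundary circle $\partial D(h^{-1},h^{-1})$ either. This is exactly where the boundary locus characterization is used: the parameters for which $Q$ acquires a root on $\partial D(h^{-1},h^{-1})$ are precisely the curves computed in Section \ref{sec:blt} (the lines of Case (I)–(II) and the curves $\Gamma_m$ of Case (III)), and by Theorem \ref{thm:numericalregion} these form the boundary $\partial\mathcal{S}_k$ (together with the portion of $a+b=0$); an interior point of $\mathcal{S}_k$ therefore cannot lie on any of them, so $Q$ is zero-free on the entire closed disk $\mathrm{cl}(D(h^{-1},h^{-1}))$. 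With the closed-disk zero-freeness in hand, the compactness-plus-continuity argument above goes through, and the proposition follows. I would also remark that this is exactly the hypothesis needed to apply Lemma \ref{lem:generating} to $\mathscr{F}_y$ in the subsequent Mittag-Leffler stability proof.
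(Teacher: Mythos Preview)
Your proposal is correct and follows essentially the same approach as the paper: use the boundary locus characterization to conclude that $P$ is zero-free on the full closed unit disk $\mathrm{cl}(D(0,1))$ when $(a,b)$ is interior to $\mathcal{S}_k$, and then invoke continuity and compactness to push the zero-free region out to some $\Delta(R,\theta)$. The paper phrases the last step as a sequential contradiction (fix $\theta$, take $R_n=1+1/n$, extract a limit zero on $\partial D(0,1)$), whereas you phrase it as a direct ``positive minimum on a compact set plus continuity'' argument; these are equivalent. One small simplification you could adopt from the paper: there is no need to send $\theta\uparrow\pi/2$, since $\Delta(R,\theta)\subset D(0,R)$ already lies in an $(R-1)$-neighborhood of $\mathrm{cl}(D(0,1))$ for any fixed $\theta\in(0,\pi/2)$, so shrinking $R$ alone suffices.
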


\begin{proof}
Due to the definition of boundary locus, when $(a, b)\in \mathcal{S}_k$, $Q(s)$ is nonzero on $\mathrm{cl}(D(h^{-1}, h^{-1}))$. Equivalently, $P(z)$ is nonzero on the whole $\mathrm{cl}(D(0, 1))$. Note that $P(z)$ is analytical in any $\Delta(R,\theta)$ and continuous on $\mathrm{cl}(\Delta(R,\theta))$. Fix $\theta$ and set $R_n:=1+1/n\to 1^+$. If there is always a zero point for any $R_n$, then we can find a sequence of zeros $z_n$ that tends to some $z_*\in \partial D(0, 1)$. By the continuity on $\mathrm{cl}(\Delta(R,\theta))$, $P(z_*)=0$. This, however, is impossible. The conclusion follows.
\end{proof}

%%%%%
This simple but surprising finding allows us to take use of the respective advantages of the two methods to establish numerical Mittag-Leffler stability for F-DDEs. 
 According to proposition \ref{pro:asympbehavior}, we know that $\mathscr{F}_y(z)$  is analytical on $\Delta(R,\theta)$, then it follows from Lemma \ref{lem:generating} that 
 \begin{equation}\label{eq:Fydcay}
\begin{split}
\mathscr{F}_y(z)
&=\left( 1-h^{\alpha} \left( a +bz^{k} \right) \mathscr{F}_\mu(z) \right)^{-1} \cdot \left( y_{0}(1-z)^{-1}+h^{\alpha} \left( bg(z)-(ay_0+by_{-k}) \right)\mathscr{F}_{\mu}(z)  \right)\\
&= \frac{ y_{0}(1-z)^{-1}+h^{\alpha} \left( bg(z)-(ay_0+by_{-k}) \right) (1-z)^{-\alpha} } {1-h^{\alpha} \left( a +bz^{k} \right)  (1-z)^{-\alpha} }\\
&= \frac{ y_{0}(1-z)^{\alpha-1}+h^{\alpha} \left( bg(z)-(ay_0+by_{-k}) \right) } {(1-z)^{\alpha}-h^{\alpha} \left( a +bz^{k} \right)   }\\
&\sim - \frac{ y_{0}} {h^{\alpha} \left( a +b\right)   } (1-z)^{\alpha-1}~\text{as} ~z\to 1, \\
\end{split}
\end{equation}
which yields that 
$$
y_{n}\sim - \frac{ y_{0}} {h^{\alpha} \left( a +b\right) } \frac{1}{\Gamma(1-\alpha)} n^{-\alpha}=  - \frac{ y_{0}} { \Gamma(1-\alpha) \left( a +b\right) } t_n^{-\alpha}~\text{ as}~n\to \infty
$$
provided that $a+b\neq 0.$ This shows that the condition $a+b= 0$ is part of the boundary of the numerically stable region. Meanwhile, it is noted that $P(0)\neq 0$ if and only if $a\neq h^{-\alpha}$.

To summarize the above analysis, we get the main results of this section.

\begin{theorem}\label{thm:main}
Let $\alpha\in(0,1)$, $a,b\in\mathbb{R}$,  $k\in\mathbb{N}^{+}$ such that $h=1/k$. 
Then the numerical solutions for the scheme in \eqref{eq:FDDE1} based on the GL method is Mittag-Leffler stable if $(a, b)\in \mathcal{S}_k$, or 
\[
y_{n}\sim - \frac{ y_{0}} { \Gamma(1-\alpha) \left( a +b\right) } t_n^{-\alpha} =O(t_n^{-\alpha}),
\]
 as $n\to\infty$.
\end{theorem}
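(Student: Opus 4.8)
The plan is to recover the precise long-time behaviour of $y_n$ from the dominant singularity of its generating function, using the transfer (singularity analysis) theorem of Lemma~\ref{lem:generating}. I would start from the closed form \eqref{eq:Fy},
\[
\mathscr{F}_y(z)=\frac{y_{0}(1-z)^{\alpha-1}+h^{\alpha}\bigl(bg(z)-ay_0\bigr)}{(1-z)^{\alpha}-h^{\alpha}\bigl(a+bz^{k}\bigr)},\qquad g(z)=\sum_{\ell=1}^{k-1}y_{\ell-k}z^{\ell},
\]
and abbreviate the denominator as $P(z):=(1-z)^{\alpha}-h^{\alpha}(a+bz^{k})$. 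With the branch cut of $w\mapsto w^{\alpha}$ fixed along $(-\infty,0)$, the functions $(1-z)^{\alpha}$ and $(1-z)^{\alpha-1}$ are single-valued and analytic on every dented disk $\Delta(R,\theta)=\{\,|z|<R,\ z\neq 1,\ |\arg(z-1)|>\theta\,\}$ with $R>1$ and $\theta\in(0,\pi/2)$, so both the numerator and $P$ are analytic there too.

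The first step is to produce a dented disk on which $\mathscr{F}_y$ is analytic. Using the identity $P(1-hs)=h^{\alpha}Q(s)$ with $Q$ as in \eqref{eq:characteristicfunc}, together with the definition of the boundary locus, the hypothesis $(a,b)\in\mathcal{S}_k$ forces $Q$ to have no zero on $\mathrm{cl}(D(h^{-1},h^{-1}))$, equivalently $P$ to have no zero on the closed unit disk $\mathrm{cl}(D(0,1))$. I would then invoke the proposition recorded just above (a sequential-compactness argument on the continuous function $P$) to upgrade this to the existence of $R>1$ and $\theta\in(0,\pi/2)$ with $P\neq 0$ on all of $\Delta(R,\theta)$. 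Hence $\mathscr{F}_y$ is analytic on $\Delta(R,\theta)$, which is exactly the hypothesis required by Lemma~\ref{lem:generating}.

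Next I would read off the singular expansion at $z=1$. As $z\to 1$ within $\Delta(R,\theta)$, and since $\alpha\in(0,1)$, one has $(1-z)^{\alpha}\to 0$ while $(1-z)^{\alpha-1}\to\infty$, and $g$ is a polynomial; thus $P(z)\to-h^{\alpha}(a+b)$ and the numerator is asymptotically dominated by $y_0(1-z)^{\alpha-1}$. Because the line $a+b=0$ is part of the boundary of $\mathcal{S}_k$, we have $a+b\neq 0$ on its (open) interior, so $\mathscr{F}_y(z)\sim -\frac{y_0}{h^{\alpha}(a+b)}(1-z)^{\alpha-1}$ as $z\to 1$, $z\in\Delta(R,\theta)$. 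Applying Lemma~\ref{lem:generating} with $\beta=1-\alpha\notin\{0,-1,-2,\dots\}$ then gives $y_n\sim -\frac{y_0}{h^{\alpha}(a+b)}\frac{n^{-\alpha}}{\Gamma(1-\alpha)}$, and substituting $n^{-\alpha}=h^{\alpha}t_n^{-\alpha}$ yields exactly $y_n\sim -\frac{y_0}{\Gamma(1-\alpha)(a+b)}\,t_n^{-\alpha}$. The Mittag-Leffler estimate $|y_n|\le C_\alpha t_n^{-\alpha}$ is then immediate by combining this tail asymptotics with the trivial boundedness of the finitely many initial iterates, and the case of general $\tau>0$ follows from the scaling \eqref{eq:scaling} (also compatible with the GL scheme by linearity).

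The hard part, I expect, is the first step: Lemma~\ref{lem:generating} demands analyticity on a genuine dented disk of radius strictly greater than $1$, not merely on the closed unit disk, so one really must show that the zero-free property of $P$ survives a small outward enlargement — this is where continuity of $P$ on $\mathrm{cl}(\Delta(R,\theta))$ and a limiting argument enter, and the reasoning must be phrased consistently with the chosen branch of $w\mapsto w^{\alpha}$. A minor side issue worth a remark is the degenerate case $y_0=0$: then the $(1-z)^{\alpha-1}$ term drops out, the numerator becomes a polynomial, $\mathscr{F}_y$ extends analytically across $z=1$ (in fact onto a disk of radius larger than $1$, since $P$ is zero-free on a neighbourhood of the closed unit disk), and $y_n$ decays geometrically, which is consistent with and stronger than the stated $O(t_n^{-\alpha})$ bound.
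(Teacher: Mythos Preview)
Your proposal is correct and follows essentially the same route as the paper: derive the closed form \eqref{eq:Fy} for $\mathscr{F}_y$, use the proposition that $P$ is zero-free on some $\Delta(R,\theta)$ when $(a,b)\in\mathcal{S}_k$, read off the asymptotic $\mathscr{F}_y(z)\sim -\frac{y_0}{h^{\alpha}(a+b)}(1-z)^{\alpha-1}$ at $z=1$, and apply Lemma~\ref{lem:generating}. If anything, your write-up is more careful than the paper's, explicitly noting that $a+b\neq 0$ in the open region and handling the degenerate case $y_0=0$.
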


As mentioned, the results for general $\tau>0$ can be obtained by simple scaling. The corresponding results for $\tau>0$ are given in the introduction (Theorem \ref{thm:MLstab}).

There is some subtlety on the boundary of $\mathcal{S}_k$. In fact, the lower boundary curve $\Gamma_0$ or the straight line corresponds to some zero of $Q(s)$ on $\partial D(h^{-1}, h^{-1})$ that is not $0$. Such parameters is not allowed
for the singularity analysis. However, for $a+b=0$ with $a<\frac{[((1-\alpha)\pi]^{\alpha}\sin(\frac{\alpha\pi}{2})}{\sin(\alpha\pi)}$, the zero of $Q$ corresponds to $s=0$ (or $z=1$ for $P$). There are no other zeros. Then, we can still find some $\Delta(R,\theta)$ such that $P(z)$ is nonzero on it. 
That means the singularity analysis can be applied. With the computation in \eqref{eq:Fydcay}, we find
\[
\mathscr{F}_y(z)\sim y_0(1-z)^{-1}.
\]
From here, one cannot obtain the decay of the solution. Instead, one finds $|y_n|$ to be bounded. In fact, if $\phi(t)=y_0$
for all $t<0$, then $y_n \equiv y_0$. Hence, there is no Mittag-Leffler stability for this case as well. Hence, it seems that 
it is also necessary that $(a, b)\in \mathcal{S}_k$ to have the Mittag-Leffler stability.

The numerical solutions with initial functions $\phi_1(t)=0.4$, $\phi_2(t)=0.1t-0.2$ and $\phi_3(t)=0.3\sin(6t)$ for $\tau=1$, $h=0.05$ and $\alpha=0.8$.

\section{Numerical example}
In this section, we give a simple numerical example to show that the numerical solution is Mittag-Leffler stable when the parameters are inside the numerical stable region, that is, the numerical solution exhibits the optimal polynomial decay rate similar to the continuous model.

In the simulation for the F-DDE model \eqref{eq:F-DDEs}, we take the initial functions $\phi_1(t)=0.4$, $\phi_2(t)=-0.1t-0.2$ and $\phi_3(t)=0.3\sin(6t)$, respectively. The numerical solutions for different stability parameter $(a, b)$ with $\tau=1, h=0.05$ and $\alpha=0.8$ are plotted in Fig. \ref{exam}. For the 
subfigures (a), (b) and (c) where $(a, b)$ lies in the numerical stability region, the numerical solutions keep stable and decay to zero no matter what the initial function is. While in subfigure (d) where $(a, b)$ lies out the numerical stability region, the numerical solutions are not stable and never decay to zero. 

In order to test numerical decay rate quantitatively, we introduce the index function 
\begin{equation} \label{eq:indexp}
\begin{split}
p_{\alpha}(t_{n})=-\frac{\ln(\|y_{n}\|/\|y_{n-1}\|)}{\ln(t_{n}/t_{n-1})}, ~~t_{n}>1.
\end{split}
\end{equation}
The index $p_{\alpha}$ is a numerical observation of $\alpha$ in $\|y_{n}\|=O(t_{n}^{-\alpha})$, which is independent of the initial value functions, see \cite{wang2019dissipativity}.
It shows in Table \ref{tableexam} that the numerical solutions have the polynomial decay rate and the numerical observation $p_{\alpha}$ are fully agree with our theoretical estimate, which is quite different from the exponential decay rate of the solutions to integer DDEs. 

\begin{table}
\caption{Observed $p_\alpha$ with $\tau=1, h=0.1$ and $a=-3, b=1$ for initial function $\phi_1(t)$} 
\begin{center}
\begin{tabular}{lll lll ll }
\hline $t_n$  & $\alpha=0.1$ & $\alpha=0.3$  & $\alpha=0.5$ & $\alpha=0.7$ & $\alpha=0.9$\\
\hline
$100   $ & 0.0896 & 0.2918  & 0.5014 & 0.7069 & 0.9080\\
$200   $ & 0.0902 & 0.2931  &0.5007  & 0.7040  & 0.9041\\
$300  $ & 0.0905 & 0.2938  &0.5005  & 0.7029  & 0.9028\\
$400  $& 0.0907 & 0.2943  & 0.5003 & 0.7023  & 0.9022\\
$500 $& 0.0909 & 0.2946  & 0.5003 & 0.7019  & 0.9017\\
\hline
\end{tabular}
\end{center}
\label{tableexam}
\end{table}

%%图片位置控制，四个子图
\begin{figure}[htbp]\centering
\subfigure[$a=-3, b=1$]{
\begin{minipage}[t]{0.45\linewidth}
\centering
\includegraphics[width=2.3in]{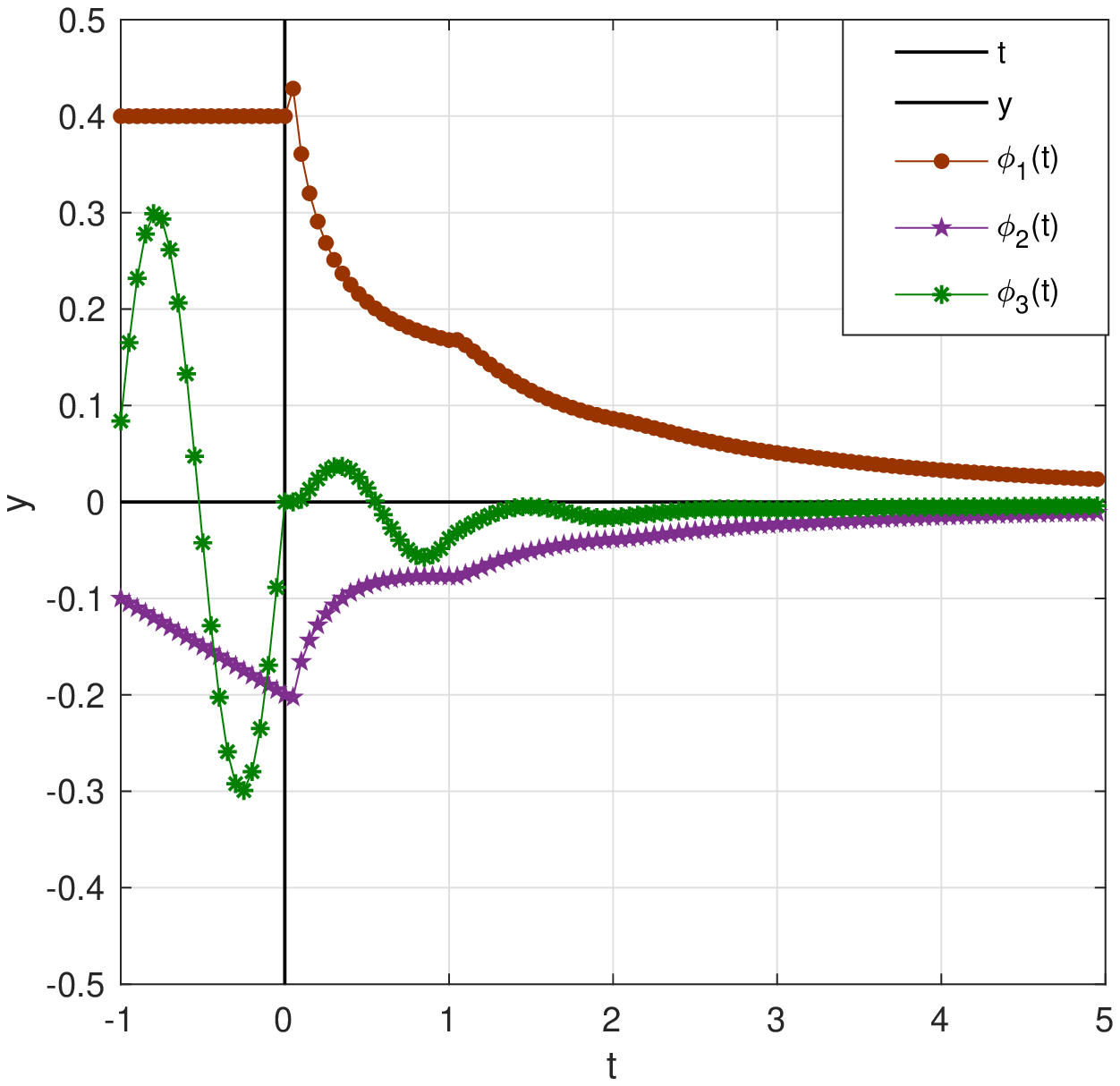}
\end{minipage}
}
\subfigure[$a=-2, b=1.9$]{
\begin{minipage}[t]{0.45\linewidth}
\centering
\includegraphics[width=2.5in]{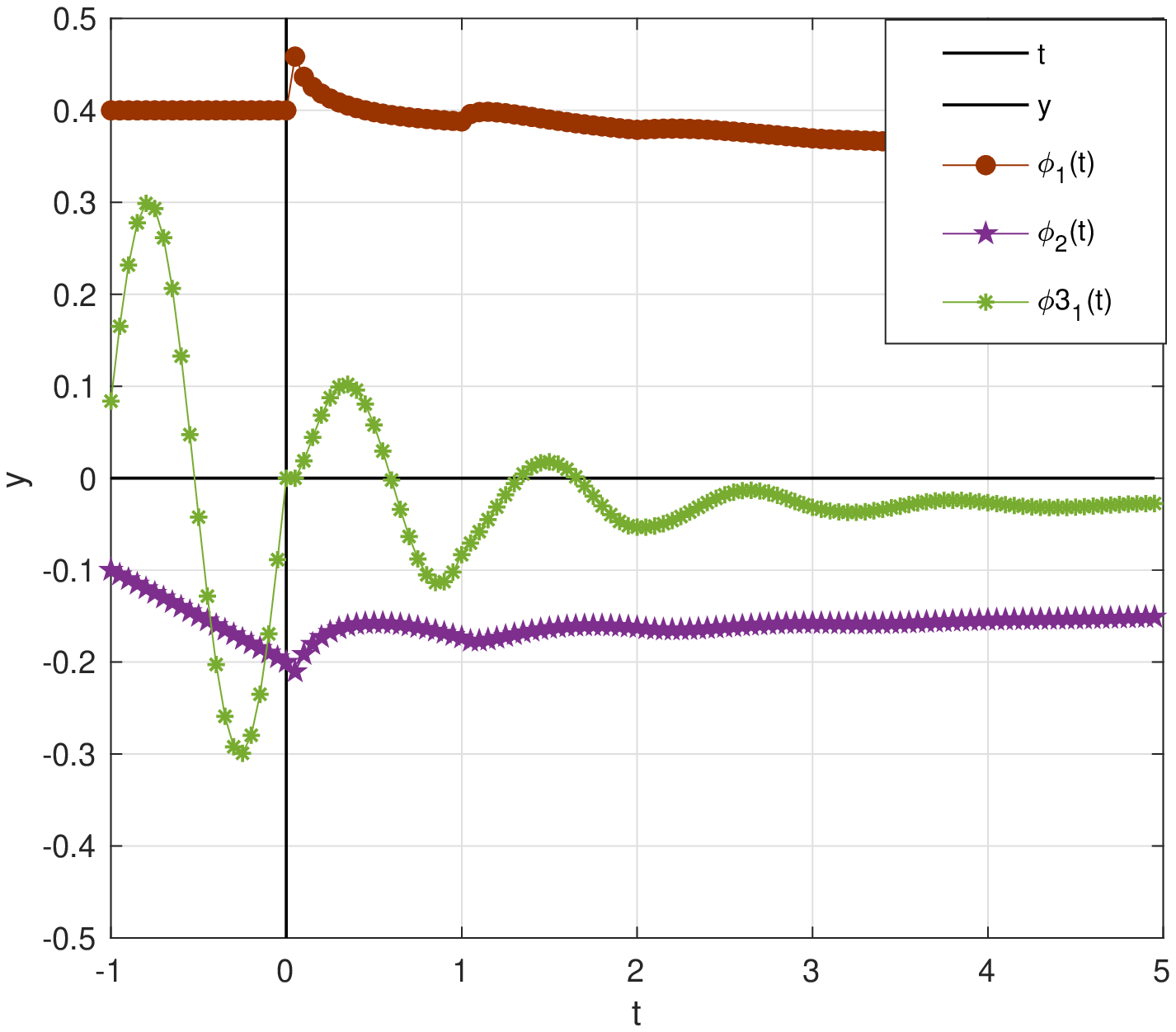}
\end{minipage}
}                 
\subfigure[$a=0, b=-1.5$]{
\begin{minipage}[t]{0.45\linewidth}
\centering
\includegraphics[width=2.4in]{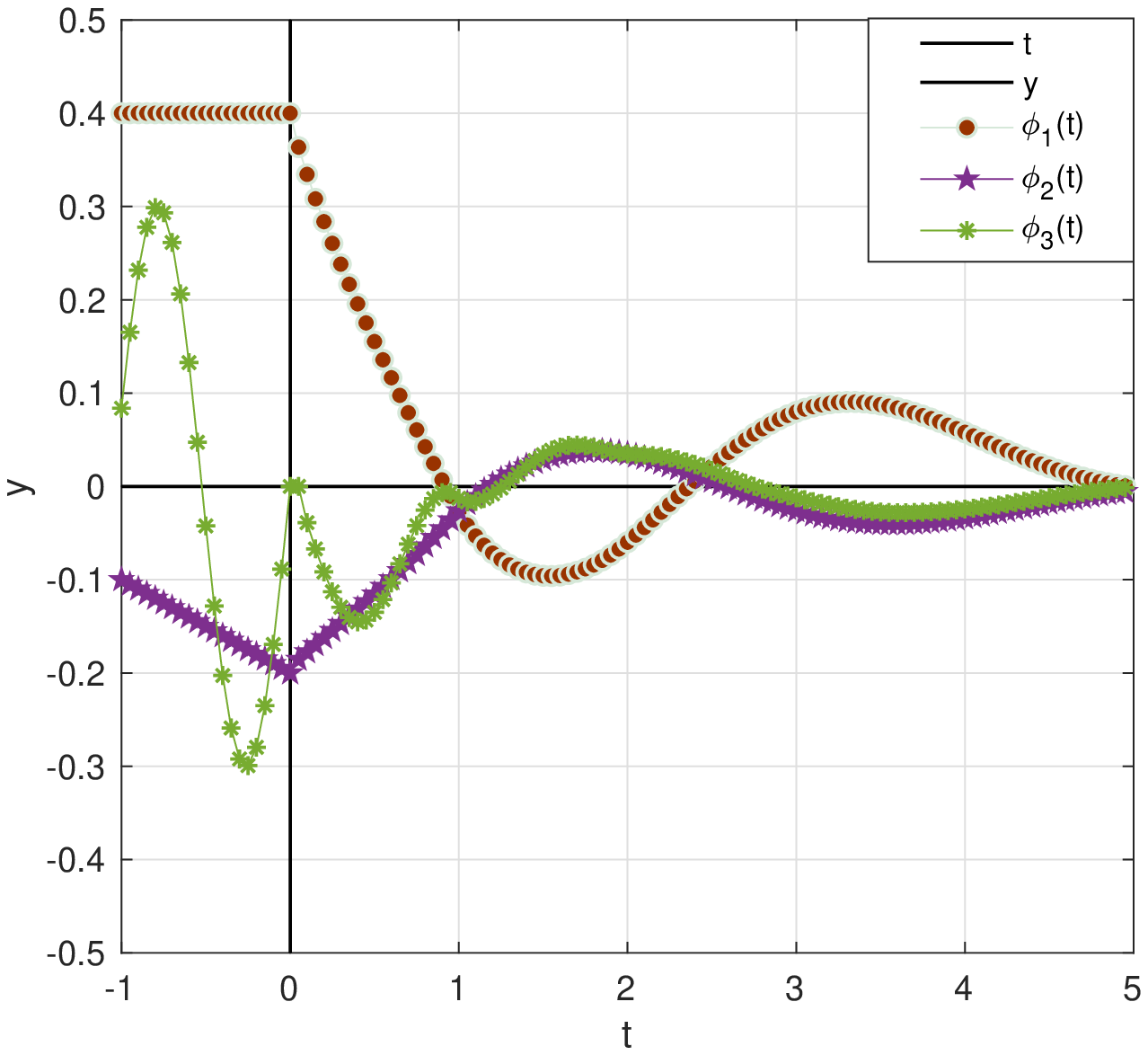}
\end{minipage}
}
\subfigure[$a=0, b=-3$]{
\begin{minipage}[t]{0.45\linewidth}
\centering
\includegraphics[width=2.6in]{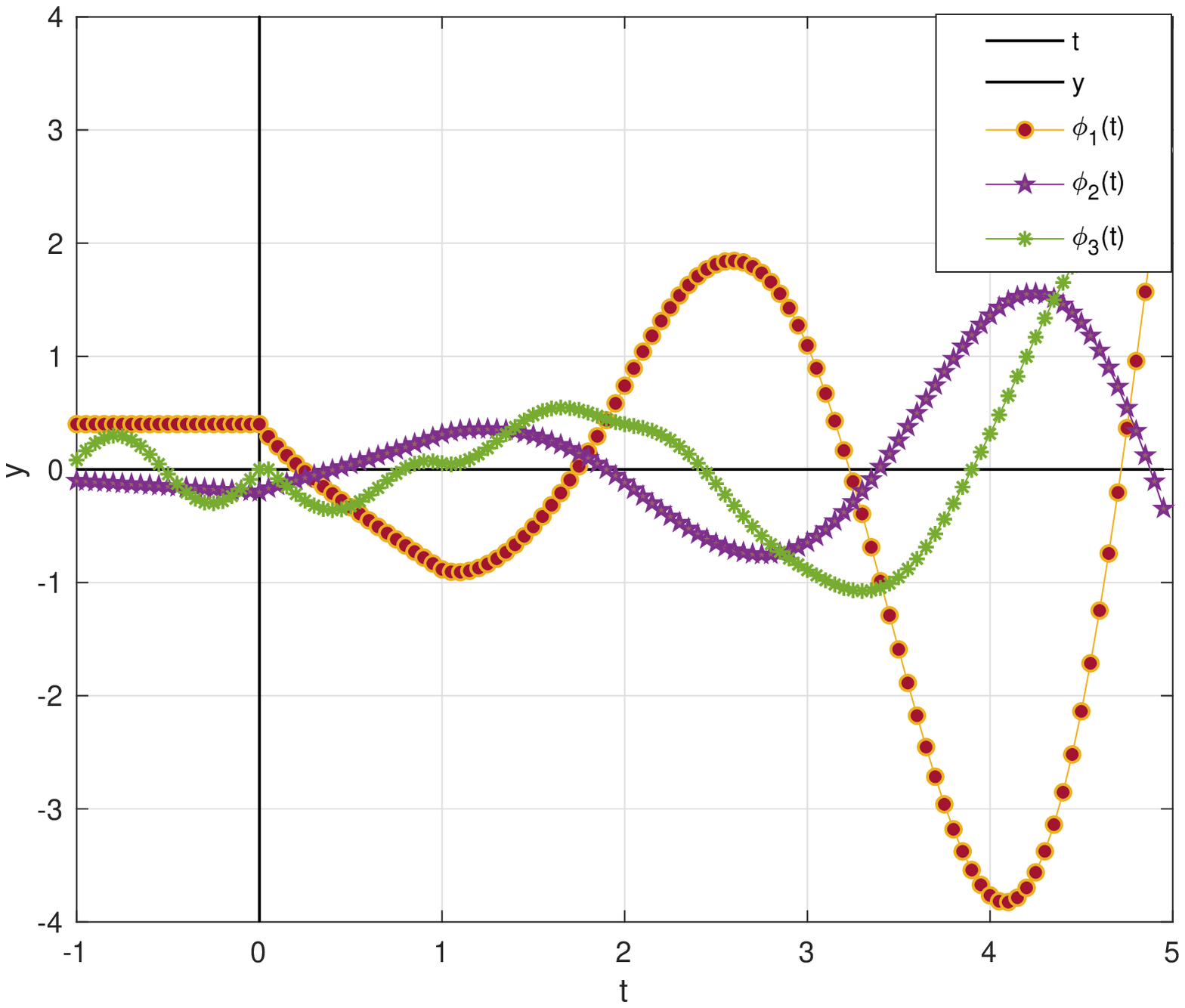}
\end{minipage}
}                    
\caption{ Numerical solutions with different parameter $(a, b)$ for $\tau=1, h=0.05$ and $\alpha=0.8$.}
\label{exam}
\end{figure}

\section*{Acknowledgements} 
The work of L. Li was partially sponsored by NSFC 11901389, 12031013, Shanghai Sailing Program 19YF1421300. 
The work of D. L. Wang was partially sponsored by NSFC 11871057, 11931013.

\appendix

\section{Proof of Lemma \ref{lmm:locuslines}}\label{app:lines}

\begin{proof} [Proof of Lemma \ref{lmm:locuslines}]

For (i), we need to show that
\[
2^{\alpha}<2\frac{[(1-\alpha)\pi]^{\alpha}\sin(\alpha\pi/2)}{\sin(\alpha\pi)}
=\frac{[(1-\alpha)\pi]^{\alpha}}{\sin(\pi(1-\alpha)/2)},
\]
by Lemma \ref{lmm:continuouscurve}.
This is equivalent to
\[
\sin\left(\frac{(1-\alpha)\pi}{2}\right)<\left[\frac{(1-\alpha)\pi}{2}\right]^{\alpha}.
\]
When $\frac{(1-\alpha)\pi}{2}>1$, the inequality clearly holds. If $\frac{(1-\alpha)\pi}{2}\in [0, 1]$, then
\[
\sin\left(\frac{(1-\alpha)\pi}{2}\right)<\frac{(1-\alpha)\pi}{2} \le \left[\frac{(1-\alpha)\pi}{2}\right]^{\alpha}\quad \text{as}\quad \alpha\in (0, 1).
\]

(ii). By Lemma \ref{lmm:continuouscurve}, we only need to show that the straight lines are below the boundary,
\begin{equation}\label{eq:nontaustable}
2^{\alpha}k^{\alpha}\ge \frac{[(1-\alpha)\pi]^{\alpha}}{\sin(\pi(1-\alpha)/2)},~~k\ge 3.
\end{equation}
It suffices to show that
\begin{equation}\label{eq:mid1}
\begin{split}
\sin\left(\frac{\pi(1-\alpha)}{2}\right)- \left[ \frac{(1-\alpha)\pi}{2k} \right]^{\alpha}\ge 0. 
\end{split}
\end{equation}
It is worthing to note that the function $h_1(x)= \frac{\sin x}{x}$ is decreasing with $\frac{2}{\pi}\leq h_1(x)\leq 1$ and it is concave on $[0, \pi/2]$ (note that the sign of the second order derivative is determined by the sign of 
$(1-\frac{x^2}{2})\frac{\sin x}{x}-\cos x$, which is negative for $x\in [0, \pi/2]$).

For $k\ge 5$, since $\sin(\pi(1-\alpha)/2)\ge \frac{\pi}{2}(1-\alpha)\frac{2}{\pi}$,  it suffices to show that
\[
\frac{\pi}{2}(1-\alpha)\frac{2}{\pi}- \left[ \frac{(1-\alpha)\pi}{2k}\right]^{\alpha}\ge 0.
\] 
The latter is equivalent to show that (setting $\beta=1-\alpha$)
$h_2(\beta)=\beta\log\beta-(1-\beta)\log \left( \frac{\pi}{2k} \right)\ge 0.$
%\]
The derivative $h'_2(\beta)$ is negative for $k\ge 5$ and the value at $\beta=1$ is zero. Hence, the inequality \eqref{eq:mid1} holds for $k\geq 5$.

If $k=3$, consider directly (setting $\beta=1-\alpha$) that 
$
g(\beta):=\log \left( \sin(\frac{\pi}{2}\beta) \right)+(\beta-1)\log \left( \frac{\beta \pi}{6}\right).
$
As $\beta\to 1^-$,  $g(\beta)$ tends to $0$. Hence, to show $g(\beta)>0$ for $\beta\in(0,1)$, we only need to show that the first order derivative 
\[
g'(\beta)=\frac{\pi}{2}\frac{\cos(\pi \beta/2)}{\sin(\pi\beta/2)}+\log \left(\frac{\pi}{6}\right)+\log(\beta)+(\beta-1)/\beta
\]
is always negative on $\beta\in (0, 1)$. 

It is easy to see that $g'(\beta)<0$ for $\beta\in(0.8, 1)$ (considering only the first two terms).
On the other hand, we have  
$
g''(\beta)=\beta^{-1}+\beta^{-2}-(\pi/2)^2\frac{1}{\sin^2(\pi\beta/2)}.
$
This equation $g''(\beta)=0$ only has one root $\beta_*$ with $\beta\in (0, 1)$ 
and the root satisfies
$
\frac{\sin(\pi \beta_*/2)}{(\pi\beta_*/2)}=\frac{1}{\sqrt{1+\beta_*}}.
$
This can be seen from the fact $h_1(x)=\frac{\sin x}{x}$ is decreasing and concave while the function $\frac{1}{\sqrt{1+x}}$ is convex on $[0, \pi/2]$.
At the same time, we can check $g''(0.8)>0$ and $g''(1)<0$. Hence, we know that the root $\beta_*\in(0.8, 1)$ and $g''(\beta)$ is positive
on $[0, 0.8]$. Together with $g'(0.8)<0$, we find that $g'(\beta)$ is negative on $(0, 0.8]$.  Then the first derivative is always negative on $\beta\in (0, 1)$. 
Therefore, $k=3$ is also proved.
\end{proof}

\section{Proof for the properties of the $\Gamma_0$ curve}\label{app:Gamma0}

\begin{proof}[Proof of Lemma \ref{lem:decrecruv}]
Define $\phi=\theta/2\in \left( \frac{(1-\alpha )\pi}{2(1-\alpha/k)}, \frac{\pi}{2} \right)$.
Consider that
\[
h(\phi):=\frac{\lambda_k(\theta)}{2^{\alpha}k^{\alpha}}=\frac{\sin^{\alpha} \left( \frac{\phi}{k} \right)\sin\left(\phi+\frac{\alpha\pi}{2}
-\frac{\alpha\phi}{k}\right)}{\sin(\phi)}.
\]
To show this function is decreasing, it is sufficient to prove that
\begin{equation}\label{eq:mid2}
 \frac{d}{d\phi}\ln h(\phi)=\frac{\alpha}{k}\frac{\cos(\phi/k)}{\sin(\phi/k)}+\frac{(1-\alpha/k)\cos \left( \phi+\frac{\alpha\pi}{2}-\frac{\alpha\phi}{k}\right)} {\sin \left( \phi+\frac{\alpha\pi}{2}
-\frac{\alpha\phi}{k} \right)}-\frac{\cos\phi}{\sin\phi} \le 0.
\end{equation}
Separate the middle term on the left side of the above equation and use the trigonometric function formula, the above equation can be equivalent to
 $
 \frac{\alpha}{k}\frac{\sin(\phi+\alpha \pi/2-\alpha\phi/k-\phi/k)}{\sin(\phi/k)\sin(\phi+\alpha\pi/2-\alpha\phi/k)}
-\frac{\sin(\alpha \pi/2-\alpha\phi/k)}{\sin(\phi)\sin(\phi+\alpha\pi/2-\alpha\phi/k)}\le 0.
 $
 Noting that $\sin(\phi+\alpha\pi/2-\alpha\phi/k)>0$ under the assumption $\phi\in \left( \frac{(1-\alpha )\pi}{2(1-\alpha/k)}, \frac{\pi}{2} \right)$.
Hence, we need 
\begin{gather}\label{eq:mid3}
\frac{\alpha \sin(\phi+\alpha \pi/2-\alpha\phi/k-\phi/k)}{\sin(\alpha\pi/2
-\alpha\phi/k)}\le \frac{k\sin(\phi/k)}{\sin\phi}.
\end{gather}

We are going to prove that \eqref{eq:mid3} is true for $k\ge 2$. Since
\begin{gather}\label{eq:mid4}
\frac{\alpha \sin(\phi+\alpha \pi/2-\alpha\phi/k-\phi/k)}{\sin(\alpha\pi/2-\alpha\phi/k)}\le 
\frac{\alpha}{\sin(\alpha\pi/2-\alpha \phi/k)}\leq \frac{1}{\cos(\phi/k)},
\end{gather}
where the second inequality is due to the fact $\frac{\alpha}{\sin(\alpha\pi/2-\alpha \phi/k)}$ is increasing function for $\alpha\in (0, 1)$.
By the convexity of $\sin(x)$ on $x\in(0, \pi/2)$, we know that 
$
\sin(\phi)\le \frac{k}{2}\sin(2\phi/k  )
$
 is true for $k\ge 2$, which implies that 
$
\frac{1}{\cos(\phi/k)}\le \frac{k\sin(\phi/k)}{\sin(\phi)}.
$
This together with \eqref{eq:mid4} show that the inequality  \eqref{eq:mid3}
is true for $k\ge 2$. This completes the first part of the proof.

Using the above fact, to show that the line is below $\Gamma_0$, we only have to show that $(a_0,-a_0)$ is above the line, where $a_0$ is defined in Lemma \ref{lem:points}. In other words, we need
\begin{gather}\label{eq:startpoint}
2^{\alpha}k^{\alpha}\ge 2^{\alpha}k^{\alpha}\frac{\cos^{\alpha}(\phi_1)}{\cos(\alpha\phi_1)}, \quad \text{where}\quad \phi_1=\frac{\pi(k-1)}{2(k-\alpha)}.
\end{gather}
Since the right hand side is decreasing in $\phi$, the largest value is achieved at $\phi_1=0$, which is $2^{\alpha}k^{\alpha}$. Hence, the inequality 
\eqref{eq:startpoint} is true.
\end{proof}

\bibliographystyle{alpha}
\bibliography{fracDDE}

\end{document}